\newtheorem{theorem}{Theorem}
\newtheorem{lemma}[theorem]{Lemma}
\newtheorem{corollary}[theorem]{Corollary}
\newtheorem{proposition}[theorem]{Proposition}
\newtheorem{remark}[theorem]{Remark}
\newtheorem{conjecture}[theorem]{Conjecture}
\newtheorem{definition}[theorem]{Definition}
\newtheorem{theoremletter}{Theorem}
\newenvironment{acknowledgement}{\noindent\textbf{Acknowledgments}}{}
\newtheoremstyle{tttheorem}
{}                
{}                
{\slshape}        
{}                
{\bfseries}       
{'}               
{ }               
{}                
\theoremstyle{tttheorem}
\newcommand{\ud}{\mathrm{d}}
\newcommand*{\avint}{\mathop{\ooalign{$\int$\cr$-$}}}
\title[Asymptotics for solutions to subcritical sixth order equations]{Asymptotics for positive singular solutions to subcritical sixth order equations} 
\thanks{This research is partially supported by S\~ao Paulo Research Foundation (FAPESP) \#2020/07566-3 and \#2021/15139-0 and Natural Sciences and Engineering Research Council of Canada (NSERC)}
\author[J.H. Andrade]{Jo\~{a}o Henrique Andrade}
\author[J. Wei]{Juncheng Wei}
\address[J.H. Andrade]{
	Department of Mathematics,
	University of British Columbia
	\newline\indent 
	V6T 1Z2, Vancouver-BC, Canada
	\newline\indent
	and
	\newline\indent 
	Institute of Mathematics and Statistics,
	University of S\~ao Paulo
	\newline\indent 
	05508-090, S\~ao Paulo-SP, Brazil
}
\email{\href{mailto:andradejh@math.ubc.ca}{andradejh@math.ubc.ca}}
\email{\href{mailto:andradejh@ime.usp.br}{andradejh@ime.usp.br}}
\address[J. Wei]{
	Department of Mathematics,
	University of British Columbia
	\newline\indent 
	V6T 1Z2, Vancouver-BC, Canada}
\email{\href{mailto:jcwei@math.ubc.ca}{jcwei@math.ubc.ca}}
\subjclass[2020]{35J60, 35B09, 35J30, 35B40}
\keywords{Tri-Laplacian, Lower critical exponent, Sixth order equation, Local asymptotic behavior, Emden--Fowler solutions}
\begin{document}
	
	\begin{abstract}
		We classify the local asymptotic behavior of positive singular solutions to a class of subcritical sixth order equations on the punctured ball.
		Initially, using a version of the integral moving spheres technique, we prove that solutions are asymptotically radially symmetric solutions with respect to the origin.
		We divide our approach into some cases concerning the growth of nonlinearity.
		In general, we use an Emden--Fowler change of variables to translate our problem to a cylinder.
		In the lower critical regime, this is not enough, thus, we need to introduce a new notion of change of variables.
		The difficulty is that the cylindrical PDE in this coordinate system is nonautonomous.
		Nonetheless, we define an associated nonautonomous Pohozaev functional, which can be proved to be asymptotically monotone.
		In addition, we show {\it a priori} estimates for these two functionals, from which we extract compactness properties.
		With this ingredients, we can perform an asymptotic analysis technique to prove our main result.
	\end{abstract}
	
	\maketitle
	
	\bigskip
	\begin{center}
		\footnotesize
		\tableofcontents
	\end{center}
	
	\section{Introduction}
	We study (classical) positive singular  solutions $u\in C^{6}(\mathbb{R}^n\setminus\{0\})$ with $n\geqslant7$ (which will always be assumed so forth) to the following family of subcritical sixth order PDEs 
	\begin{flalign}\tag{$\mathcal P_{6,p,R}$}\label{ourPDE}
		(-\Delta)^3u=f_p(u) \quad {\rm in} \quad B^*_R.
	\end{flalign}
	Here $B_R\setminus\{0\}\subset\mathbb R^n$ is the punctured ball of radius $R>0$, $\Delta^3=\Delta\circ\Delta\circ\Delta$ is the tri-Laplacian, and the nonlinearity $f\in C^{1}(B_R)$ is given by
	\begin{equation*}
		f_p(u):=|u|^{p-1}u \quad {\rm with} \quad p\in(1,2_{\#}]\cup(2_{\#},2^{\#}-1),
	\end{equation*}
	where $2_{\#}:=\frac{n}{n-6}$ and $2^{\#}:=\frac{2n}{n-6}$ are, respectively, the lower and upper critical exponents in the sense of the compact Sobolev embedding of $H^{3}(\mathbb R^n)$.
	
	We say that a positive solution $u\in C^{6}(\mathbb{R}^n\setminus\{0\})$ has a removable singularity at the origin if $\lim_{x\rightarrow0}u(x)<+\infty$, that is, it can be continuously extended to the origin; otherwise, we say that it is a non-removable singularity.
	These are called non-singular and singular solutions, respectively. 
	
	Let us mention that the S. Luo et al. \cite{MR4257807,10.1093/imrn/rnab212} studied homogeneous solutions to \eqref{ourPDE} in the blow-limit case $R=+\infty$, that is,  
	\begin{flalign}\tag{$\mathcal P_{6,p,\infty}$}\label{ourlimitPDE}
		(-\Delta)^3u=f_p(u) \quad {\rm in} \quad \mathbb R^n\setminus\{0\}.
	\end{flalign}
	On this subject, they proved the following classification result
	\begin{theoremletter}\label{thm:luo-wei}
		Let $u\in C^{6}(\mathbb{R}^n\setminus\{0\})$ be a positive singular  solution to \eqref{ourlimitPDE} with $p\in(1,2^{\#}-1)$. 
		Assume that $u$ is homogeneous of degree $-\gamma_p$, where $\gamma_{p}:=\frac{6}{p-1}$.
		\begin{itemize}
			\item[{\rm (a)}] If $p\in(1,2_{\#}]$, then $u\equiv0$.
			\item[{\rm (b)}] If $p=(2_{\#},2^{\#}-1)$, then 
			\begin{equation}\label{subcriticalblowupsolutions}
				u(x)={K}_0(n,p)^{\frac{1}{p-1}}|x|^{-\gamma_p},
			\end{equation}
			where
			\begin{equation*}
				{K}_0(n,p)=\gamma_p\left(\gamma_p+2\right)\left(\gamma_p+4\right)\left(n-2-\gamma_p\right)\left(n-4-\gamma_p\right)\left(n-6-\gamma_p\right).
			\end{equation*}
		\end{itemize}
	\end{theoremletter}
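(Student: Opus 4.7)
The plan is to exploit the assumed homogeneity in order to collapse the sixth order PDE on $\mathbb{R}^{n}\setminus\{0\}$ to an autonomous elliptic equation for a positive function on the compact manifold $S^{n-1}$, and then to analyze the latter. Concretely, writing $r:=|x|$, $\theta:=x/|x|$ and $u(x)=r^{-\gamma_{p}}w(\theta)$ for some positive $w\in C^{6}(S^{n-1})$, and iterating the identity $\Delta_{\mathbb{R}^{n}}=\partial_{r}^{2}+\tfrac{n-1}{r}\partial_{r}+\tfrac{1}{r^{2}}\Delta_{S^{n-1}}$ three times, a direct computation yields
\begin{equation*}
(-\Delta)^{3}\bigl(r^{-\gamma_{p}}w(\theta)\bigr)=r^{-\gamma_{p}-6}\,Lw,\qquad L:=\prod_{k=0}^{2}\bigl(-\Delta_{S^{n-1}}+\alpha_{k}\bigr),
\end{equation*}
with $\alpha_{k}:=(\gamma_{p}+2k)(n-2-2k-\gamma_{p})$. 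Since $\gamma_{p}+6=p\gamma_{p}$, \eqref{ourlimitPDE} is thereby equivalent to the semilinear problem $Lw=w^{p}$ on $S^{n-1}$. The operator $L$ is a polynomial in $-\Delta_{S^{n-1}}$, hence self-adjoint, and acts on constants by the scalar $\prod_{k=0}^{2}\alpha_{k}=K_{0}(n,p)$.

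For part {\rm (a)} I would note that the condition $p\in(1,2_{\#}]$ forces $\gamma_{p}\geqslant n-6$, so the rightmost factor $\alpha_{2}=(\gamma_{p}+4)(n-6-\gamma_{p})$ is non-positive and hence $K_{0}(n,p)\leqslant 0$. Integrating the reduced equation on $S^{n-1}$ and using the self-adjointness of $L$ together with $L\mathbf{1}=K_{0}(n,p)\mathbf{1}$,
\begin{equation*}
\int_{S^{n-1}}w^{p}\,\mathrm{d}\theta=\int_{S^{n-1}}Lw\,\mathrm{d}\theta=K_{0}(n,p)\int_{S^{n-1}}w\,\mathrm{d}\theta.
\end{equation*}
Since $w>0$ the left-hand side is strictly positive while the right-hand side is non-positive, a contradiction. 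Consequently no positive homogeneous singular solution can exist in this regime, which is exactly the content of the stated conclusion $u\equiv 0$.

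For part {\rm (b)}, the range $p\in(2_{\#},2^{\#}-1)$ gives $(n-6)/2<\gamma_{p}<n-6$, so all three $\alpha_{k}$ are strictly positive and in particular $K_{0}(n,p)>0$. The key task is now to show that the positive solution $w$ of $Lw=w^{p}$ on $S^{n-1}$ must be constant. I would achieve this by rewriting \eqref{ourlimitPDE} as the cooperative second order system $-\Delta u_{1}=u_{2}$, $-\Delta u_{2}=u_{3}$, $-\Delta u_{3}=u_{1}^{p}$, passing to the equivalent Riesz-potential integral formulation $u=I_{6}\ast u^{p}$, and executing the integral moving spheres technique around the origin (in the spirit of Chen--Li--Ou, adapted to the sixth order setting as promised by the abstract of the present paper). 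This forces $u$ to be radially symmetric about $0$, hence $w$ is a constant $c>0$; plugging $u=c\,r^{-\gamma_{p}}$ into \eqref{ourlimitPDE} and using the coefficient computation above yields $cK_{0}(n,p)=c^{p}$, so $c=K_{0}(n,p)^{1/(p-1)}$ as in \eqref{subcriticalblowupsolutions}.

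The principal obstacle is the radial symmetry step in part {\rm (b)}: for a sixth order equation on the punctured Euclidean space the differential moving planes method cannot be applied directly, since $(-\Delta)^{3}$ alone does not satisfy a comparison principle. One must first either decouple the equation into a cooperative system (and check the sign hypotheses needed to run moving planes on the system) or pass to its integral form, after which a moving spheres argument tailored to the isolated singularity at the origin can be carried out. Once radial symmetry is established, the remainder of the proof reduces to the purely algebraic coefficient computation recorded above.
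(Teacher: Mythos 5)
The paper does not actually prove Theorem~\ref{thm:luo-wei}: it is quoted from Luo--Wei--Zou \cite{MR4257807,10.1093/imrn/rnab212}, whose results carry a stability (or super-polyharmonicity) hypothesis, so your reconstruction has to stand on its own. Your reduction to $Lw=w^{p}$ on $\mathbb{S}^{n-1}$ with $L=\prod_{k=0}^{2}(-\Delta_{\theta}+\alpha_{k})$, $\alpha_{k}=(\gamma_{p}+2k)(n-2-2k-\gamma_{p})$, and the identity $\int Lw=K_{0}(n,p)\int w$ are correct. The genuine gap is in part (a): from $\alpha_{2}\leqslant 0$ you cannot conclude $K_{0}=\alpha_{0}\alpha_{1}\alpha_{2}\leqslant 0$, because $\alpha_{0}$ and $\alpha_{1}$ also change sign on $(1,2_{\#}]$. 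Concretely, for $\gamma_{p}\in(n-4,n-2)$, i.e. $p\in\bigl(\tfrac{n+4}{n-2},\tfrac{n+2}{n-4}\bigr)\subset(1,2_{\#})$, one has $n-2-\gamma_{p}>0$ while $n-4-\gamma_{p}$ and $n-6-\gamma_{p}$ are both negative, so $K_{0}(n,p)>0$; then $w\equiv K_{0}(n,p)^{1/(p-1)}$ solves $Lw=w^{p}$ and $u=K_{0}(n,p)^{1/(p-1)}|x|^{-\gamma_{p}}$ is an explicit positive homogeneous singular solution (check in $\mathbb{R}^{7}$ with $\gamma_{p}=4$, $p=5/2$: $(-\Delta)^{3}|x|^{-4}=576\,|x|^{-10}=u^{5/2}/c^{3/2}$). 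So your integration argument cannot close there, and in fact part (a) as literally stated fails without an additional hypothesis such as $-\Delta u\geqslant 0$ and $\Delta^{2}u\geqslant 0$ (which exclude $\gamma_{p}>n-4$) or the stability assumed in the cited source; your argument is complete only on the subrange $\gamma_{p}\in[n-6,n-4]\cup[n-2,+\infty)$.

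For part (b) the algebra is right and the strategy --- super-polyharmonic property, integral representation, moving spheres to force $w$ constant --- is essentially the route the paper itself takes elsewhere (Proposition~\ref{prop:asymptoticsymmetry} combined with Lemma~\ref{lm:blowupclassification}, with the sign conditions supplied by \cite{MR4438901}). But this is precisely the hard step and you leave it as a black box: the super-polyharmonicity of an arbitrary positive entire solution is itself a nontrivial theorem rather than a hypothesis you may assume, and the moving spheres argument must be adapted to a solution that is singular at the origin (spheres centered near, but not at, the origin, then sent to the origin, as in Section~\ref{sec:asympradialsymmetry}). As written, part (b) is a plausible outline rather than a proof.
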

	
	In the light of the last theorem, it makes sense to divide our analysis into three cases, namely, the Serrin--Lions case $p\in(1,2_{\#})$, Aviles case $p=2_{\#}$, and the Gidas--Spruck case $p\in(2_{\#},2^{\#}-1)$.
	Our main result in this manuscript classifies the local behavior of positive solutions to \eqref{ourPDE} in these situations.
	\begin{theorem}\label{maintheorem}
		Let $u\in C^6(\mathbb R^n\{0\})$ be a positive singular solution to \eqref{ourPDE}.
		Assume that $-\Delta u\geqslant 0$ and $\Delta^2 u\geqslant 0$.
		Then, it follows
		\begin{equation*}
			u(x)=(1+\mathcal{O}(|x|))\overline{u}(x) \quad {\rm as} \quad x\rightarrow0,
		\end{equation*}
		where $\overline{u}(r)=\avint_{\partial B_{R}}|u(r \theta)|\ud\theta$ is the spherical average of $u$. 
		Moreover, 
		\begin{itemize}
			\item[{\rm (a)}] if $p\in(1,2_{\#})$, then
			\begin{equation*}
				u(x)\simeq|x|^{6-n} \quad {\rm as} \quad x\rightarrow0;
			\end{equation*}
			\item[{\rm (b)}] if $p=2_{\#}$, then 
			\begin{equation*}
				u(x)=(1+ \mathrm{o}(1))\widehat{K}_{0}(n)^{\frac{n-6}{6}}|x|^{6-n}(\ln|x|)^{\frac{6-n}{6}} \quad {\rm as} \quad x\rightarrow0,
			\end{equation*}
			where
			\begin{equation}\label{serrinasymptoticsconstant}
				\widehat{K}_{0}(n)=\frac{4}{3}(n-2)(n-4)(n-6)^2;
			\end{equation}
			\item[{\rm (c)}] if $p\in(2_{\#},2^{\#}-1)$, then 
			\begin{equation*}
				u(x)=(1+\mathrm{o}(1))K_0(n,p)^{\frac{1}{p-1}}|x|^{-\frac{6}{p-1}} \quad {\rm as} \quad x\rightarrow0.
			\end{equation*}
		\end{itemize}
	\end{theorem}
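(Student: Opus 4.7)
The plan is to first prove the asymptotic radial symmetry assertion $u(x)=(1+\mathcal{O}(|x|))\overline{u}(|x|)$, which reduces the problem to the behavior of the radial profile $\overline{u}$, and then to analyze each of the three exponent regimes via a tailored Emden--Fowler type change of variables. For the symmetrization, I would use the sign conditions $-\Delta u\geqslant 0$ and $\Delta^2 u\geqslant 0$ together with the third order Green representation on $B_R^*$ to write $u$ as an iterated Riesz potential of $f_p(u)$, and then run the integral moving spheres argument announced in the abstract: comparing $u$ with its Kelvin transform at small scales yields the linear-in-$|x|$ control on the angular oscillation. After this step, the problem becomes a study of the sixth order radial ODE satisfied by $\overline{u}$.

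For the Gidas--Spruck range (c), one has $\gamma_p<n-6$ and the Emden--Fowler substitution $v(t)=e^{-\gamma_p t}\overline{u}(e^{-t})$ with $t=-\ln|x|$ produces an autonomous sixth order cylindrical ODE whose associated Pohozaev quantity is conserved. The strategy is to establish two-sided positive bounds for $v$ when $t$ is large, extract an $\omega$-limit as $t\to\infty$, and identify it via Theorem~\ref{thm:luo-wei}(b) as the constant $K_0(n,p)^{1/(p-1)}$; linearization around this equilibrium then delivers the quantitative $(1+\mathrm{o}(1))$ rate. For the Serrin--Lions range (a), the nonlinearity is mild enough that the singularity is driven by the tri-Laplacian fundamental solution $|x|^{6-n}$: an upper bound $\overline{u}\leqslant C|x|^{6-n}$ follows by a bootstrap using Riesz potential estimates starting from the super-polyharmonic representation, while the matching lower bound comes from the observation that $\overline{u}\ll|x|^{6-n}$ would force the singularity to be removable, contradicting the hypothesis.

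The main obstacle is case (b), $p=2_{\#}$, where $\gamma_p=n-6$ and the classical Emden--Fowler transformation cannot capture the logarithmic correction $(\ln|x|)^{(6-n)/6}$ appearing in the expected asymptotics. Here I would introduce the modified change of variables
\[
w(t)=t^{\frac{n-6}{6}}\,e^{-(n-6)t}\,\overline{u}(e^{-t}),\qquad t=-\ln|x|,
\]
chosen so that the conjectured statement reads $w(t)\to\widehat{K}_0(n)^{(n-6)/6}$ as $t\to\infty$. The resulting cylindrical ODE is nonautonomous, with coefficients of the form $(\text{constant})+\mathcal{O}(1/t)$, and its formal autonomous limit is degenerate in the sense that every positive constant is a solution, so straightforward conservation-law arguments fail. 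I would then construct a nonautonomous Pohozaev-type functional $\mathcal{H}(t;w,w',\ldots,w^{(5)})$ whose derivative along trajectories has a definite sign up to an error of size $\mathcal{O}(1/t^2)$, which is integrable in $t$; combined with a priori $L^\infty$ bounds coming from the radial ODE and the symmetrization step, this asymptotic monotonicity should force the orbit of $w$ to converge to the unique positive constant selected by the leading-order balance. The hard part will be to simultaneously quantify the approximate conservation and rule out degeneration of $w$ to $0$ or $\infty$; obtaining an effective lower bound is particularly delicate because of the absence of a nontrivial homogeneous barrier in the critical regime, and this is precisely the difficulty flagged in the abstract.
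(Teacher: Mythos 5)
Your proposal follows essentially the same route as the paper: integral moving spheres for the asymptotic radial symmetry, the autonomous Emden--Fowler/Pohozaev monotonicity for the Gidas--Spruck range, Green-representation upper bounds plus a removability contradiction for the Serrin--Lions range, and the nonautonomous change of variables $w=t^{(n-6)/6}e^{-(n-6)t}\overline{u}$ with an asymptotically monotone nonautonomous Pohozaev functional for $p=2_{\#}$. The one point you flag as open --- ruling out degeneration of $w$ to $0$ in case (b), and similarly identifying the nontrivial Pohozaev level in case (c) --- is resolved in the paper not by linearization but by removable-singularity theorems (a barrier construction for $p=2_{\#}$ and a regularity-lifting argument for $p\in(2_{\#},2^{\#}-1)$), which convert the degenerate alternative into a contradiction with the singularity being non-removable.
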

	
	When $p\in(1,2_{\#})$, we show that solutions behave like the fundamental solution, and so, the origin is a removable singularity. 
	When $p\in(2_{\#},2^{\#}-1)$, we prove that solutions to \eqref{ourPDE} behave near the isolated singularity like the homogeneous solutions to the blow-up limit equation \eqref{ourlimitPDE}, which are classified by \eqref{subcriticalblowupsolutions}.
	In the lower critical case $p=2_{\#}$, which is the so-called sixth order Serrin exponent, 
	we observe that since $K_0(n,2_{\#})=0$, it follows that \eqref{ourlimitPDE} does not have non-trivial homogeneous solutions.
	This explains why this situation has different blow-up rate near the singularity, which is given by a homogeneous term times a log-correction factor.
	
	Now let us compare our results to the ones in the fourth and second order cases. 
	
	First, we consider positive solutions $u\in C^{4}(\mathbb{R}^n\setminus\{0\})$ with $n\geqslant 5$ to the family of fourth order equations
	\begin{flalign}\tag{$\mathcal P_{4,R,p}$}\label{ourPDE4th}
		(-\Delta)^2u=f_p(u) \quad {\rm in} \quad B_R^*,
	\end{flalign}
	where $R<+\infty$, $\Delta^2=\Delta\circ\Delta$ is the bi-Laplacian, and $p\in(1,2_{**}]\cup(2_{**},2^{**}-1)$, where $2_{**}=\frac{n}{n-4}$ and $2^{**}=\frac{2n}{n-4}$.
	Notice that \eqref{ourPDE4th} is subcritical in the sense of the compact Sobolev embedding $H^{2}(\mathbb R^n)$.
	On this subject, we should mention that R. Soranzo \cite{MR1436822} for $p\in(1,2_{**})$, H. Yang \cite{MR4123335} and Z. Guo et al. \cite{MR3632218} for $p\in(2_{**},2^{**}-1)$,
	and the first-named author and J. M. do \'O \cite{arXiv:2009.02402} for $p=2_{**}$
	study qualitative properties for positive solutions to \eqref{ourPDE4th}. We have the result below
	\begin{theoremletter}
		Let $u\in C^4(\mathbb R^n\{0\})$ be a positive singular solution to \eqref{ourPDE4th}.
		Assume that $-\Delta u\geqslant 0$.
		Then, it follows
		\begin{equation*}
			u(x)=(1+\mathcal{O}(|x|))\overline{u}(x) \quad {\rm as} \quad x\rightarrow0.
		\end{equation*}
		Moreover, 
		\begin{itemize}
			\item[{\rm (a)}] if $p\in(1,2_{**})$, then
			\begin{equation*}
				u(x)\simeq|x|^{4-n} \quad {\rm as} \quad x\rightarrow0;
			\end{equation*}
			\item[{\rm (b)}] if $p=2_{**}$, then 
			\begin{equation*}
				u(x)=(1+ \mathrm{o}(1))\widehat{K}_{4,0}(n)^{\frac{n-4}{4}}|x|^{4-n}(\ln|x|)^{\frac{4-n}{4}} \quad {\rm as} \quad x\rightarrow0,
			\end{equation*}
			where
			\begin{equation*}
				\widehat{K}_{4,0}(n)=\frac{(n-2)(n-4)^2}{2};
			\end{equation*}
			\item[{\rm (c)}] if $p\in(2_{**},2^{**}-1)$, then 
			\begin{equation*}
				u(x)=(1+\mathrm{o}(1))K_{4,0}(n,p)^{\frac{1}{p-1}}|x|^{-\frac{4}{p-1}} \quad {\rm as} \quad x\rightarrow0.
			\end{equation*}
			where
			\begin{align*}
				K_{4,0}(n,p)=\frac{4}{p-1}\left(\frac{4}{p-1}+2\right)\left(n-2-\frac{4}{p-1}\right)\left(n-4-\frac{4}{p-1}\right).
			\end{align*}
		\end{itemize}
	\end{theoremletter}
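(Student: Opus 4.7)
The plan is to mirror the sixth order strategy outlined in the abstract, decomposed into two stages: first, establish the asymptotic radial symmetry $u(x) = (1 + \mathcal O(|x|))\bar u(x)$ via an integral moving spheres argument, and second, study the radial profile via an Emden--Fowler type change of variables tailored to each range of $p$.

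For the radial symmetry, since $(-\Delta)^2$ does not satisfy a strong maximum principle, the classical pointwise moving spheres technique is not directly available. I would use the hypothesis $-\Delta u \geq 0$ to split the equation into the cooperative second order system $-\Delta u = v$, $-\Delta v = |u|^{p-1}u$ with $u,v > 0$, then convert it into a system of integral equations using the Green's function of $-\Delta$ on $B_R$. Applying the integral moving spheres technique of Chen--Li--Ou to this system near the origin yields $u(x) = (1 + \mathcal O(|x|))\bar u(x)$, the finite domain boundary contributions accounting for the explicit $\mathcal O(|x|)$ remainder.

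For parts (a) and (c) I would introduce the standard Emden--Fowler change of variables $w(t,\theta) := r^{\gamma_p} u(r\theta)$ with $r = e^{-t}$ and $\gamma_p = 4/(p-1)$, converting \eqref{ourPDE4th} into an autonomous fourth order equation on the half-cylinder $(-\ln R, \infty) \times S^{n-1}$. In the Gidas--Spruck range $p \in (2_{**}, 2^{**}-1)$, I would construct the cylindrical Pohozaev functional $\mathcal H(t)$, verify that $\mathcal H'(t) \leq 0$, and combine this with uniform a priori bounds to obtain a compact $\omega$-limit set whose only positive element is the constant $K_{4,0}(n,p)^{1/(p-1)}$, yielding (c). In the Serrin--Lions range $p \in (1, 2_{**})$, the nonlinearity is integrable against the fundamental solution of $\Delta^2$ near the origin, so testing the resulting integral equation against the Green kernel produces $u(x) \simeq |x|^{4-n}$ directly; this plus a lower bound by comparison with the fundamental solution establishes (a).

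The delicate part is the lower critical case (b) with $p = 2_{**}$, where $\gamma_p = n - 4$ coincides with the fundamental solution decay and $K_{4,0}(n,2_{**}) = 0$, so the autonomous cylindrical equation admits no positive equilibria and a logarithmic correction is forced. The main obstacle is to design a modified change of variables of the form $w(t,\theta) := r^{n-4}(\ln r^{-1})^{(n-4)/4} u(r\theta)$ so that the resulting cylindrical equation becomes \emph{non-autonomous} but with coefficients that are $o(1)$ perturbations of a limit equation as $t \to \infty$. One then builds a non-autonomous Pohozaev functional that is only asymptotically monotone, establishes a priori $L^\infty$ estimates on $w$, and uses them to extract compactness and identify the limit as the constant $\widehat K_{4,0}(n)^{(n-4)/4}$. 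Controlling the non-autonomous remainder terms precisely enough to preserve approximate monotonicity and exclude degenerate limits is the technical heart of the argument.
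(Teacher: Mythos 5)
Your proposal reconstructs, in the fourth order setting, essentially the strategy this paper develops for the sixth order case (the paper itself does not prove this Theorem; it quotes it from Soranzo, Yang, Guo--Wei--Zhou, and Andrade--do \'O, but its Sections 3--7 carry out the identical program one order up): integral moving spheres for the asymptotic symmetry, autonomous Emden--Fowler plus a monotone Pohozaev functional in the Gidas--Spruck range, the Green representation in the Serrin--Lions range, and the nonautonomous change of variables $w=r^{n-4}(\ln r^{-1})^{(n-4)/4}u$ with an asymptotically monotone Pohozaev functional at the Serrin exponent. One genuine difference of route: for the symmetry step you split $(-\Delta)^2u=u^p$ into the cooperative system $-\Delta u=v$, $-\Delta v=u^p$ and run moving spheres on the second order integral system, whereas the paper works directly with the single kernel $|x-y|^{6-n}$ (here $|x-y|^{4-n}$) via the Green identity for the full operator; both are viable, and your version is closer to Ghergu--Kim--Shahgholian, but note that your superharmonicity hypothesis $-\Delta u\geqslant0$ is exactly what makes either representation legitimate, so nothing is lost.

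The one concrete gap is in how you close cases (b) and (c). The Pohozaev-plus-compactness argument does not by itself identify the limit as the nonzero constant: the limiting Pohozaev level lies in a two-point set $\{0,-\ell^*\}$, and the level $0$ corresponds to the alternative $u(x)=\mathrm{o}(|x|^{-\gamma_p})$ (resp. $u=\mathrm{o}(|x|^{4-n}(\ln|x|)^{(4-n)/4})$). Saying the $\omega$-limit set's ``only positive element'' is the constant ignores that the trivial limit is also admissible. To obtain the stated asymptotics one must prove a removable singularity theorem showing that the level-$0$ alternative forces $u$ to extend smoothly across the origin, contradicting the hypothesis that $u$ is singular: in the paper's scheme this is the regularity-lifting argument (Lemma~\ref{lm:removablesingularitygidasspruck}) for the Gidas--Spruck range and the barrier construction (Lemma~\ref{lm:removablesingularityaviles}) at the Serrin exponent; the same dichotomy, in the form of Lemma~\ref{lm:removabilityserrin}, is also what produces the lower bound in case (a). You allude to ``excluding degenerate limits'' in case (b) but give no mechanism, and you omit the issue entirely in case (c); without these removability lemmas the argument only yields the dichotomy, not the conclusion.
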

	
	Second, we consider positive singular solutions $u\in C^{2}(\mathbb{R}^n\setminus\{0\})$ with $n\geqslant 3$ the second order equation below
	\begin{flalign}\tag{$\mathcal P_{2,R,p}$}\label{ourPDE2th}
		-\Delta u=f_p(u) \quad {\rm in} \quad B_R^*,
	\end{flalign}
	where $R<+\infty$, $\Delta$ is the Laplacian, and $p\in(1,2_{*}]\cup(2_{*},2^{*}-1)$, where $2_{*}=\frac{n}{n-2}$ and $2^{*}=\frac{2n}{n-2}$
	Notice that \eqref{ourPDE2th} is subcritical in the sense of the compact Sobolev embedding of $H^{1}(\mathbb R^n)$.
	All the aforementioned classification results were inspired by the classical theorems of J. Serrin \cite{MR0170096} and P.-L. Lions \cite{MR605060}, B. Gidas and J. Spruck \cite{MR615628}, and P. Aviles \cite{MR875297} on the study of positive singular solutions to the second order semi-linear PDE \eqref{ourPDE2th}
	\begin{theoremletter}
		Let $u\in C^2(\mathbb R^n\{0\})$ be a positive singular solution to \eqref{ourPDE2th}.
		Then, it follows
		\begin{equation*}
			u(x)=(1+\mathcal{O}(|x|))\overline{u}(x) \quad {\rm as} \quad x\rightarrow0,
		\end{equation*}
		Moreover, 
		\begin{itemize}
			\item[{\rm (a)}] if $p\in(1,2_{*})$, then
			\begin{equation*}
				u(x)\simeq|x|^{2-n} \quad {\rm as} \quad x\rightarrow0;
			\end{equation*}
			\item[{\rm (b)}] if $p=2_{*}$, then 
			\begin{equation*}
				u(x)=(1+ \mathrm{o}(1))\widehat{K}_{2,0}(n)^{\frac{n-2}{2}}|x|^{2-n}(\ln|x|)^{\frac{2-n}{2}} \quad {\rm as} \quad x\rightarrow0,
			\end{equation*}
			where
			\begin{equation*}
				\widehat{K}_{2,0}(n)=\frac{(n-2)^2}{2};
			\end{equation*}
			\item[{\rm (c)}] if $p\in(2_{*},2^{*}-1)$, then 
			\begin{equation*}
				u(x)=(1+\mathrm{o}(1))K_{2,0}(n,p)^{\frac{1}{p-1}}|x|^{-\frac{2}{p-1}} \quad {\rm as} \quad x\rightarrow0,
			\end{equation*}
			where
			\begin{align*}
				K_{2,0}(n,p)=\frac{2}{p-1}\left(n-2-\frac{2}{p-1}\right).
			\end{align*}
		\end{itemize}
	\end{theoremletter}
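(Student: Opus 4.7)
The plan is to mirror, in the simpler second-order setting, the three-step strategy that the paper announces for the sixth-order problem. First, I would establish the asymptotic radial symmetry statement $u(x)=(1+\mathcal{O}(|x|))\overline{u}(x)$ by running an integral version of the moving spheres (or moving planes) method of Caffarelli--Gidas--Spruck. The Kelvin inversion $u^*(x)=|x|^{2-n}u(x/|x|^2)$ converts the singularity at the origin into a singularity at infinity, and applying a sliding-sphere comparison through points close to the singularity gives the quantitative $\mathcal{O}(|x|)$ control. This reduces the further analysis to the study of the spherical average $\overline{u}$, because the nonradial remainder is absorbed into the higher-order error.

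For case (a), $p\in(1,2_{*})$, I would exploit the fact that any expected rate $|x|^{-2/(p-1)}$ is slower than $|x|^{2-n}$, so that $u^{p}\in L^{1}(B_{R})$ after a bootstrap argument starting from the local integrability of $u$. A Brezis--Lions representation then gives $u=c\,G+h$ where $G(x)=|x|^{2-n}$ is the Newtonian fundamental solution and $h$ is harmonic and bounded near $0$; non-removability forces $c>0$, hence $u(x)\simeq |x|^{2-n}$.

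For case (c), $p\in(2_{*},2^{*}-1)$, I would use the Emden--Fowler change of variables $v(t,\theta)=r^{2/(p-1)}u(r\theta)$ with $t=-\log r$, which turns the equation into the autonomous cylindrical PDE
\begin{equation*}
v_{tt}+\Delta_{\mathbb{S}^{n-1}}v-\alpha(n,p)\,v_{t}-\beta(n,p)\,v+v^{p}=0
\end{equation*}
on $\mathbb{R}\times\mathbb{S}^{n-1}$, for suitable constants $\alpha,\beta$. The standard Pohozaev-type Hamiltonian $\mathcal{H}(t)$ is dissipative along the $t$-flow and yields uniform a priori bounds on $v$; compactness then produces a limit $v_{\infty}(\theta)$ as $t\to+\infty$ which, by the asymptotic radial symmetry of the first step, is $\theta$-independent. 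By Theorem~A the only such positive equilibrium is the constant $K_{2,0}(n,p)^{1/(p-1)}$, giving the claimed rate.

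The hard part, and the reason the paper signals a new change of variables, is case (b), the Serrin exponent $p=2_{*}$. Here $K_{2,0}(n,2_{*})=0$, so the autonomous cylindrical equation admits no positive constant equilibrium and the Emden--Fowler approach cannot capture the asymptotics. The plan is to introduce a logarithmically corrected change of variables, for instance $w(t,\theta)=r^{\,n-2}(\log(1/r))^{(n-2)/2}u(r\theta)$, producing a \emph{non-autonomous} cylindrical PDE whose coefficients carry explicit $1/t$ decay. The natural Pohozaev functional associated to this equation is no longer monotone but can be shown to be \emph{asymptotically monotone}, with an error that is integrable in $t$. Deriving a priori bounds for both the energy and this non-autonomous Hamiltonian, and then extracting compactness to identify the limiting ODE profile, is the main technical obstacle; it forces the constant $\widehat{K}_{2,0}(n)=(n-2)^{2}/2$ as the unique balance between the linear log term and the critical nonlinearity, producing the claimed $(\log|x|)^{(2-n)/2}$ correction.
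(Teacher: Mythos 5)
This statement is the paper's Theorem~C, a survey of the classical second--order results of Serrin, Lions, Aviles, Gidas--Spruck and Caffarelli--Gidas--Spruck; the paper gives no proof of it, only citations, so there is nothing internal to compare against. Your outline is the standard route to these results and is the exact second--order shadow of the strategy the paper executes for the sixth--order problem (moving spheres for asymptotic symmetry, integral representation for the Serrin--Lions range, autonomous Emden--Fowler plus a monotone Pohozaev functional for the Gidas--Spruck range, and a log-corrected nonautonomous change of variables with an asymptotically monotone functional at the Serrin exponent). So the approach is sound; two steps, however, need repair.

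First, in case (a) your justification of $u^p\in L^1(B_R)$ rests on a false premise: for $p<2_*=\tfrac{n}{n-2}$ one has $\tfrac{2}{p-1}>n-2$, so $|x|^{-2/(p-1)}$ is \emph{more} singular than the fundamental solution, not less, and raising that bound to the power $p$ does not give integrability (indeed $p\cdot\tfrac{2}{p-1}\to\infty$ as $p\to1^+$). The correct route is either the fact that a positive superharmonic function lies in $L^q_{\rm loc}$ for every $q<\tfrac{n}{n-2}$ (so $u^p\in L^{q/p}$ with $q/p>1$ precisely because $p<2_*$), or, as in the paper's sixth--order Lemma~\ref{lm:upperestimateserrin}, a Harnack-based improvement $u=\mathrm{o}(|x|^{-2/(p-1)})$ followed by a choice of intermediate exponent $q\in(n-2,\tfrac{2}{p-1})$ in the Green representation. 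Second, in cases (b) and (c) the Pohozaev/Hamiltonian argument only pins the limit of $v$ (resp.\ $w$) down to a two-point set containing $0$; you assert that the limit is the nontrivial constant, but excluding the trivial equilibrium requires a separate removable-singularity theorem (if $v\to0$, i.e.\ $u=\mathrm{o}(|x|^{-2/(p-1)})$, then the origin is removable, contradicting the hypothesis). This is exactly the role of Lemmas~\ref{lm:removablesingularitygidasspruck} and~\ref{lm:removablesingularityaviles} in the sixth--order argument, and it is absent from your sketch; without it the dichotomy does not close.
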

	This type of asymptotic analysis extends to a rich class of strongly coupled second order systems as studied in \cite{MR4085120,MR4002167}.
	
	The main difference between the asymptotic analysis for the critical and subcritical regimes occurs because of the change in the monotonicity properties of the Pohozaev functional, which in classifies the type of stability for singular solutions to \eqref{ourPDE} around a blow-up (shrink-down) limit solution. 
	This method is inspired by Fleming's tangent cone analysis for minimal hypersurfaces \cite{MR157263,MR3190428}.
	In the critical case, it can be showed that the Pohozaev functional becomes constant, that is, blow-up limit solutions are stable.
	In contrast, in the subcritical case, they are asymptotically stable. 
	This discrepancy is caused by the sign-changing behavior of the coefficients of the tri-Laplacian in Emden--Fowler coordinates (or logarithm cylindrical coordinates), which are suitable for this problem (see Remark~\ref{rmk:signoncoefficients}). 
	
	The proof of Theorem~\ref{maintheorem} is divided into two parts.
	First, we prove the asymptotic symmetry of singular solutions to \eqref{ourPDE} in the punctured ball. 
	Second, we use some ODE analysis and the monotonicity properties of the Pohozaev functional to study the asymptotic behavior for solutions on the cylinder.
	The strategy strongly relies on the growth of the subcritical nonlinearity, which, in the lower critical situation $p=2_{\#}$, turns out to be far different from the other regime.
	We should emphasize that the superharmonicity conditions are not necessary in the range $p\in(2_{\#},2^{\#}-1)$, which is not the case otherwise.
	Third, we define two homological-type invariants that satisfy suitable monotonicity properties. This can be used to study the local asymptotic behavior of positive singular solutions near the isolated singularity.
	We need to subdivide our approach with respect to the growth of the nonlinearity into three cases; namely, Serrin--Lions case, Aviles case, and Gidas--Spruck case.

	We remark that in a companion paper the same authors study the asymptotics for positive singular solutions in the other limit situation $p=2^{\#}-1$.
	Indeed, together with \cite{MR1679783} and Theorems~\ref{maintheorem} and \ref{thm:luo-wei}, this would provide a holistic picture of the qualitative behavior of solutions to \eqref{ourPDE} in the broader range $p\in(1,2^{\#}-1]$.
	These results are inspired by the classical literature for semi-linear second order equations due to J. Serrin \cite{MR0170096}, P.-L. Lions \cite{MR605060}, P. Aviles \cite{MR875297}, B. Gidas and J. Spruck \cite{MR615628}, and L. A. Caffarelli et al. \cite{MR982351} with an improvement given by N. Korevaar et al. \cite{MR1666838}.
	For more results on asymptotic analysis, we refer the interested reader to  \cite{arxiv:1901.01678}.

	We should observe that \eqref{ourPDE}, \eqref{ourPDE4th}, and \eqref{ourPDE2th} are particular cases of a more general class of equations, which we describe as follows.
	More precisely, considering (classical) positive singular  solutions $u\in C^{2m}(\mathbb{R}^n\setminus\{0\})$ with $n\geqslant 2m:=N$ to the following family of subcritical even order poly-harmonic PDEs 
	\begin{flalign}\tag{$\mathcal P_{N,p,R}$}\label{ourhigherPDE}
		(-\Delta)^{m}u=f_p(u) \quad {\rm in} \quad B^*_R.
	\end{flalign}
	Here $B_R\setminus\{0\}\subset\mathbb R^n$ is the punctured ball of radius $R>0$, $\Delta^m=\Delta\circ\dots\circ\Delta$ is the poly-Laplacian, and the nonlinearity $f_p\in C^{1}(B_R)$ is given by
	\begin{equation*}
		f_p(u):=|u|^{p-1}u \quad {\rm with} \quad p\in\left[\frac{n}{n-2m},\frac{n+2m}{n-2m}\right):=[2_{m,*},2_m^{*}),
	\end{equation*}
	where $2_{m,*}:=\frac{n}{n-2m}$ and $2^m_{*}:=\frac{2n}{n-2m}$ are, respectively, the lower and upper critical exponents with respect to the compact Sobolev embedding of $H^{m}(\mathbb R^n)$.
	In this regard, it is natural to expect a classification result for the local asymptotic behavior of singular solutions to \eqref{ourhigherPDE} near isolated singularities in the sense of Theorem~\ref{maintheorem}.
	
	Let us state as the following conjecture 
	\begin{conjecture}\label{conj:poly-laplacian}
		Let $u$ be a positive singular solution to \eqref{ourhigherPDE} with $p\in(1,2_m^*-1)$. 
		Assume that $(-\Delta u)^j\geqslant0$ for any $j=1,\dots,m-1$.
		Then, it follows
		\begin{equation*}
			u(x)=(1+\mathcal{O}(|x|))\overline{u}(x) \quad {\rm as} \quad x\rightarrow0,
		\end{equation*}
		Moreover, 
		\begin{itemize}
			\item[{\rm (a)}] if $p\in(1,2^*_{m})$, then
			\begin{equation*}
				u(x)\simeq|x|^{N-n} \quad {\rm as} \quad x\rightarrow0;
			\end{equation*}
			\item[{\rm (b)}] if $p=2_{m,*}$, then 
			\begin{equation*}
				u(x)=(1+\mathrm o(1))\widehat{K}_{0}(n)^{\frac{n-N}{N}}|x|^{N-n}(\ln|x|)^{\frac{N-n}{N}} \quad {\rm as} \quad x\rightarrow0,
			\end{equation*}
			where
			\begin{equation*}
				\widehat{K}_{N,0}(n)=\frac{2^{m-2}(m-1)!}{m}\prod_{j=0}^{m-1}\left(n-2j\right)(n-N)^2.
			\end{equation*}
			\item[{\rm (c)}] if $p\in(2_{m,*},2_m^{*}-1)$, then
			\begin{equation*}
				u(x)=(1+\mathrm{o}(1))K_0(n,p)^{\frac{1}{p-1}}|x|^{-\frac{N}{p-1}} \quad {\rm as} \quad x\rightarrow0.
			\end{equation*}
		\end{itemize}
	\end{conjecture}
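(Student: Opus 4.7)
The plan is to follow, in the same order, the three-step strategy used to prove Theorem~\ref{maintheorem}, replacing every sixth-order ingredient by its poly-harmonic counterpart. \emph{First}, I would establish the asymptotic radial symmetry $u(x)=(1+\mathcal O(|x|))\overline u(x)$. The iterated superharmonicity hypothesis $(-\Delta)^j u\geqslant 0$ for $j=1,\dots,m-1$, combined with a Boggio-type representation of the Green's function of $(-\Delta)^m$ on the punctured ball, writes $u$ as a positive integral of $f_p(u)$ against a positive kernel decaying like $|x-y|^{N-n}$. An integral moving-spheres argument around the origin, modelled on the one used here for $m=3$ (and going back to the Chen--Li--Ou scheme), then gives the symmetry statement and reduces the problem to the ODE satisfied by the spherical mean $\overline u$.

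\emph{Second}, in the two non-degenerate regimes $p\in(1,2_{m,*})$ and $p\in(2_{m,*},2_m^{*}-1)$, I would use the standard Emden--Fowler substitution
\begin{equation*}
	v(t,\theta)=r^{\gamma_p}u(r\theta),\qquad t=-\ln r,\qquad \gamma_p=\frac{N}{p-1},
\end{equation*}
to turn \eqref{ourhigherPDE} into an autonomous $2m$-th order cylindrical PDE on $\mathbb R\times\mathbb S^{n-1}$, whose equilibria are $K_0(n,p)^{1/(p-1)}$ in the Gidas--Spruck range and $0$ in the Serrin--Lions range. One then defines the cylindrical Pohozaev functional $\mathcal H_p(v)$ associated with $t$-translation invariance, computes $\frac{\ud}{\ud t}\mathcal H_p(v)$, and shows that it has a definite sign asymptotically as $t\to+\infty$. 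Combined with a priori bounds on $v$ and its derivatives up to order $2m-1$ extracted from the superharmonicity assumptions through elliptic estimates for $(-\Delta)^m$, this asymptotic monotonicity forces $v(t,\cdot)$ to converge to the correct equilibrium. Linearising at the equilibrium and invoking stable/unstable manifold theory for the resulting Hamiltonian-type system then upgrades convergence to the quantitative rates in parts (a) and (c); in case (a) a final comparison with the fundamental solution of $(-\Delta)^m$ pins down the scale $|x|^{N-n}$.

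\emph{Third}, at the lower critical exponent $p=2_{m,*}$ one has $K_0(n,2_{m,*})=0$, so the classical substitution only detects the trivial equilibrium. Following the new change of variables introduced here for $m=3$, I would set
\begin{equation*}
	v(t,\theta)=r^{n-N}t^{(n-N)/N}u(r\theta),\qquad t=-\ln r,
\end{equation*}
which produces a \emph{nonautonomous} $2m$-th order cylindrical equation with coefficients smooth in $t$ and with well-controlled asymptotics as $t\to+\infty$. The associated nonautonomous Pohozaev functional $\mathcal H_{2_{m,*}}(v,t)$ is no longer monotone, but its $t$-derivative admits an integrable-in-$t$ bound, so it has a finite limit; together with the a priori bounds this drives $v$ to the unique nontrivial constant solution of the limiting algebraic equation, whose value $\widehat K_{N,0}(n)^{(n-N)/N}$ is forced by matching powers in the identity $(-\Delta)^m\bigl(r^{N-n}(-\ln r)^{(N-n)/N}\bigr)\sim \widehat K_{N,0}(n)\,r^{-n}(-\ln r)^{-n/N}$ as $r\to 0^+$.

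The main obstacle, already visible at $m=3$ (see Remark~\ref{rmk:signoncoefficients}), is controlling the signs of the coefficients of $(-\Delta)^m$ in the (modified) cylindrical representation: the Pohozaev monotonicity calculation picks up on the order of $2m-1$ lower-order terms whose net sign has to be correct, which reduces to a combinatorial identity for the polynomial $\prod_{j=0}^{m-1}(n-2j)$ appearing in the spherical decomposition of the poly-Laplacian. This is precisely where the hypotheses $(-\Delta)^j u\geqslant 0$ for each $j=1,\dots,m-1$ become indispensable: iterated superharmonicity supplies the sign of each auxiliary term and closes the bootstrap. Relaxing any of them --- for instance keeping only $(-\Delta)^{m-1}u\geqslant 0$ --- would seemingly require a genuinely new argument, which is why all of them appear in the statement of the conjecture.
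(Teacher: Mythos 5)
This statement is labelled a \emph{conjecture} in the paper: there is no proof of it anywhere in the manuscript, so there is nothing to compare your attempt against except the authors' completed argument in the special case $m=3$ (Theorem~\ref{maintheorem}). Your proposal is a faithful outline of how that $m=3$ argument would be transported to general $m$, but as written it is a plan, not a proof. The decisive gap is the one you yourself flag as ``the main obstacle'': the entire Pohozaev machinery hinges on the signs of the coefficients of $(-\Delta)^m$ in (modified) Emden--Fowler coordinates, so that $\frac{\ud}{\ud t}\mathcal{P}_{\rm cyl}$ is a sum of terms of a definite sign. For $m=3$ the authors verify this by explicit computation of the eighteen coefficients in \eqref{autonomouscoefficients1}--\eqref{autonomouscoefficients2}; for general $m$ the paper explicitly only \emph{speculates} (Remark~\ref{rmk:signoncoefficients}) that the analogous sign pattern holds. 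You do not supply the combinatorial identity that would settle this, and without it the monotonicity of the Pohozaev functional --- hence convergence to the correct equilibrium in parts (a), (b), (c) --- is unproved. This is precisely why the statement is a conjecture rather than a theorem.

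A second, related confusion: you assert that the hypotheses $(-\Delta)^j u\geqslant 0$ ``supply the sign of each auxiliary term'' in the Pohozaev computation. They do not. Those hypotheses give pointwise information about $u$ (used for the Harnack inequality, the Kelvin-transform comparisons, and the integral representation), whereas the coefficients appearing in $\frac{\ud}{\ud t}\mathcal{P}_{\rm cyl}$ are dimensional constants depending only on $n$, $m$, $p$, entirely independent of $u$; no positivity assumption on $u$ can fix their signs. Moreover, your closing claim that relaxing the superharmonicity hypotheses ``would seemingly require a genuinely new argument'' is contradicted by the paper's own remark that Huang--Li--Yang have shown the super poly-harmonic condition can be removed. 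Finally, in case (b) you assert that the constant $\widehat{K}_{N,0}(n)$ ``is forced by matching powers'' in the expansion of $(-\Delta)^m\bigl(r^{N-n}(-\ln r)^{(N-N)/N}\bigr)$, but you never carry out that expansion, so the specific formula $\frac{2^{m-2}(m-1)!}{m}\prod_{j=0}^{m-1}(n-2j)(n-N)^2$ remains unverified. In short: the strategy is the right one and mirrors the paper's, but every step that distinguishes a theorem from a conjecture is left unexecuted.
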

	
	\begin{remark}
		In a recent paper, it was independently proved by X. Huang, Y. Li, and H. Yang \cite{arXiv:2210.04619} that the super poly-harmonic condition can be removed.
		We also refer to Q. Ng\^o and D. Ye \cite{MR4438901} for more details on the blow-up limit case $R=+\infty$.
		We keep this condition in our manuscript because of its natural relations with curvature sign conditions for the upper critical situation $p=2^{\#}-1$.
		They also provided some upper bound estimates that will be important in our methods.
	\end{remark}
	
	Here is our plan for the rest of the paper. 
	In Section~\ref{sec:changeofvariables}, we introduce both the autonomous and nonautonomous Emden--Fowler coordinates. 
	In Section~\ref{sec:pohozaevfunctional}, we define the associated Pohozaev functionals, and we prove their (asymptotic) monotonicity properties.
	In Section~\ref{sec:upperbounds}, we prove some {\it a priori} upper bound estimates.
	In Section~\ref{sec:asympradialsymmetry}, we perform a variant of the integral moving spheres method and prove that solutions are asymptotically radially symmetric.
	In Section~\ref{sec:limitinglevels}, we study the limit values of the Pohozaev functional under blow-up and shrink-down sequences.
	In Section~\ref{sec:localbehavior}, we use the monotonicity formulas and some asymptotic analysis to prove the classification of the local asymptotic behavior in Theorem~\ref{maintheorem}.
	
	\numberwithin{equation}{section} 
	\numberwithin{theorem}{section}
	
	\section{Emden--Fowler coordinates}\label{sec:changeofvariables}
	In this section, we define the Emden--Fowler change of variables, which is used to transform the singular PDE \eqref{ourPDE} problem into an ODE problem.
	
	\subsection{Autonomous case}
	We define a classical change of variables, which transforms \eqref{ourPDE} into an ODE with constant coefficients.
	
	\begin{definition}
		Let us define the sixth order autonomous Emden--Fowler change of variables $($or cylindrical logarithm coordinates$)$ given by
		\begin{equation*}
			v(t,\theta)=r^{\gamma_p}u(r,\sigma), \quad \mbox{where} \quad t=\ln r, \quad \sigma=\theta=x|x|^{-1}.
		\end{equation*}  
	\end{definition}
	Using this coordinate system and performing a lengthy computation, we arrive at the following sixth order nonlinear PDE on the cylinder ${\mathcal{C}}_T:=(-\infty,T)\times \mathbb{S}^{n-1}$ with $T=\ln R<+\infty$,
	\begin{equation}\tag{$\mathcal C_{p,T}$}\label{ourPDEcyl}
		-P^3_{\rm cyl}v=f_p(v) \quad {\rm on} \quad {\mathcal{C}}_T.
	\end{equation}
	Here $P^3_{\rm cyl}$ is the tri-Laplacian written in cylindrical coordinates given by
	\begin{align*}
		P^3_{\rm cyl}&=\partial_t^{(6)}+K_{5}(n,p)\partial_t^{(5)}+K_{4}(n,p)\partial_t^{(4)}+K_{3}(n,p)\partial_t^{(3)}+K_{2}(n,p)\partial_t^{(2)}+K_{1}(n,p)\partial_t+K_{0}(n,p)&\\\nonumber
		&+2\partial^{(4)}_t\Delta_{\theta}+J_3(n,p)\partial^{(3)}_t\Delta_{\theta}+J_2(n,p)\partial^{(2)}_t\Delta_{\theta}+J_1(n,p)\partial_t\Delta_{\theta}+J_0(n,p)\Delta_{\theta}\\
		&+3\partial^{(2)}_t\Delta^2_{\theta}+L_1(n,p)\partial_t\Delta^2_{\theta}+L_0(n,p)\Delta^2_{\theta}+\Delta^3_{\theta},&
	\end{align*}
	where $K_j(n,p), J_j(n,p), L_j(n,p)$ for $j=0,1,2,3,4,5$ are dimensional constants given by \eqref{autonomouscoefficients1} and \eqref{autonomouscoefficients2}.
	
	\begin{definition}
		Let us consider the autonomous Emden--Fowler transformation as follows
		\begin{equation}\label{cyltransform}
			\mathfrak{F}:C_c^{\infty}(B_R^*)\rightarrow C_c^{\infty}(\mathcal{C}_{T}) \quad \mbox{given by} \quad
			\mathfrak{F}(u)=e^{\gamma_pt}u(e^{t},\theta):=v.
		\end{equation}
	\end{definition}
	
	\subsection{Non-autonomous case}
	In the lower critical case, because of the vanishing of the coefficient $K_0(n,2_{\#})$, we already know that the situation changes dramatically. 
	In this fashion, we define the so-called {nonautonomous Emden--Fowler change of variables} \cite{MR875297,MR4085120,MR633393,MR727703}.
	
	\begin{definition}
		Let us define the sixth order nonautonomous Emden--Fowler change of variables $($or cylindrical logarithm coordinates$)$ given by
		\begin{equation}\label{newcylindrical}
			w(t,\theta)=r^{6-n}(\ln r)^{\frac{6-n}{6}}u(r,\sigma), \quad {\rm where} \quad t=\ln r \quad {\rm and} \quad \sigma=\theta=x|x|^{-1}.
		\end{equation}
	\end{definition}
	Using this coordinate system and performing a lengthy computation, we arrive at 
	\begin{equation}\tag{$\widetilde{\mathcal C}_{T}$}\label{ourPDEcylnon}
		-\widetilde{P}^3_{\rm cyl}w=t^{-1}|w|^{2_{\#}-1}w \quad {\rm on} \quad {\mathcal{C}}_T.
	\end{equation}
	Here $\widetilde{P}^3_{\rm cyl}$ is the tri-Laplacian written in nonautonomous Emden--Fowler coordinates given by
	\begin{align*}
		\widetilde{P}^3_{\rm cyl}&=\partial_t^{(6)}+\widetilde{K}_{5}(n,t)\partial_t^{(5)}+\widetilde{K}_{4}(n,t)\partial_t^{(4)}+\widetilde{K}_{3}(n,t)\partial_t^{(3)}+\widetilde{K}_{2}(n,t)\partial_t^{(2)}+\widetilde{K}_{1}(n,p)\partial_t+\widetilde{K}_{0}(n,p)&\\\nonumber
		&+\widetilde{J}_4(n,t)\partial^{(4)}_t\Delta_{\theta}+\widetilde{J}_3(n,t)\partial^{(3)}_t\Delta_{\theta}+\widetilde{J}_2(n,t)\partial^{(2)}_t\Delta_{\theta}+\widetilde{J}_1(n,t)\partial_t\Delta_{\theta}+\widetilde{J}_0(n,t)\Delta_{\theta}\\
		&+\widetilde{L}_2(n,t)\partial^{(2)}_t\Delta^2_{\theta}+\widetilde{L}_1(n,t)\partial_t\Delta^2_{\theta}+\widetilde{L}_0(n,t)\Delta^2_{\theta}+\Delta^3_{\theta},&
	\end{align*}
	where $\widetilde{K}_j(n,t), \widetilde{J}_j(n,t), \widetilde{L}_j(n,t)$ for $j=0,1,2,3,4,5$ are real-valued functions given by \eqref{coeficcientnonautonomous1} and \eqref{coeficcientnonautonomous2}. 
	
	Analogously to the standard autonomous case, we also consider a transformation that sends a singular solution to \eqref{ourPDE} with $p=2_{\#}$ into solutions to a nonautonomous ODE on the cylinder.
	\begin{definition}
		Let us consider the non-autonomous Emden--Fowler transformation as follows
		\begin{equation}\label{noncyltransform}
			\tilde{\mathfrak{F}}:C_c^{\infty}(B_R^*)\rightarrow C_c^{\infty}(\mathcal{C}_T) \quad \mbox{given by} \quad
			\tilde{\mathfrak{F}}(u)=e^{(6-n)t}t^{\frac{6-n}{6}}u(e^{t},\theta):=w.		
		\end{equation}
	\end{definition}
	
	\section{Pohozaev functionals}\label{sec:pohozaevfunctional}
	Next, we define two central characters in our studies, the so-called autonomous and nonautonomous Pohozaev functionals. 
	The heuristics are that these Pohozaev functionals classify whether or not the blow-up limit solutions in cylindrical coordinates are (asymptotically) stable equilibrium points of the associated sixth order ODE. 
	More precisely, from the dynamical systems point of view, it works as a Lyapunov functional.
	Let us remark that to prove the Pohozaev invariant to be well-defined, one needs to use some upper estimates and asymptotic symmetry that will be proved independently in Section~\ref{sec:asympradialsymmetry} (see Lemmas~\ref{lm:uniformestimategidasspruck} and \ref{lm:sharpestimate}).
	
	\subsection{Autonomous case}
	Initially, let us define {the Pohozaev functional} associated to the PDE equation on the cylinder given by \eqref{ourPDEcyl}, which will play a central role in our analysis (for more details on this class of invariants, we refer to \cite{MR3869387,MR4094467,MR4123335,arXiv:1804.00817}).
	
	\begin{remark}\label{rmk:signoncoefficients}
		By direct computations, notice that when $p\in(2_{\#},2^{\#}-1)$, one has the sign relations 
		\begin{equation*}
			K_5(n,p),K_1(n,p),L_1(n,p)\geqslant 0, \quad  {\rm and} \quad K_3(n,p),J_3(n,p),J_1(n,p)\leqslant 0.
		\end{equation*}
		In addition, we can explicitly compute these coefficients in the limiting situations $p=2_{\#}$ and $p=2^{\#}-1$ \eqref{coeficcientslower} and \eqref{coeficcientsupper}.
		This change in sign behavior explains why one needs to use distinct methods when the power parameter $p\in(1,+\infty)$ changes.
		
		With respect to Conjecture~\ref{conj:poly-laplacian}, we speculate that a relationship like this should hold for the coefficients of the poly-Laplacian written in Emden--Fowler coordinates.
	\end{remark}

	\begin{definition}
		For any $v\in C^6(\mathbb R)$ positive solution to \eqref{ourPDEcyl} with $p\in(2_{\#},2^{\#}-1)$, let us define its {\it cylindrical Pohozaev functional} by 
		\begin{align*}
			\mathcal{P}_{\rm cyl}(t,v)&=\displaystyle\int_{\mathbb{S}_t^{n-1}}\mathcal{H}(t,\theta,v)\ud\theta.&
		\end{align*}
		Here $\mathbb{S}_t^{n-1}=\{t\}\times\mathbb{S}^{n-1}$ is the cylindrical ball with volume element given by $\ud\theta=e^{-2t}\ud\sigma$, where $\ud\sigma_r$ is the volume element of the ball of radius $r>0$ in $\mathbb{R}^n$. 
		In addition, we set
		\begin{align*}
			\mathcal{H}(t,\theta,v):=\mathcal{H}_{{\rm rad}}(t,\theta,v)+\mathcal{H}_{\rm ang}(t,\theta,v),
		\end{align*}
		where 
		\begin{align*}
			\mathcal{H}_{{\rm rad}}(t,\theta,v)&:=\left(v^{(5)}v^{(1)}-v^{(4)}v^{(2)}+\frac{1}{2}{v^{(3)}}^2\right)+K_5\left(v^{(4)}v^{(1)}-v^{(3)}v^{(2)}\right)&\\
			&+{K_4}\left(v^{(3)}v^{(1)}-\frac{1}{2}{v^{(2)}}^{2}\right)+K_3v^{(2)}v^{(1)}+\frac{K_2}{2}{v^{(1)}}^2+\frac{K_0}{2}v^2+\frac{|v|^{p+1}}{p+1}
		\end{align*}
		and
		\begin{align*}
			\mathcal{H}_{\rm ang}(t,\theta,v)
			&:=-J_4\left(\partial_t^{(3)}\nabla_\theta v\partial_t\nabla_\theta v-|\partial_t^{(2)}\nabla_\theta v|^2\right)-\frac{J_2}{2}|\partial_t^{(2)}\nabla_\theta v|^2-\frac{J_1}{2}|\partial_t^{(2)}\nabla_\theta v|^2-\frac{J_0}{2}|\nabla_\theta v|^2&\\
			&+\frac{L_2}{2}|\partial_t^{(2)}\Delta_\theta v|^2+\frac{L_0}{2}|\partial_t^{(2)}\Delta_\theta v|^2+\frac{1}{2}|\Delta_{\theta}v|^2.&		
		\end{align*}
	\end{definition}
	
	\begin{remark}\label{rmk:pohozaevspherical}
		Using the inverse of the Emden--Fowler transformation, one can also construct the {\it spherical Pohozaev functional} given by $\mathcal{P}_{\rm sph}(r,u):=\left(\mathcal{P}_{\rm cyl}\circ\mathfrak{F}^{-1}\right)\left(t,v\right)$. 
		From \cite[Propositon~A.1]{arxiv:1901.01678}, it follows that $\mathcal{P}_{\rm sph}$ also satisfies a monotonicity property.
	\end{remark}
	
	We deduce a monotonicity formula for the logarithmic cylindrical Pohozaev functional $\mathcal{P}_{\rm cyl}(t,v)$, which will be essential to show that the limit Pohozaev invariant is well-defined as $t\rightarrow-\infty$.
	\begin{proposition}\label{lm:monotonicityformula}
		Let $v\in C^6(\mathbb R)$ be a positive solution to \eqref{ourPDEcyl} with $p\in(2_{\#},2^{\#}-1)$.
		If $-\infty<t_1\leqslant t_2<T$, then $\mathcal{P}_{\rm cyl}(t_2,v)-\mathcal{P}_{\rm cyl}(t_1,v)\leqslant0$. 
		More precisely, we have the monotonicity formula
		\begin{align}\label{eq:monotonicityformula}
			\frac{\ud}{\ud t}\mathcal{P}_{\rm cyl}(t,v)=\int_{\mathbb{S}_t^{n-1}}\left(-K_{5}|\partial_t^{(3)}v|^{2}+K_{3}|\partial_t^{(2)}v|^{2}-K_{1}|\partial_tv|^{2}-J_{3}|\partial^{(2)}_t\nabla_{\theta}v|^2-L_{1}|\partial_t\Delta_{\theta}v|^2\right)\ud\theta.
		\end{align}
		In particular, ${\mathcal{P}}_{\rm cyl}(t,v)$ is nonincreasing, and so $\mathcal{P}_{\rm cyl}(-\infty,v):=\lim_{t\rightarrow-\infty}{\mathcal{P}}_{\rm cyl}(t,v)$ exists.
	\end{proposition}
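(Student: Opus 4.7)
The plan is to prove the identity \eqref{eq:monotonicityformula} by direct differentiation of the Pohozaev density $\mathcal{H}(t,\theta,v)$ along a solution $v$ of \eqref{ourPDEcyl}, followed by an algebraic reorganization that uses the PDE to eliminate the highest-order $t$-derivative and the self-adjointness of $\Delta_\theta$ on $\mathbb{S}^{n-1}$ to dispose of all angular boundary contributions. The key algebraic observation is that $\mathcal{H}$ has been built precisely so that every mixed product $v^{(k)}v^{(j)}$ generated by differentiation either becomes an exact $t$-derivative (already absorbed into $\mathcal{H}$) or cancels against the analogous piece coming from testing the equation against $\partial_t v$, leaving only the quadratic residues tied to the \emph{odd}-index coefficients $K_5,K_3,K_1,J_3,L_1$.

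For the radial block I would use the one-line energy identities
\begin{equation*}
\tfrac{\ud}{\ud t}\!\bigl[v^{(5)}v^{(1)}-v^{(4)}v^{(2)}+\tfrac12 (v^{(3)})^2\bigr]=v^{(6)}v^{(1)},\quad \tfrac{\ud}{\ud t}\!\bigl[v^{(4)}v^{(1)}-v^{(3)}v^{(2)}\bigr]=v^{(5)}v^{(1)}-(v^{(3)})^2,
\end{equation*}
\begin{equation*}
\tfrac{\ud}{\ud t}\!\bigl[v^{(3)}v^{(1)}-\tfrac12 (v^{(2)})^2\bigr]=v^{(4)}v^{(1)},\quad \tfrac{\ud}{\ud t}[v^{(2)}v^{(1)}]=v^{(3)}v^{(1)}+(v^{(2)})^2,
\end{equation*}
together with the elementary $\tfrac{\ud}{\ud t}[\tfrac{K_2}{2}(v^{(1)})^2+\tfrac{K_0}{2}v^2+\tfrac{|v|^{p+1}}{p+1}]=K_2 v^{(2)}v^{(1)}+K_0 v v^{(1)}+f_p(v)v^{(1)}$. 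Multiplying the radial portion of \eqref{ourPDEcyl} by $v^{(1)}$ expresses $v^{(6)}v^{(1)}$ as $-\sum_{k=1}^{5}K_k v^{(k+1)}v^{(1)}-K_0 v v^{(1)}-f_p(v)v^{(1)}$ (modulo angular remainders), and substituting back causes all mixed products $v^{(k)}v^{(1)}$ to cancel in pairs. What survives is exactly $-K_5 (v^{(3)})^2+K_3 (v^{(2)})^2-K_1 (v^{(1)})^2$, which is the radial part of the right-hand side of \eqref{eq:monotonicityformula}.

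The angular block is handled by repeating the same bookkeeping on $\mathcal{H}_{\rm ang}$, but integrating over $\mathbb{S}^{n-1}$ after differentiation and exploiting the identities $\int_{\mathbb{S}^{n-1}}(\Delta_\theta\phi)\psi\,\ud\theta=\int_{\mathbb{S}^{n-1}}\phi(\Delta_\theta\psi)\,\ud\theta$ and $\int_{\mathbb{S}^{n-1}}\nabla_\theta\phi\cdot\nabla_\theta\psi\,\ud\theta=-\int_{\mathbb{S}^{n-1}}\phi\Delta_\theta\psi\,\ud\theta$ to transfer $\Delta_\theta$ onto $\partial_t v$. Every term of the mixed form $\partial_t^{(k)}\Delta_\theta^{\ell}v\cdot\partial_t v$ that appears combines with the corresponding mixed angular term produced by testing $(-P_{\rm cyl}^3)v=f_p(v)$ against $\partial_t v$ on $\mathbb{S}_t^{n-1}$; the same telescoping as in the radial case leaves only the residues $-J_3|\partial_t^{(2)}\nabla_\theta v|^2-L_1|\partial_t\Delta_\theta v|^2$. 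Summing both blocks gives \eqref{eq:monotonicityformula}. In view of the sign relations in Remark~\ref{rmk:signoncoefficients}, each summand of the integrand has the right sign for $p\in(2_{\#},2^{\#}-1)$, so $t\mapsto\mathcal{P}_{\rm cyl}(t,v)$ is nonincreasing; combined with the \emph{a priori} upper bounds from Section~\ref{sec:upperbounds} (which keep $\mathcal{P}_{\rm cyl}$ bounded below as $t\to-\infty$), monotone convergence yields the existence of $\mathcal{P}_{\rm cyl}(-\infty,v)$.

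The genuine obstacle is the bookkeeping in the angular block. There are mixed derivatives of combined order up to six spread across three families ($\Delta_\theta^0$, $\Delta_\theta$, $\Delta_\theta^2$), and it is not \emph{a priori} transparent that every cross term created by differentiating $\mathcal{H}_{\rm ang}$ and by integrating by parts on $\mathbb{S}^{n-1}$ lines up with the corresponding cross term obtained from the equation. The cleanest organization is to treat each of the three families as a self-contained ``weighted one-dimensional energy'' with $\Delta_\theta$ playing the role of a self-adjoint weight, apply the same four identities from the radial step block by block, and only then sum; the pitfall is to miscount the combinatorial constants $2$ and $3$ appearing in front of $\partial_t^{(4)}\Delta_\theta$ and $\partial_t^{(2)}\Delta_\theta^2$ in $P^3_{\rm cyl}$, which propagate into the $J$'s and $L$'s.
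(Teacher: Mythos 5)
Your proposal is correct and follows essentially the same route as the paper: differentiate the Pohozaev density, test \eqref{ourPDEcyl} against $v^{(1)}$, integrate by parts on $\mathbb{S}^{n-1}$ so that all cross terms telescope and only the odd-coefficient quadratic residues survive, then invoke the sign relations of Remark~\ref{rmk:signoncoefficients} for monotonicity and the a priori bounds of Lemma~\ref{lm:universalestimates} for existence of the limit. The paper compresses all of this into ``multiply by $v^{(1)}$ and integrate by parts,'' so your explicit energy identities are simply a fuller account of the same computation.
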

	
	\begin{proof}
		Initially, observe that \eqref{eq:monotonicityformula} follows by a direct computation, which consists multiplying  \eqref{ourPDEcyl} by $v^{(1)}$, and integrating by parts. 
		Hence, using Remark~\ref{rmk:signoncoefficients}, we find $\partial_t\mathcal{P}_{\rm cyl}(t,v)\leqslant0$, which proves that the cylindrical Pohozaev functional is nonincreasing. 
		Finally, since by Lemma~\ref{lm:universalestimates}, the Pohozaev functional is bounded above, the full existence of limit follows.
	\end{proof}
	
	\begin{remark}\label{rmk:limitrelation}
		Using the inverse of cylindrical transform, that is,  $\mathcal{P}_{\rm sph}=\mathcal{P}_{\rm cyl}\circ\mathfrak{F}^{-1}$, it follows that $\mathcal{P}_{\rm sph}(0,u)=\lim_{r\rightarrow0}\mathcal{P}_{\rm sph}(r,u)=\lim_{t\rightarrow-\infty}{\mathcal{P}}_{\rm cyl}(t,v).$
		The last equality implies that the Pohozaev invariant is well-defined in the punctured ball when $R\rightarrow0$.
	\end{remark}
	
	\subsection{Non-autonomous case}
	In the lower critical case $p=2_{\#}$, since $K_{0,\#}:=K_0(n,2_{\#})=0$ and $\gamma_{2_{\#}}=n-6$, a new cylindrical transformation was defined.
	Concerning this nonautonomous cylindrical transformation, we compute its associated Pohozaev functional, which in this situation has some time-dependent terms.
	
	\begin{definition} 
		For any $w\in C^6(\mathbb R)$ be a positive solution to \eqref{ourPDEcylnon}, let us define its {\it cylindrical nonautonomous Pohozaev functional} by 
		\begin{align*}
			\widetilde{\mathcal{P}}_{\rm cyl}(t,w)&=\displaystyle\int_{\mathbb{S}_t^{n-1}}\widetilde{\mathcal{H}}(t,\theta,w)\ud\theta.&
		\end{align*} 
		Here
		\begin{align*}
			\widetilde{\mathcal{H}}(t,\theta,w):=\widetilde{\mathcal{H}}_{{\rm rad}}(t,\theta,w)+\widetilde{\mathcal{H}}_{\rm ang}(t,\theta,w),
		\end{align*}
		where
		\begin{align*}
			\widetilde{\mathcal{H}}_{{\rm rad}}(t,\theta,w)&:=t\left(w^{(5)}w^{(1)}-w^{(4)}w^{(2)}+\frac{1}{2}{w^{(3)}}^2\right)+t\widetilde K_5\left(w^{(4)}w^{(1)}-w^{(3)}w^{(2)}\right)&\\
			&+t\widetilde{K}_4\left(w^{(3)}w^{(1)}-\frac{1}{2}{w^{(2)}}^{2}\right)+t\widetilde K_3w^{(2)}w^{(1)}+\frac{t\widetilde K_1}{2}{w^{(1)}}^2+\frac{t \widetilde K_0}{2}w^2+\frac{n-6}{2(n-3)}|w|^{2_{\#}+1}
		\end{align*}
		and
		\begin{align*}
			\widetilde{\mathcal{H}}_{\rm ang}(t,\theta,w)&:=-t\widetilde{J}_4\left(\partial_t^{(3)}\nabla_\theta w\partial_t\nabla_\theta w-|\partial_t^{(2)}\nabla_\theta w|^2\right)-\frac{t\widetilde{J}_2}{2}|\partial_t^{(2)}\nabla_\theta w|^2-\frac{t\widetilde{J}_1}{2}|\partial_t^{(2)}\nabla_\theta w|^2-\frac{t\widetilde{J}_0}{2}|\nabla_\theta w|^2&\\
			&+\frac{t\widetilde{L}_2}{2}|\partial_t^{(2)}\Delta_\theta w|^2+\frac{t\widetilde{L}_0}{2}|\partial_t^{(2)}\Delta_\theta w|^2+\frac{t}{2}|\Delta_{\theta}w|^2.&
		\end{align*}
	\end{definition}
	
	\begin{remark}\label{rmk:estimates}
		Initially, notice that the nonautonomous Pohozaev invariant is well-defined for any $t\in(-\infty,T)$ since $u\in C^{6}(\mathbb R^n\setminus\{0\})$ is smooth away from the origin. 
		In addition, due to Proposition~\ref{prop:asymptoticsymmetry}, for $R<\infty$, we get that $|w(t,\theta)-\overline{w}(t)|=\mathcal{O}(e^{\beta t})$ as $t\rightarrow-\infty $, for some $\beta>0$, where $\overline{w}(t)$ is the average of $w(t,\theta)$ over $\theta\in \mathbb{S}_t^{n-1}$ with $w=\widetilde{\mathfrak{F}}(u)$ given by \eqref{noncyltransform}. In particular, one can find some large $T_{0}\gg1$ and $C>0$ $($independent of $t$$)$ satisfying
		\begin{equation}\label{angularestimates}
			\left|\nabla_{\theta} w(t,\theta)\right|+\left|\Delta_{\theta} w(t,\theta)\right|+\left|\nabla_{\theta}\Delta_{\theta} w(t,\theta)\right|+\left|\Delta^2_{\theta} w(t,\theta)\right| \leqslant Ce^{\beta t} \quad {\rm in} \quad \mathcal{C}_{T_0}.
		\end{equation}
		Moreover, from the sharp estimate in Proposition~\ref{lm:monotonicityformula} and the gradient estimate \eqref{lm:gradientestimates}, it follows 
		\begin{equation}\label{radialestimates}
			\sum_{j=0}^5|\partial_t^{(j)}w(t,\theta)|\leqslant C \quad {\rm in} \quad \mathcal{C}_{T_0}.
		\end{equation}
	\end{remark}
	
	Before we prove the monotonicity formula, we need to show an auxiliary result 
	
	\begin{lemma}\label{lm:signproperties}
		Let $w\in C^6(\mathbb R)$ be a positive solution to \eqref{ourPDEcylnon}. 
		Then, $\lim_{t\rightarrow-\infty}w(t,\theta)\in \{0,\widehat{K}_{0}(n)^{\frac{n-6}{6}}\}$, 
		where $\widehat{K}_{j}(n)=\lim_{t\rightarrow-\infty}t\widetilde{K}_j(n,t)$ for $j=0,1,2,3,4,5$.
		Moreover,
		\begin{equation*}
			\lim_{t\rightarrow-\infty}{w}^{(j)}(t,\theta)=\lim_{t\rightarrow-\infty}\overline{w}^{(j)}(t)=0 \quad {\rm for \ all} \quad j\geqslant1.
		\end{equation*}
	\end{lemma}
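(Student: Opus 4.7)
\textit{Proof plan.} The strategy is to reduce the claim to an ODE analysis of the spherical average $\overline w(t):=\avint_{\mathbb{S}^{n-1}}w(t,\theta)\,\ud\theta$ by exploiting the asymptotic radial symmetry, and then combine the resulting compactness of translates with the $t$-weighted equation in order to identify all subsequential limits.

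First, I average \eqref{ourPDEcylnon} over $\mathbb{S}^{n-1}$. The purely angular operators integrate to zero, and by \eqref{angularestimates} together with $|w-\overline w|=\mathcal{O}(e^{\beta t})$, both the mixed radial/angular contributions and the averaging error in the nonlinearity are $\mathcal{O}(e^{\beta t})$. This yields a nonautonomous sixth-order ODE
\begin{equation*}
	\overline w^{(6)}+\widetilde K_5(n,t)\,\overline w^{(5)}+\cdots+\widetilde K_0(n,t)\,\overline w+t^{-1}\overline w^{\,2_{\#}}=\eta(t),\qquad\eta(t)=\mathcal{O}(e^{\beta t}).
\end{equation*}

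Next, the radial estimates \eqref{radialestimates} give $|\overline w^{(j)}(t)|\leqslant C$ for $j=0,\ldots,5$, and the equation then also controls $\overline w^{(6)}$. For any sequence $t_k\to-\infty$ the translates $\phi_k(s):=\overline w(t_k+s)$ are uniformly bounded in $C^5_{\rm loc}(\mathbb{R})$, so by Arzel\`a--Ascoli a subsequence converges in $C^5_{\rm loc}$ to a bounded limit $\phi_\infty$. Since $\widetilde K_j(n,t)\to 0$ (because $t\widetilde K_j(n,t)\to \widehat K_j(n)$ is finite) and $t^{-1}\to 0$, passing to the limit in the shifted equation gives $\phi_\infty^{(6)}\equiv 0$; a polynomial of degree $\leqslant 5$ bounded on all of $\mathbb{R}$ must be a constant $c_\infty\geqslant 0$. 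This constancy forces $\overline w^{(j)}(t_k)\to 0$ for every $j\in\{1,\ldots,5\}$ and every sequence, so $\lim_{t\to-\infty}\overline w^{(j)}(t)=0$ for these $j$; the case $j=6$ follows from the equation itself, higher $j$ by repeated differentiation, and combined with \eqref{angularestimates} and its $t$-derivatives one upgrades to $\lim_{t\to-\infty}w^{(j)}(t,\theta)=0$ for every $j\geqslant 1$.

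To identify $c_\infty$, I multiply the averaged ODE by $t$, obtaining
\begin{equation*}
	t\,\overline w^{(6)}+\sum_{j=1}^{5}t\widetilde K_j(n,t)\,\overline w^{(j)}+t\widetilde K_0(n,t)\,\overline w+\overline w^{\,2_{\#}}=t\,\eta(t).
\end{equation*}
The sum for $j\geqslant 1$ tends to zero since $t\widetilde K_j\to\widehat K_j$ is bounded and $\overline w^{(j)}\to 0$, while $t\widetilde K_0\to\widehat K_0(n)$ and $\overline w\to c_\infty$ along the chosen subsequence. Moreover, if $t\,\overline w^{(6)}(t)\to L\neq 0$, then $\overline w^{(6)}(t)\sim L/t$ would integrate over $[2t_k,t_k]$ to give $\overline w^{(5)}(t_k)-\overline w^{(5)}(2t_k)\to -L\ln 2\neq 0$, contradicting $\overline w^{(5)}\to 0$. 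Hence $L=0$ and $c_\infty$ satisfies the algebraic relation $\widehat K_0(n)\,c_\infty+c_\infty^{\,2_{\#}}=0$ (with the conventional sign of $\widehat K_0(n)$ making $c_\infty\geqslant 0$ admissible); since $2_{\#}-1=6/(n-6)$, the only admissible values are $c_\infty\in\{0,\widehat K_0(n)^{(n-6)/6}\}$.

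The main obstacle is the last step. The pointwise identity $\overline w^{(6)}(s)=-s^{-1}(\widehat K_0(n)\overline w(s)+\overline w(s)^{\,2_{\#}})+o(s^{-1})$ must hold uniformly on intervals of length comparable to $|t_k|$ for the integration argument to succeed, which in turn requires careful use of the logarithmic scale inherent in the nonautonomous change of variables \eqref{noncyltransform}. A further continuity argument, exploiting the discreteness of $\{0,\widehat K_0(n)^{(n-6)/6}\}$ together with $\overline w'(t)\to 0$, is then needed to promote the subsequential statement to the existence of the pointwise limit $\lim_{t\to-\infty}\overline w(t)$, and hence of $\lim_{t\to-\infty}w(t,\theta)$ via the angular decay.
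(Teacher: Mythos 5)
Your route is genuinely different from the paper's: the paper averages the equation, multiplies by $t$, factors the limiting operator $\overline{P}_{\rm cyl}$ into a product of three second order operators, and then invokes the ODE lemmas of the companion works (\cite[Lemmas~4.8 and 4.10]{arXiv:2210.04376}, \cite[Section~2]{MR4094467}), whereas you argue by compactness of the translates $\phi_k(s)=\overline w(t_k+s)$. Your first half is sound and in fact more self-contained than the paper's citation: the translates converge in $C^5_{\rm loc}$ to a bounded entire solution of $\phi_\infty^{(6)}=0$, hence to a constant, which gives $\overline w^{(j)}(t)\to 0$ for $j=1,\dots,5$ along the full limit, and the angular estimates \eqref{angularestimates} upgrade this to $w^{(j)}(t,\theta)\to 0$.

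The identification of $\lim_{t\to-\infty}\overline w(t)$, however, has a genuine gap, and it is precisely the heart of the lemma. From the averaged equation multiplied by $t$ and the vanishing of the derivatives you obtain the pointwise identity $t\,\overline w^{(6)}(t)=F(\overline w(t))+\mathrm{o}(1)$ with $F(c)=c^{2_\#}-\widehat K_0(n)c$ (up to sign conventions). To conclude that the $\omega$-limit set of $\overline w$ — a compact \emph{connected} subset of $[0,C]$ — is contained in the two-point set $F^{-1}(0)=\{0,\widehat K_0(n)^{(n-6)/6}\}$, you must show that $F(\overline w(t))\to 0$ as a \emph{full} limit. Your reductio only excludes the case where $t\,\overline w^{(6)}(t)$ converges to a nonzero number $L$; it does not exclude oscillation of $\overline w(t)$ between values where $F$ is nonzero, in which case $t\,\overline w^{(6)}(t)$ simply fails to converge and no contradiction arises. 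The integration over $[2t_k,t_k]$ would require $\overline w(s)$ to stay near $c_\infty$ on an interval of length $|t_k|$, but the only control available is $\overline w'(t)\to 0$ pointwise, which bounds the oscillation of $\overline w$ on $[2t_k,t_k]$ only by $\sup|\overline w'|\cdot|t_k|$ — not small. (A function such as $\sin(\ln|t|)$ has all derivatives tending to zero and $t\,\partial_t^{(6)}\to 0$, yet does not converge; so nothing you have established so far rules out this behavior for $\overline w$.) You flag this as "the main obstacle" but offer no mechanism to close it; the paper's mechanism is exactly the second order factorization of $\overline{P}_{\rm cyl}$ together with the comparison/oscillation arguments of the cited companion lemmas, and some substitute for that input (for instance an energy or Pohozaev-type monotonicity at the level of the averaged ODE) is indispensable here.
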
 
	
	\begin{proof}
		Indeed, using the asymptotic symmetry in Proposition~\ref{prop:asymptoticsymmetry} and the estimates in Remark~\ref{rmk:estimates}, we have that the cylindrical transformation of spherical average $\overline{w}=\widetilde{\mathfrak{F}}(\overline{u})$ satisfies,
		\begin{equation*}
			\overline{P}_{\rm cyl}\overline{w}=-|\overline{w}|^{2_{\#}-1}\overline{w}+\mathcal{O}(\overline{w}(t)e^{t}) \quad {\rm as} \quad t\rightarrow-\infty,
		\end{equation*} 
		where
		\begin{equation*}
			\overline{P}_{\rm cyl}=t\partial_t^{(6)}+\widehat{K}_{5}(n)\partial_t^{(5)}+\widehat{K}_{4}(n)\partial_t^{(4)}+\widehat{K}_{3}(n)\partial_t^{(3)}+\widehat{K}_{2}(n)\partial_t^{(2)}+\widehat{K}_{1}(n)\partial_t+\widehat{K}_{0}(n).
		\end{equation*} 
		Thus, decomposing the left-hand side of the last equation into a product of three second order operators, we can adapt the arguments in \cite[Section~2]{MR4094467} (see also \cite[Section~4]{arXiv:2002.12491})
		for the last sixth order asymptotic identity. In this fashion, we split the proof of the claim into two steps:
		
		\noindent{\bf Step 1:} Either $w(t,\theta)=\mathrm{o}(1)$ or $w(t,\theta)=\widehat{K}_0(n)^{\frac{n-6}{6}}+\mathrm{o}(1)$ as $t\rightarrow-\infty $.
		
		\noindent As a matter of fact, the conclusion follows directly from the same argument in \cite[Lemma~4.10]{arXiv:2210.04376} to the limit ODE \eqref{ourPDEcylnon} with $T=\infty$.
		
		\noindent{\bf Step 2:} $\partial_t^{(j)}w(t,\theta)=\mathrm{o}(1)$ as $t\rightarrow-\infty $ for all $j\geqslant1$.
		
		\noindent Indeed, we can adapt the proof in \cite[Lemma~4.8]{arXiv:2210.04376} to the ODE \eqref{ourPDEcylnon} with $T=\infty$.	
		
		Steps 1 and 2 together give the proof for the first claim.
	\end{proof}
	
	To prove the full existence of $\widetilde{\mathcal{P}}_{\rm cyl}(-\infty,w)$, we shall verify the estimates in the next lemma, which is a sixth order version of \cite[Lemma~3.2]{MR875297}. 
	Due to the appearance of higher order derivative terms, we give different proof to the ones in the lemma quoted above. 
	Namely, our proof is based on Lemma~\ref{lm:signproperties} combined with a simple L'H\^ospital rule.
	Initially, by differentiating $\widetilde{\mathcal{H}}(t,\theta,w)$ with respect to $t$, integrating by parts over $\mathbb{S}_t^{n-1}$ (using differentiation under the integral sign one can even omit the dependence on $t$), and using \eqref{ourPDEcylnon}, we find
	\begin{align*}
		\frac{\ud}{\ud t}\widetilde{\mathcal{P}}_{\rm cyl}(t,w)
		=\Xi_{\rm rad}(t,w)+\Xi_{\rm ang}(t,w).
	\end{align*}
	Here 
	\begin{align}\label{angularerror}
		\quad \Xi_{\rm rad}(t,w):=\int_{\mathbb{S}^{n-1}}\partial_t\widetilde{\mathcal{H}}_{{\rm rad}}(t,\theta,w)\ud\theta \quad {\rm and} \quad \Xi_{\rm ang}(t,w):=-\int_{\mathbb{S}^{n-1}}\partial_t\widetilde{\mathcal{H}}_{{\rm ang}}(t,\theta,w)\ud\theta,
	\end{align}
	where
	\begin{align*}
		\partial_t\widetilde{\mathcal{H}}_{{\rm rad}}(t,\theta,w)&=w^{(5)}w^{(1)}-w^{(4)}w^{(3)}+{w^{(3)}}^2+\mathfrak{p}_5w^{(4)}w^{(1)}-\mathfrak{p}_5w^{(3)}w^{(2)}&\\\nonumber
		&+ \mathfrak{p}_4w^{(3)}w^{(1)}-\mathfrak{p}_4{w^{(2)}}^2+\mathfrak{p}_3w^{(2)}w^{(1)}+\mathfrak{p}_2{w^{(2)}}^{2}+\mathfrak{p}_1{w^{(1)}}^2+\mathfrak{p}_0w^2.&
	\end{align*}
	The real-valued functions $\mathfrak{p}_j(n,\cdot):(-\infty,\ln R)\rightarrow\mathbb R$ are given by \eqref{coefficientspohozaev}.
	More explicitly, we have
	\begin{align}\label{estimate0}
		\mathfrak{p}_0(n,t)&:=\frac{1}{36}(3n^3-48n^2+228n-320)t^{-2}+\mathcal{O}(t^{-4}) \quad {\rm as} \quad t\rightarrow-\infty ,&\\\label{estimate1}
		\mathfrak{p}_1(n,t)&:=\frac{1}{864}(n-6)(3456n^2-20736n+27648)t&\\\nonumber
		&-\frac{1}{864}(n-6)(864n^3-10368n^2+44928n-64512)+\mathcal{O}(t^{-1}) \quad {\rm as} \quad t\rightarrow-\infty ,&\\\label{estimate2}
		\mathfrak{p}_2(n,t)&:=-(3n^3-48n^2+228n-320)+\mathcal{O}(t^{-2}) \quad {\rm as} \quad t\rightarrow-\infty ,&\\\label{estimate3}
		\mathfrak{p}_3(n,t)&:=-\frac{1}{2}(n^3-30n^2+212n-408)+\mathcal{O}(t^{-2}) \quad {\rm as} \quad t\rightarrow-\infty ,&\\\label{estimate4}
		\mathfrak{p}_4(n,t)&:=\frac{1}{2}(3 n^2-42n+124)+\mathcal{O}(t^{-2}) \quad {\rm as} \quad t\rightarrow-\infty ,&\\\label{estimate5}
		\mathfrak{p}_5(n,t)&:=-\frac{1}{2}{(n-6)}.&
	\end{align}
	
	Now we use the asymptotic estimate proved in Lemma~\ref{lm:signproperties} to prove the result below
	
	\begin{lemma}\label{lm:estimateangularparts}
		Let $w\in C^6(\mathbb R)$ be a positive solution to \eqref{ourPDEcylnon}. Then, $\lim_{t\rightarrow-\infty }\Xi_{\rm rad}(t,w)=0.$
	\end{lemma}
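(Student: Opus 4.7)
The plan is to reduce $\Xi_{\rm rad}(t,w)$ to its eleven constituent monomials in $\partial_t \widetilde{\mathcal{H}}_{\rm rad}$ and show that each, after integration over $\mathbb S^{n-1}$, vanishes as $t\to-\infty$. The two main inputs are Lemma~\ref{lm:signproperties}, which gives the pointwise decay $w^{(j)}(t,\theta)\to 0$ for $j\geqslant 1$ and $w(t,\theta)\to L\in\{0,\widehat{K}_0(n)^{(n-6)/6}\}$, and Remark~\ref{rmk:estimates}, which yields the uniform bound $\sum_{j=0}^{5}|\partial_t^{(j)}w|\leqslant C$ on $\mathcal{C}_{T_0}$. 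Combined with dominated convergence, these immediately give $\int_{\mathbb S^{n-1}} w^{(j)}w^{(k)}\,\ud\theta\to 0$ whenever $j\geqslant 1$ and $0\leqslant k\leqslant 5$.

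This observation handles nine of the eleven terms: the coefficient-free $w^{(5)}w^{(1)}-w^{(4)}w^{(3)}+{w^{(3)}}^2$ and the six products multiplied by the coefficients $\mathfrak{p}_5,\mathfrak{p}_4,\mathfrak{p}_3,\mathfrak{p}_2$, which are uniformly bounded by \eqref{estimate2}--\eqref{estimate5}. Each such product has at least one factor $w^{(j)}$ with $j\geqslant 1$ vanishing pointwise and all factors uniformly bounded, so dominated convergence applies. The tenth term, $\mathfrak{p}_0 w^2$, likewise goes to zero uniformly because $|\mathfrak{p}_0|=O(t^{-2})$ by \eqref{estimate0} while $|w|$ is bounded.

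The only delicate contribution is $\mathfrak{p}_1 {w^{(1)}}^2$. By \eqref{estimate1}, $\mathfrak{p}_1(n,t)=A(n)t+B(n)+O(t^{-1})$ with $A(n)=\frac{(n-6)(3456n^2-20736n+27648)}{864}\neq 0$ for $n\geqslant 7$; the bounded part $B(n)+O(t^{-1})$ is controlled as above, while the leading contribution demands the sharper rate
\begin{equation*}
t\int_{\mathbb S^{n-1}}{w^{(1)}(t,\theta)}^2\,\ud\theta\longrightarrow 0.
\end{equation*}
My approach is to recast this as the $0/0$ indeterminate quotient $\bigl(\int_{\mathbb S^{n-1}}{w^{(1)}}^2\,\ud\theta\bigr)/(1/t)$ and apply L'H\^opital's rule, using the ODE \eqref{ourPDEcylnon} to replace any sixth order derivative appearing after differentiation by an algebraic combination of lower order ones, and iterating the L'H\^opital step together with Lemma~\ref{lm:signproperties} until the indeterminate form resolves. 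The main obstacle is precisely this iteration: one step of L'H\^opital produces a factor of $t^2$ in front of $\int w^{(1)}w^{(2)}\,\ud\theta$, so closing the argument requires carefully exploiting that $w^{(j)}\to 0$ for \emph{every} $j\geqslant 1$ and absorbing the successive $t^k$ factors using the ODE's coefficients, in the spirit of the Aviles-type analysis of \cite{MR875297} adapted to the sixth order setting as in \cite[Section~4]{arXiv:2210.04376}.
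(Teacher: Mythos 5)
Your proposal follows essentially the same route as the paper: the same decomposition of $\Xi_{\rm rad}$ into its eleven monomials, the same use of Lemma~\ref{lm:signproperties} together with the uniform bounds of Remark~\ref{rmk:estimates} to kill every term with a bounded coefficient and at least one factor $w^{(j)}$, $j\geqslant 1$, and the same identification of $\mathfrak{p}_1{w^{(1)}}^2$ as the only delicate term, handled by splitting off the linear-in-$t$ part of $\mathfrak{p}_1$ and invoking L'H\^opital. The one place you leave open --- actually closing the L'H\^opital iteration for $t\int_{\mathbb S^{n-1}}{w^{(1)}}^2\,\ud\theta$, where a naive differentiation worsens the power of $t$ --- is exactly the point at which the paper's own proof is equally terse, so your account matches the paper's argument in both substance and level of detail.
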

	
	\begin{proof}
		Notice that 
		\begin{align*}
			\Xi_{\rm rad}(t,w)&:=I_{51}+I_{42}+I_{33}+I_{41}+I_{32}+I_{31}+I_{22}+I_{21}+I_{22}+I_{11}+I_{00}.
		\end{align*}
		Next, we estimate each term of the last identity separately by steps. 
		
		\noindent{\bf Step 1:}
		$I_{00}:=\lim_{t\rightarrow-\infty }\int_{\mathbb{S}^{n-1}}\mathfrak{p}_0(n,t)w(t,\theta)^2\ud\theta=0$.
		
		\noindent Using \eqref{estimate0}, we find that $\lim_{t\rightarrow-\infty }\mathfrak{p}_0(n,t)=0$, which yields
		\begin{align*}
			\lim_{t\rightarrow-\infty }\int_{\mathbb{S}^{n-1}}\mathfrak{p}_0(n,t)w^{(1)}(t,\theta)^2\ud\theta=\left(\lim_{t\rightarrow-\infty }\mathfrak{p}_0(n,t)\right)\left(\lim_{t\rightarrow-\infty }\int_{\mathbb{S}^{n-1}}w^{(1)}(t,\theta)^2\ud\theta\right)=0.&
		\end{align*}
		This gives us the desired conclusion.
		
		\noindent{\bf Step 2:} $I_{11}:=\lim_{t\rightarrow-\infty }\int_{\mathbb{S}^{n-1}}\mathfrak{p}_1(n,t)w^{(1)}(t,\theta)^2\ud\theta=0$.
		
		\noindent The difference in this case is $\lim_{t\rightarrow-\infty }\mathfrak{p}_1(n,t)=-\infty$.
		However, using \eqref{estimate1}, we can decompose $\mathfrak{p}_1(n,t)=\widetilde{\mathfrak{p}}_1(n,t)+\widehat{\mathfrak{p}}_1(n,t)$, where $\lim_{t\rightarrow-\infty }\widetilde{\mathfrak{p}}_1(n,t)<+\infty $ and $\widehat{\mathfrak{p}}_1(n,t)=\frac{1}{864}(n-6)(3456n^2-20736n+27648)t$.
		With this notation, we set
		\begin{align*}
			\lim_{t\rightarrow-\infty }\int_{\mathbb{S}^{n-1}}\mathfrak{p}_1(n,t)w^{(1)}(t,\theta)^2\ud\theta
			:=\widetilde{I}_{11}+\widehat{I}_{11}.
		\end{align*}
		Notice that since $\lim_{t\rightarrow-\infty }\widetilde{\mathfrak{p}}_1(n,t)=0$, we trivially see that $\widetilde{I}_{11}=0$. Moreover, in order to estimate $\widehat{I}_{11}$, we use the L'H\^ospital rule as follows $\widehat{I}_{11}=\lim_{t\rightarrow-\infty }\int_{\mathbb{S}^{n-1}}\widehat{\mathfrak{p}}_1(n,t)w(t,\theta)^2\ud\theta=0,$	which proves this step.
		
		\noindent{\bf Step 3:}
		$I_{22}:=\lim_{t\rightarrow-\infty }\int_{\mathbb{S}^{n-1}}\mathfrak{p}_2(n,t)w^{(2)}(t,\theta)^{2}\ud\theta=0.$
		
		\noindent As before, by \eqref{estimate2}, it holds that $\lim_{t\rightarrow-\infty}\mathfrak{p}_2(n,t)<\infty$ and using
		\begin{align*}
			\lim_{t\rightarrow\infty}\int_{\mathbb{S}^{n-1}}\mathfrak{p}_2(n,t)w^{(2)}(t,\theta)^2\ud\theta=\left(\lim_{t\rightarrow-\infty}\mathfrak{p}_2(n,t)\right)\left(\lim_{t\rightarrow-\infty}\int_{\mathbb{S}^{n-1}}w^{(2)}(t,\theta)^2\ud\theta\right)=0,&
		\end{align*}
		the proof of this step follows promptly.
		
		\noindent{\bf Step 4:} $I_{21}:=\lim_{t\rightarrow-\infty }\int_{\mathbb{S}^{n-1}}\mathfrak{p}_3(n,t)w^{(2)}(t,\theta)w^{(1)}(t,\theta)\ud\theta=0$.
		
		\noindent In fact, using \eqref{estimate3}, one has that  $\lim_{t\rightarrow-\infty }\mathfrak{p}_3(n,t)<+\infty$. Thus, 
		\begin{align*}
			\lim_{t\rightarrow-\infty }\int_{\mathbb{S}^{n-1}}\mathfrak{p}_3(n,t)w^{(2)}(t,\theta)w^{(1)}(t,\theta)\ud\theta=\left(\lim_{t\rightarrow-\infty }\mathfrak{p}_3(n,t)\right)\left(\lim_{t\rightarrow-\infty }\int_{\mathbb{S}^{n-1}}w^{(2)}(t,\theta)w^{(1)}(t,\theta)\ud\theta\right)=0,&
		\end{align*}
		which by Lemma~\ref{lm:signproperties}
		leads to the conclusion.
		
		\noindent{\bf Step 5:} $I_{22}:=-\lim_{t\rightarrow-\infty }\int_{\mathbb{S}^{n-1}}\mathfrak{p}_4(n,t)w^{(2)}(t,\theta)^2\ud\theta=0$.
		
		\noindent Indeed, by \eqref{estimate4}, we obtain that $\lim_{t\rightarrow-\infty }\mathfrak{p}_4(n,t)<+\infty$. Thus, 
		\begin{align*}
			-\lim_{t\rightarrow-\infty }\int_{\mathbb{S}^{n-1}}\mathfrak{p}_4(n,t)w^{(2)}(t,\theta)^2\ud\theta=-\left(\lim_{t\rightarrow-\infty }\mathfrak{p}_4(n,t)\right)\left(\lim_{t\rightarrow-\infty }\int_{\mathbb{S}^{n-1}}w^{(2)}(t,\theta)^2\ud\theta\right)=0,&
		\end{align*}
		which by Lemma~\ref{lm:signproperties} concludes the argument.
		
		\noindent{\bf Step 6:} $I_{31}:=\lim_{t\rightarrow-\infty }\int_{\mathbb{S}^{n-1}}\mathfrak{p}_4(n,t)w^{(3)}(t,\theta)w^{(1)}(t,\theta)\ud\theta=0$.
		
		\noindent Using the same argument as before, we find
		\begin{align*}
			\lim_{t\rightarrow-\infty }\int_{\mathbb{S}^{n-1}}\mathfrak{p}_4(n,t)w^{(3)}(t,\theta)w^{(1)}(t,\theta)\ud\theta=\left(\lim_{t\rightarrow-\infty }\mathfrak{p}_4(n,t)\right)\left(\lim_{t\rightarrow-\infty }\int_{\mathbb{S}^{n-1}}w^{(3)}(t,\theta)w^{(1)}(t,\theta)\ud\theta\right)=0,&
		\end{align*}
		which finishes this step.
		
		\noindent{\bf Step 7:} $I_{32}:=-\lim_{t\rightarrow-\infty }\int_{\mathbb{S}^{n-1}}\mathfrak{p}_5(n,t)w^{(3)}(t,\theta)w^{(2)}(t,\theta)\ud\theta=0$.
		
		\noindent Indeed, by \eqref{estimate5}, we obtain that $\lim_{t\rightarrow-\infty }\mathfrak{p}_4(n,t)<+\infty$. Thus, 
		\begin{align*}
			-\lim_{t\rightarrow-\infty }\int_{\mathbb{S}^{n-1}}\mathfrak{p}_5(n,t)w^{(3)}(t,\theta)w^{(2)}(t,\theta)\ud\theta=-\left(\lim_{t\rightarrow-\infty }\mathfrak{p}_5(n,t)\right)\left(\lim_{t\rightarrow-\infty }\int_{\mathbb{S}^{n-1}}w^{(3)}(t,\theta)w^{(2)}(t,\theta)\ud\theta\right)=0,&
		\end{align*}
		which by Lemma~\ref{lm:signproperties} concludes the argument.
		
		\noindent{\bf Step 8:} $I_{41}:=\lim_{t\rightarrow-\infty }\int_{\mathbb{S}^{n-1}}\mathfrak{p}_5(n,t)w^{(4)}(t,\theta)w^{(1)}(t,\theta)\ud\theta=0$.
		
		\noindent Using the same argument as before, we find
		\begin{align*}
			\lim_{t\rightarrow-\infty }\int_{\mathbb{S}^{n-1}}\mathfrak{p}_5(n,t)w^{(4)}(t,\theta)w^{(1)}(t,\theta)\ud\theta=\left(\lim_{t\rightarrow-\infty }\mathfrak{p}_5(n,t)\right)\left(\lim_{t\rightarrow-\infty }\int_{\mathbb{S}^{n-1}}w^{(4)}(t,\theta)w^{(1)}(t,\theta)\ud\theta\right)=0,&
		\end{align*}
		which finishes this step.
		
		\noindent{\bf Step 9:} $I_{33}:=\lim_{t\rightarrow-\infty }\int_{\mathbb{S}^{n-1}}w^{(3)}(t,\theta)^2\ud\theta=0$.
		
		\noindent It follows directly by Lemma~\ref{lm:signproperties}.
		
		\noindent{\bf Step 10:} $I_{42}:=\lim_{t\rightarrow-\infty }\int_{\mathbb{S}^{n-1}} w^{(4)}(t,\theta)w^{(2)}(t,\theta)\ud\theta=0$.
		
		\noindent It follows directly by Lemma~\ref{lm:signproperties}.
		
		\noindent{\bf Step 11:} $I_{51}:=\lim_{t\rightarrow-\infty }\int_{\mathbb{S}^{n-1}}w^{(5)}(t,\theta),w^{(1)}(t,\theta)\ud\theta=0$.
		
		\noindent It follows directly by Lemma~\ref{lm:signproperties}.
		
		Finally, putting together all the steps the proof is concluded.
	\end{proof}
	
	The next proposition is the monotonicity of this new Pohozaev functional.
	
	\begin{proposition}\label{lm:lowermonotonicity}
		Let $w\in C^6(\mathbb R)$ be a positive solution to \eqref{ourPDEcylnon}.
		Then,  one has 
		\begin{itemize}
			\item[{\rm (i)}] if $n=7,8$, then there exists $T_*\gg1$ such that $\widetilde{\mathcal{P}}_{\rm cyl}(t,w)$ is non-decreasing for $t>T_*$.
			\item[{\rm (ii)}] if $n\geqslant9$, then there exists $T^*\gg1$ such that $\widetilde{\mathcal{P}}_{\rm cyl}(t,w)$ is nonincreasing for $t>T^*$.\\
			Moreover, $\widetilde{\mathcal{P}}_{\rm cyl}(-\infty ,w):=\lim_{t\rightarrow-\infty }\widetilde{\mathcal{P}}_{\rm cyl}(t,w)$ exists.
		\end{itemize}
	\end{proposition}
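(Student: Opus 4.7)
The plan is to differentiate $\widetilde{\mathcal{P}}_{\rm cyl}$, identify the dominant contribution as $t\to -\infty$, and pin down its sign. Starting from $\partial_{t}\widetilde{\mathcal{P}}_{\rm cyl}(t,w)=\Xi_{\rm rad}(t,w)+\Xi_{\rm ang}(t,w)$ displayed in \eqref{angularerror}, I would trace the dichotomy between $n\in\{7,8\}$ and $n\geqslant 9$ back to a sign change in the asymptotic coefficient of the $(w^{(2)})^{2}$ term in $\partial_{t}\widetilde{\mathcal{H}}_{\rm rad}$.

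First, I would dispose of the angular part. Every summand of $\Xi_{\rm ang}$ contains at least two angular derivative factors of $w$, so the exponential decay \eqref{angularestimates} combined with the uniform radial bound \eqref{radialestimates} gives $|\Xi_{\rm ang}(t,w)|\leqslant C|t|e^{2\beta t}$, which is exponentially small as $t\to -\infty$ and can therefore be neglected against any polynomial rate.

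Next, I would analyze $\Xi_{\rm rad}$ by grouping terms according to the radial derivatives they multiply. Using \eqref{estimate0}--\eqref{estimate5}, the only coefficients admitting a finite nonzero limit are $\mathfrak{p}_{2},\mathfrak{p}_{3},\mathfrak{p}_{4},\mathfrak{p}_{5}$, while $\mathfrak{p}_{0}\to 0$ and $\mathfrak{p}_{1}$ grows linearly in $t$. Collecting the two $(w^{(2)})^{2}$ contributions, the effective coefficient is $(\mathfrak{p}_{2}-\mathfrak{p}_{4})(n,-\infty)$, which a direct computation shows to be strictly positive for $n\in\{7,8\}$ and strictly negative for $n\geqslant 9$; this is the source of the dichotomy. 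The mixed products $w^{(i)}w^{(j)}$ with $i\neq j$ can be absorbed via Young's inequality into a small multiple of $\int_{\mathbb{S}^{n-1}}[(w^{(1)})^{2}+(w^{(2)})^{2}+(w^{(3)})^{2}]\ud\theta$, while the linearly growing term $\mathfrak{p}_{1}(w^{(1)})^{2}$ is handled by the same L'H\^opital reduction used in Step~2 of the proof of Lemma~\ref{lm:estimateangularparts}. Executing this separation of scales cleanly requires sharp decay rates for the $w^{(j)}$'s, which I would obtain by bootstrapping on the asymptotic ODE associated with Lemma~\ref{lm:signproperties}.

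Combining the two steps pins down $\sgn(\partial_{t}\widetilde{\mathcal{P}}_{\rm cyl}(t,w))=\sgn\bigl[(\mathfrak{p}_{2}-\mathfrak{p}_{4})(n,-\infty)\bigr]$ in a neighborhood of $-\infty$, which yields the two claimed monotonicity statements (i) and (ii). Existence of $\widetilde{\mathcal{P}}_{\rm cyl}(-\infty,w)$ then follows at once from monotonicity together with the uniform boundedness of the functional inherited from the a priori estimates of Section~\ref{sec:upperbounds}. The main obstacle I anticipate is precisely the last part of the second step: verifying that the mixed and $\mathfrak{p}_{1}$-generated remainders cannot cancel the leading $(\mathfrak{p}_{2}-\mathfrak{p}_{4})(w^{(2)})^{2}$ term, which ultimately requires genuinely linearizing \eqref{ourPDEcylnon} around the equilibrium $\widehat{K}_{0}(n)^{(n-6)/6}$ identified in Lemma~\ref{lm:signproperties} and controlling the spectrum of the resulting non-autonomous sixth order operator.
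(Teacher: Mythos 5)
Your skeleton (differentiate $\widetilde{\mathcal{P}}_{\rm cyl}$, discard the angular part via \eqref{angularestimates}, locate the one radial term whose sign survives as $t\to-\infty$, and then get the limit from monotonicity plus boundedness) is the same as the paper's, and your treatment of $\Xi_{\rm ang}$ and of the final limit statement is fine. However, you identify the wrong dominant term, and this is a genuine gap. By Lemma~\ref{lm:signproperties} every derivative $w^{(j)}$ with $j\geqslant 1$ tends to $0$ as $t\to-\infty$, while $w$ itself tends to a constant $w_0\in\{0,\widehat{K}_0(n)^{\frac{n-6}{6}}\}$ which is \emph{nonzero} in the singular case. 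Consequently your proposed leading contribution $(\mathfrak{p}_2-\mathfrak{p}_4)(n,-\infty)\int_{\mathbb{S}^{n-1}}(w^{(2)})^2\,\ud\theta$ has a bounded coefficient multiplying a quantity that vanishes, so it is asymptotically negligible; knowing its sign tells you nothing about $\sgn(\partial_t\widetilde{\mathcal{P}}_{\rm cyl})$ unless you also prove it dominates every other term, and it does not. The term the paper isolates is $\mathfrak{p}_0(n,t)|w|^2$: by \eqref{estimate0} one has $\mathfrak{p}_0(n,t)=\tfrac{1}{36}(3n^3-48n^2+228n-320)t^{-2}+\mathcal{O}(t^{-4})$, and since $|w|^2\to|w_0|^2\neq 0$ this is the unique term with a definite, slowly decaying ($\sim t^{-2}$) sign, whereas the derivative-squared terms decay strictly faster. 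The dichotomy is exactly the sign of the cubic $3n^3-48n^2+228n-320$, which is negative for $n=7,8$ and positive for $n\geqslant 9$.

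That your criterion produces the same cutoff between $n=8$ and $n=9$ is a coincidence: $(\mathfrak{p}_2-\mathfrak{p}_4)(n,-\infty)=-(3n^3-48n^2+228n-320)-\tfrac12(3n^2-42n+124)$ is dominated by the negative of the same cubic, so its sign flips at the same dimension, but with the opposite orientation and attached to the wrong quantity. To repair the argument you would either have to switch to the $\mathfrak{p}_0|w|^2$ term as the paper does, or carry out in full the spectral/linearization analysis you only sketch at the end (sharp decay rates for $w^{(j)}$ around the equilibrium), which is precisely the step your proposal leaves open and which, once done, would show your candidate term is subdominant rather than dominant.
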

	
	\begin{proof}
		Initially, using Remark~\ref{rmk:estimates} and Lemma~\ref{lm:signproperties} one can find $t_0\gg1$ such that $\operatorname{sign}(\partial_t\widetilde{\mathcal{H}}_{{\rm rad}}(t,w))=\operatorname{sign}(\mathfrak{p}_0(n,t)|w|^2)$ for $|t|>t_0$. 
		Besides, we directly verify for $n\geqslant9$ that there exists $t_1\gg1$ sufficiently large such that 
		$\mathfrak{p}_0(n,t)<0$ for $|t|>t_1$, which, by taking $T^*:=\max\{t_1,t_0\}$, implies that $\widetilde{\mathcal{P}}_{\rm cyl}(t,w)$ is nonincreasing for $|t|>T^*$.
		However, for $n=7,8$, there exists $t_1\gg1$ sufficiently large such that 
		$\mathfrak{p}_0(n,t)<0$ for $|t|>t_2$; thus, setting $T_*:=\max\{t_0,t_2\}$, we get that $\widetilde{\mathcal{P}}_{\rm cyl}(t,w)$ is nondecreasing for $|t|>T_*$.
		Hence, the existence of $\widetilde{\mathcal{P}}_{\rm cyl}(-\infty ,w)$ follows  since $\widetilde{\mathcal{P}}_{\rm cyl}(t,w)$ is uniformly bounded both from above and below as $t \rightarrow -\infty $, and, by \eqref{angularestimates}, it gives us $\lim_{t\rightarrow-\infty }\Xi_{\rm ang}(t,w)=0$,	where $\Xi_{\rm ang}$ is defined by \eqref{angularerror}.
		This combined with \eqref{radialestimates} yields that $\liminf_{t \rightarrow -\infty} \widetilde{\mathcal{P}}_{\rm cyl}(t,w)<+\infty$.
		The proof is concluded.
	\end{proof}
	
	\section{A priori upper bounds}\label{sec:upperbounds}
	This subsection is devoted to provide a priori upper bounds for positive singular solutions to \eqref{ourPDE} with $p\in(1,2^{\#}-1)$.
	Our strategy relies on the classification of the limit solutions to \eqref{ourPDE} combined with a blow-up argument. 
	When $p\in(2_{\#},2^{\#}-1)$, most of our arguments in this section are similar in spirit to the ones in \cite{arXiv:2210.04619}, so we omit them here.
	Nevertheless, for $p=2_{\#}$, our proof is completely new and based on \cite{MR1436822}.
	Notice that by scaling invariance, we may assume $R=1$ without loss of generality. 
	Also, all constants in this section depend only on $n$ and $p$.
	
	\begin{lemma}\label{lm:upperestimatenonsingular}
		Let $u\in C^{6}(B_{1}) \cap$ $C(\bar{B}_{1})$ be a positive non-singular solution to \eqref{ourPDE} with $p\in(1,2^{\#}-1)$ and $R=1$. 
		Then,
		there exists $C>0$ such that
		\begin{equation}\label{upperestimatenonsingular}
			u(x)\leqslant C(1-|x|)^{-\gamma_p} \quad {\rm in} \quad B_{1}.
		\end{equation}
	\end{lemma}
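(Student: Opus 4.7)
The plan is to argue by contradiction via a blow-up/rescaling procedure combined with a doubling lemma and a Liouville-type classification of positive entire solutions to the tri-harmonic equation in the subcritical range, in the spirit of the approach of \cite{arXiv:2210.04619}.

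First, assuming that \eqref{upperestimatenonsingular} fails, I would obtain a sequence of points $x_k\in B_1$ with $M_k:=u(x_k)^{1/\gamma_p}(1-|x_k|)\rightarrow+\infty$. Applying the Pol\'a\v cik--Quittner--Souplet doubling lemma to the function $M(x):=u(x)^{1/\gamma_p}$ with respect to the distance $d(x)=1-|x|$, one produces (after relabeling) a sequence, still denoted $x_k$, satisfying $M(x_k)(1-|x_k|)\rightarrow+\infty$ together with the localized upper bound
\begin{equation*}
	M(y)\leqslant 2M(x_k)\quad\text{whenever}\quad |y-x_k|\leqslant k/M(x_k).
\end{equation*}

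Second, I would set $\lambda_k:=M(x_k)^{-1}=u(x_k)^{-1/\gamma_p}$ and introduce the rescaling $v_k(y):=\lambda_k^{\gamma_p}u(x_k+\lambda_k y)$ on the ball $B_k(0)$. Because the exponent $\gamma_p=6/(p-1)$ is tuned precisely to the scale invariance of \eqref{ourPDE}, the function $v_k$ solves $(-\Delta)^3v_k=f_p(v_k)$ on $B_k(0)$ and satisfies the normalization $v_k(0)=1$ together with the uniform bound $v_k\leqslant 2^{\gamma_p}$. In particular, $v_k$ remains strictly inside $B_1$ since $\lambda_k k\rightarrow 0$.

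Third, standard interior $L^q$ estimates and Schauder theory for the polyharmonic operator, applied to the sequence $\{v_k\}$ bounded in $L^\infty_{\rm loc}(\mathbb{R}^n)$, yield uniform bounds in $C^6_{\rm loc}(\mathbb{R}^n)$. A diagonal extraction gives a subsequence converging in $C^5_{\rm loc}$ to an entire function $v_\infty\in C^6(\mathbb{R}^n)$ solving $(-\Delta)^3v_\infty=f_p(v_\infty)$ on $\mathbb{R}^n$, with $v_\infty(0)=1$; by continuity $v_\infty\geqslant 0$. The superharmonicity and bi-superharmonicity propagate trivially under the rescaling, so $-\Delta v_\infty\geqslant 0$ and $\Delta^2v_\infty\geqslant 0$ are also inherited. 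Invoking the Liouville-type theorem for subcritical tri-harmonic equations in the range $p\in(1,2^{\#}-1)$, as established for example in \cite{arXiv:2210.04619,MR4438901}, one concludes that $v_\infty\equiv 0$, contradicting $v_\infty(0)=1$.

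The main obstacle will be the endpoint case $p=2_{\#}$, where Theorem~\ref{thm:luo-wei}(a) rules out only nontrivial homogeneous solutions; a full Liouville statement for \emph{arbitrary} positive entire solutions at the lower critical exponent is needed, and I would rely on the extended version in \cite{arXiv:2210.04619} (which removes the super-polyharmonic assumption and covers the entire subcritical range) to close the argument uniformly. Once this Liouville input is secured, the rescaling-compactness scheme above produces the desired upper bound.
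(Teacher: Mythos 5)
Your proposal is correct and matches the intended argument: the paper does not prove this lemma itself but simply cites \cite[Lemma~3.1]{arXiv:2210.04619}, and that result is established by exactly the rescaling--doubling-lemma--Liouville scheme you describe, with the key Liouville input for \emph{arbitrary} positive entire solutions in the full range $p\in(1,2^{\#}-1)$ supplied by the super-polyharmonic property proved in \cite{arXiv:2210.04619,MR4438901}. Two small slips worth fixing: the sentence about superharmonicity ``propagating trivially'' is vacuous here, since this lemma (unlike several others in the paper) does not assume $-\Delta u\geqslant 0$ or $\Delta^2 u\geqslant 0$, so there is nothing to propagate --- fortunately the Liouville theorem you invoke does not require these hypotheses; and $\lambda_k k$ need not tend to $0$ --- the containment $B_{k\lambda_k}(x_k)\subset B_1$ follows instead from the doubling lemma's conclusion $M(x_k)\,d(x_k)>2k$.
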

	
	\begin{proof}
		See \cite[Lemma~3.1]{arXiv:2210.04619}.
	\end{proof}
	
	Using the last lemma, we can prove the auxiliary result below
	\begin{lemma}\label{lm:universalestimates}
		Let $u\in C^{6}(B_1^*)$ be a positive singular solution to \eqref{ourPDE} with $p\in(1,2^{\#}-1)$ and $R=1$. 
		Then, there exists $C>0$ such that
		\begin{equation*}
			|u(x)| \leqslant C|x|^{-\gamma_p} \quad {\rm in} \quad B^*_{1/2}.
		\end{equation*}
	\end{lemma}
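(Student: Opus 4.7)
The plan is to derive the desired bound by a standard rescaling-to-non-singular argument, using Lemma~\ref{lm:upperestimatenonsingular} as a black box. The key is that the exponent $\gamma_{p}=\frac{6}{p-1}$ is precisely the scaling exponent for the equation \eqref{ourPDE}: if $u$ solves $(-\Delta)^3 u = u^p$, then so does $v(y):=r^{\gamma_p}u(x_0+ry)$ on any rescaled domain not containing the singularity. I would exploit this to reduce the pointwise singular bound to the non-singular bound \eqref{upperestimatenonsingular}.

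Concretely, fix $x_0\in B^{*}_{1/2}$ and set $r_0:=|x_0|/2$. Since $B_{r_0}(x_0)\subset B_{1}\setminus\{0\}$, the function $u$ is a smooth positive classical solution of \eqref{ourPDE} on $B_{r_0}(x_0)$. Define
\begin{equation*}
v(y):=r_0^{\gamma_p}\,u(x_0+r_0y), \qquad y\in B_1.
\end{equation*}
A direct computation using the chain rule together with $\gamma_p+6=\gamma_p\cdot p$ shows that $v\in C^6(B_1)\cap C(\bar B_1)$ is a positive non-singular solution to $(-\Delta)^3 v = v^p$ on $B_1$. Applying Lemma~\ref{lm:upperestimatenonsingular} to $v$ at the origin gives
\begin{equation*}
v(0)\leqslant C(1-0)^{-\gamma_p}=C,
\end{equation*}
which, after unwinding the rescaling, yields $u(x_0)\leqslant C\,r_0^{-\gamma_p}=C\,2^{\gamma_p}|x_0|^{-\gamma_p}$. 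Since $x_0\in B^{*}_{1/2}$ was arbitrary, this is the claimed estimate.

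The only points requiring care are (a) verifying that $v$ is genuinely non-singular on $B_1$, which follows from $r_0=|x_0|/2$ so that $x_0+r_0y$ stays at distance at least $|x_0|/2>0$ from the origin, and (b) checking the scaling invariance of the tri-Laplacian equation, which is a routine computation and the reason $\gamma_p$ is defined as it is. There is no substantive obstacle here: the result is really a corollary of the non-singular case via the doubling trick, and all the work has been done in Lemma~\ref{lm:upperestimatenonsingular}.
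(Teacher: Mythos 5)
Your proposal is correct and coincides with the paper's own argument: the paper likewise fixes $x_0\in B^*_{1/2}$, sets $r=\tfrac{1}{2}|x_0|$, rescales to $\widetilde u_r(x)=r^{\gamma_p}u(rx+x_0)$ on $\bar B_1$, and applies Lemma~\ref{lm:upperestimatenonsingular} at $x=0$. No substantive difference.
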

	
	\begin{proof}
		Fixing $x_{0} \in B^*_{1/2}$, let us define $r=\frac{1}{2}|x_{0}|$.
		Then, since $\bar{B}_{r}\left(x_{0}\right) \subset B^*_{1}$, it is well-defined the rescaled function given by 
		\begin{equation*}
			\widetilde{u}_r(x)=r^{\gamma_p}u\left(rx+x_{0}\right) \quad \mbox{in} \quad \bar{B}_{1}.
		\end{equation*}
		Since $u$ is a positive singular solution to \eqref{ourPDE}, we obtain that $\widetilde{u}_r$ is a positive non-singular solution to
		\begin{equation*}
			(-\Delta)^3\widetilde{u}_r=f_p(\widetilde{u}_r) \quad \mbox{in} \quad B_1,
		\end{equation*}
		which is continuous up to the boundary.
		Therefore, we can apply Lemma~\ref{lm:upperestimatenonsingular} to $\widetilde{u}_r$, which, by taking $x=0$ in \eqref{upperestimatenonsingular}, provides $|\widetilde{u}_r(0)|\leqslant C$. 
		Hence, by rewriting in terms of $u$, we get $|{u}(x_{0})| \leqslant C r^{-\gamma_p}$.
		At last, since $x_{0} \in B^*_{1/2}$ is arbitrary and $r=\frac{1}{2}|x_{0}|$, the proof is finished.
	\end{proof}
	
	As a consequence, we have the following Harnack-type estimate
	\begin{corollary}\label{cor:universalestimates}
		Let $u\in C^{6}(B_1^*)$ be a positive singular solution to \eqref{ourPDE} with $p\in(1,2^{\#}-1)$ and $R=1$.
		Then,
		there exists $C>0$ (depending only on $n$ and $p$) such that
		\begin{equation*}
			\sum_{j=0}^{5}|x|^{\gamma_p+j}\left|D^{(j)} u(x)\right| \leqslant C \quad {\rm in} \quad B^*_{1/2}.
		\end{equation*}
	\end{corollary}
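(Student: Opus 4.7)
The plan is to combine the pointwise bound of Lemma~\ref{lm:universalestimates} with a scaling argument and interior regularity for the sixth order equation. Fix an arbitrary $x_0\in B^*_{1/2}$ and set $r=\tfrac{1}{2}|x_0|$, so that $\bar B_r(x_0)\subset B_1^*$. Consider the rescaled function
\begin{equation*}
\widetilde u_r(y):=r^{\gamma_p}u(ry+x_0), \qquad y\in\bar B_1,
\end{equation*}
which, by the scale-invariance of the nonlinearity $f_p$, solves $(-\Delta)^3\widetilde u_r=f_p(\widetilde u_r)$ in $B_1$ and is continuous up to the boundary. Applying Lemma~\ref{lm:universalestimates} on the ball $B_r(x_0)\subset B_1^*$ yields
\begin{equation*}
\|\widetilde u_r\|_{L^\infty(B_1)}\leqslant C,
\end{equation*}
with $C$ depending only on $n$ and $p$ (since $|ry+x_0|\simeq |x_0|$ for $y\in B_1$).

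Next I would invoke standard interior regularity for the poly-harmonic operator. Since $\widetilde u_r$ is bounded on $B_1$ and $p<2^{\#}-1<\infty$, the right-hand side $f_p(\widetilde u_r)\in L^\infty(B_1)$, so $L^q$-theory for $(-\Delta)^3$ gives $\widetilde u_r\in W^{6,q}_{\rm loc}(B_1)$ for every $q<\infty$; in particular Sobolev embedding gives $\widetilde u_r\in C^{5,\alpha}_{\rm loc}(B_1)$ for some $\alpha\in(0,1)$. A one-step bootstrap (using that $f_p$ is locally Lipschitz on bounded ranges) upgrades this to uniform $C^{5}(\bar B_{1/2})$ estimates of the form
\begin{equation*}
\sum_{j=0}^5\|D^{(j)}\widetilde u_r\|_{L^\infty(B_{1/2})}\leqslant C,
\end{equation*}
where $C$ depends only on $n$, $p$, and the uniform $L^\infty$ bound on $\widetilde u_r$, hence only on $n$ and $p$.

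Finally I would undo the scaling. By the chain rule, $D^{(j)}\widetilde u_r(y)=r^{\gamma_p+j}D^{(j)}u(ry+x_0)$, so evaluating at $y=0$ yields
\begin{equation*}
r^{\gamma_p+j}|D^{(j)}u(x_0)|\leqslant C \quad\text{for}\quad j=0,1,\dots,5.
\end{equation*}
Since $r=\tfrac12|x_0|$, this is precisely the claimed bound at the point $x_0$, and as $x_0\in B^*_{1/2}$ was arbitrary, the corollary follows.

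The only delicate point is the interior regularity step for the sixth order operator; I would appeal to the Calder\'on--Zygmund theory for $(-\Delta)^3$ on $B_1$ (see e.g.\ the standard treatment for the poly-Laplacian), together with Schauder estimates, which are entirely classical once one knows $f_p(\widetilde u_r)\in L^\infty$ uniformly. All other steps reduce to the scale invariance of \eqref{ourPDE} and the pointwise bound already obtained in Lemma~\ref{lm:universalestimates}.
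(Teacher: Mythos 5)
Your argument is correct, and it is genuinely different from what the paper does: the paper does not prove this corollary at all, it simply cites \cite[Theorem~1.1]{arXiv:2210.04619}. Your proposal supplies the standard self-contained derivation — rescale about $x_0$ with $r\simeq|x_0|$, use the pointwise bound of Lemma~\ref{lm:universalestimates} to get a uniform $L^\infty$ bound on the rescaled solution, apply interior $L^q$/Schauder theory for $(-\Delta)^3$ to get uniform $C^{5}$ bounds on a half-ball, and undo the scaling. This is exactly the mechanism one expects behind the cited result, and all the ingredients (scale invariance of $f_p$, boundedness of $f_p(\widetilde u_r)$, classical poly-harmonic regularity) are in place; the gain of your route is that the corollary no longer rests on an external preprint, at the cost of invoking Calder\'on--Zygmund theory for the tri-Laplacian explicitly.

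One small bookkeeping point you should fix: with $r=\tfrac12|x_0|$ the points $ry+x_0$, $y\in B_1$, satisfy $|ry+x_0|\leqslant\tfrac32|x_0|$, which can exceed $\tfrac12$, so Lemma~\ref{lm:universalestimates} as stated (which gives the bound only on $B^*_{1/2}$) does not literally cover all of $B_r(x_0)$. This is harmless: either take $r=\tfrac14|x_0|$, or observe that the proof of Lemma~\ref{lm:universalestimates} (choosing $r=\tfrac12|x_0|$ there) in fact yields $|u(x)|\leqslant C|x|^{-\gamma_p}$ on $B^*_{2/3}$, which suffices. With that adjustment the uniform $L^\infty$ bound on $\widetilde u_r$ over $B_1$, and hence the whole argument, goes through.
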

	
	\begin{proof}
		See \cite[Theorem~1.1]{arXiv:2210.04619}.
	\end{proof}
	
	The last lemma is a Harnack-type inequality
	\begin{lemma}\label{lm:gradientestimates}
		Let $u\in C^{6}(B_1^*)$ be a positive singular solution to \eqref{ourPDE} with $p\in(1,2^{\#}-1)$ and $R=1$.
		Assume that $-\Delta u\geqslant0$ and $\Delta^2u\geqslant0$.
		Then, there exist $c_1,c_2>0$ and $r_0\in(0,1/16)$ such that
		\begin{equation*}
			\sup_{B_{r/2}\setminus \bar{B}_{3r / 2}} u\leqslant c_1\inf _{B_{r/2}\setminus\bar{B}_{3r/2}} u \quad {\rm in} \quad B^*_{1/16}.
		\end{equation*}
		Moreover, the constant $c_1>0$ depends only on $n$, and $p$.
	\end{lemma}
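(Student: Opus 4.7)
\medskip
\noindent\textbf{Proof plan.} The strategy is a scaling-and-compactness argument that reduces the claim to the strong maximum principle for superharmonic functions on a fixed annulus.

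For each $r\in(0,1/16)$ I rescale by setting $\widetilde u_r(x):=r^{\gamma_p}u(rx)$, which is defined in a neighborhood of the fixed annulus $A:=\{x: 1/2<|x|<3/2\}$. The scale-invariance of \eqref{ourPDE} shows that $\widetilde u_r$ is again a positive singular solution of the same equation on $B_{1/r}^*$, and the super-polyharmonic inequalities $-\Delta\widetilde u_r\geqslant 0$, $\Delta^2\widetilde u_r\geqslant 0$ persist. Corollary~\ref{cor:universalestimates} applied to $\widetilde u_r$ on $\bar A$ produces a uniform bound $\|\widetilde u_r\|_{C^5(\bar A)}\leqslant C$ with $C$ independent of $r$. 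The inequality on $B_{3r/2}\setminus\bar B_{r/2}$ is then equivalent to a Harnack inequality for $\{\widetilde u_r\}_r$ on $A$ with constant uniform in $r\in(0,r_0)$.

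I would argue by contradiction. Assume there exist $r_k\downarrow 0$ and $\xi_k,\eta_k\in A$ with $\widetilde u_{r_k}(\xi_k)/\widetilde u_{r_k}(\eta_k)\to\infty$. By Arzelà-Ascoli, after passing to a subsequence, $\widetilde u_{r_k}\to U$ in $C^5(\bar A)$, $\xi_k\to\xi_\infty$, $\eta_k\to\eta_\infty\in\bar A$; since $\widetilde u_{r_k}(\xi_k)$ is uniformly bounded, necessarily $U(\eta_\infty)=0$. If $U\not\equiv 0$ on the connected open set $A$, then $U\geqslant 0$ is superharmonic on $A$ (because $-\Delta U\geqslant 0$ passes to the limit), so the strong maximum principle forces $U>0$ on $A$, contradicting $U(\eta_\infty)=0$. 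If instead $U\equiv 0$ on $A$, I renormalize by $M_k:=\max_{\bar A}\widetilde u_{r_k}\to 0$ and $v_k:=\widetilde u_{r_k}/M_k$, so that $\max_{\bar A}v_k=1$, $v_k\geqslant 0$, $-\Delta v_k\geqslant 0$, and
\begin{equation*}
(-\Delta)^3 v_k=M_k^{p-1}v_k^p,
\end{equation*}
whose right-hand side converges uniformly to $0$. Interior $L^q$-regularity for the linear triharmonic operator on a slightly enlarged annulus, followed by Sobolev embedding, supplies uniform $C^{5,\alpha}(\bar A)$-bounds on $v_k$, so $v_k\to V$ in $C^5(\bar A)$ with $\max_{\bar A}V=1$, $-\Delta V\geqslant 0$, and $V(\eta_\infty)=0$. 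The strong maximum principle applied to the superharmonic $V$ forces $V\equiv 0$, contradicting $\max_{\bar A} V=1$.

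The main obstacle I anticipate is the degenerate scenario $U\equiv 0$: the $C^5$-bound on $\widetilde u_{r_k}$ does not transfer to $v_k$ by dividing by $M_k\to 0$, so one must obtain the regularity of $v_k$ intrinsically, by feeding the bound $v_k\leqslant 1$ and the vanishing forcing $M_k^{p-1}v_k^p\to 0$ into elliptic regularity for $(-\Delta)^3$ on a slightly larger annulus $A'\Supset\bar A$ (where Corollary~\ref{cor:universalestimates} still applies to $\widetilde u_r$, providing the needed cushion). Once this is in place, both cases reduce to the strong maximum principle on a connected set, and the resulting constant $c_1$ depends only on $n$ and $p$ through the universal constants of Lemma~\ref{lm:universalestimates} and Corollary~\ref{cor:universalestimates} and the fixed geometry of $A$.
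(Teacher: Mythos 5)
The paper does not actually prove this lemma; it simply cites \cite[Proposition~4.2]{arXiv:2210.04619}, so your self-contained rescaling-and-compactness argument is a genuinely different (and more transparent) route. The skeleton is sound: the scale invariance of \eqref{ourPDE}, the uniform $C^5$ bound on the rescalings from Corollary~\ref{cor:universalestimates}, and the reduction to the strong minimum principle for superharmonic functions are exactly the right ingredients, and your identification of the degenerate case $U\equiv 0$ as the main obstacle, together with the renormalization $v_k=\widetilde u_{r_k}/M_k$, is correct.

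Two points need tightening before the argument closes. First, the strong minimum principle only forces $U\equiv 0$ (resp.\ $V\equiv 0$) if the vanishing point $\eta_\infty$ is \emph{interior} to the connected open set on which the limit is superharmonic; since $\eta_k\in A$ may converge to a point of $\partial A$, you must run the entire compactness argument on a strictly larger annulus $A'\Supset\bar A$ (available because $\widetilde u_r$ is controlled on $B^*_{1/(2r)}\supset \bar{A'}$ for $r<r_0$ small), so that $\eta_\infty\in\bar A$ is automatically interior to $A'$. You invoke the larger annulus only for the elliptic regularity of $v_k$, not for the maximum-principle step, where it is equally essential. Second, with $M_k:=\max_{\bar A}\widetilde u_{r_k}$ the bound $v_k\leqslant 1$ holds only on $\bar A$, so the interior $L^q$ estimates on $A'$ have no uniform input on $A'\setminus\bar A$; normalize instead by $\max_{\bar{A'}}\widetilde u_{r_k}$ (or by $\widetilde u_{r_k}(\eta_k)$ plus the part of the Harnack chain already established), which costs nothing since the contradiction $V\equiv 0$ versus $\max_{\bar{A'}}V=1$ is unchanged. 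A final cosmetic remark: you should also dispose of sequences $r_k\to r_*>0$, which follows trivially from the continuity and positivity of $u$ on compact subsets of $B_1^*$, or simply by noting that the lemma only asserts the inequality for $r<r_0$ with $r_0$ small. With these repairs the proof is complete and the constant visibly depends only on $n$ and $p$.
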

	
	\begin{proof}
		See \cite[Proposition~4.2]{arXiv:2210.04619}.
	\end{proof}
	
	We also establish some auxiliary results about upper bounds near the isolated singularity. 
	\begin{lemma}\label{lm:uniformestimategidasspruck}
		Let $u\in C^{6}(B_1^*)$ be a positive singular solution to \eqref{ourPDE} with $p\in(2_{\#},2^{\#}-1)$ and $R=1$, and $v=\mathfrak{F}^{-1}(u)$ be its autonomous Emden--Fowler transformation given by \eqref{cyltransform}.
		Then, there exists $C>0$ such that
		\begin{equation*}
			|v|+|v^{(1)}|+|v^{(2)}|+|v^{(3)}|+|v^{(4)}|+|v^{(5)}|+|\nabla_{\theta}v|+|\Delta_{\theta}v|+|\nabla_{\theta}\Delta_{\theta}v|+|\Delta^2_{\theta}v|\leqslant C \quad {\rm in} \quad \mathcal{C}_{\ln2}.	\end{equation*}
	\end{lemma}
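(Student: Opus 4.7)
The strategy is to translate the Euclidean pointwise bounds of Corollary~\ref{cor:universalestimates} into cylindrical coordinates directly via the explicit change of variables. Recall $v(t,\theta) = e^{\gamma_p t} u(e^t \theta)$ with $r = e^t$ and $\sigma = \theta$, so that $\partial_t = r\partial_r$ while the tangential derivatives on $\mathbb{S}^{n-1}$ in $\theta$ and $\sigma$ coincide identically. Thus every partial derivative of $v$ that appears on the left-hand side of the lemma is an explicit linear combination, with coefficients depending only on $n,p$, of weighted Euclidean derivatives of $u$.

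More precisely, a repeated application of the Leibniz rule yields, for every $0 \leq j \leq 5$, an identity of the form
\[
\partial_t^{(j)} v(t,\theta) = \sum_{k=0}^{j} c_{j,k}(n,p)\, r^{\gamma_p+k}\, \partial_r^{(k)} u(r,\theta),
\]
with constants $c_{j,k}(n,p)$ depending only on the indicated parameters. The purely angular derivatives satisfy simpler identities: $\nabla_\theta v = r^{\gamma_p}\nabla_\theta u$, $\Delta_\theta v = r^{\gamma_p}\Delta_\theta u$, $\nabla_\theta\Delta_\theta v = r^{\gamma_p}\nabla_\theta\Delta_\theta u$, and $\Delta_\theta^2 v = r^{\gamma_p}\Delta_\theta^2 u$. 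Because the spherical gradient and Laplacian involve only tangential Euclidean derivatives rescaled by appropriate powers of $r$, each of these is in turn dominated by $C\, r^{\gamma_p + k}|D^{(k)} u|$ with $k \in \{1,2,3,4\}$, respectively.

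The proof then concludes by invoking Corollary~\ref{cor:universalestimates}: for every $0 \leq k \leq 5$ one has $|x|^{\gamma_p + k}|D^{(k)}u(x)| \leq C$ in $B^*_{1/2}$. Plugging these bounds into the identities above, every term on the left-hand side of the lemma is controlled by a constant depending only on $n$ and $p$, uniformly in $(t,\theta)$ ranging over the corresponding cylindrical region $\mathcal{C}_{\ln 2}$. There is no substantial analytic obstacle here; the lemma is essentially an algebraic consequence of the a priori bounds once the chain-rule identities are laid out. The only bookkeeping to be careful about is verifying that the powers of $r$ produced by the iterated chain rule align exactly with the orders of the Euclidean derivatives, so that the weighted estimates of Corollary~\ref{cor:universalestimates} can be applied term by term, which follows by a straightforward induction on the order of differentiation.
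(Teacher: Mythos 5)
Your proposal is correct and follows essentially the same route as the paper: both arguments reduce the cylindrical derivative bounds, via the chain-rule identities $\partial_t = r\partial_r$ and the tangential identifications, to the weighted Euclidean estimates $\sum_{j=0}^{5}|x|^{\gamma_p+j}|D^{(j)}u(x)|\leqslant C$ of Corollary~\ref{cor:universalestimates} (with the bound on $v$ itself coming from Lemma~\ref{lm:universalestimates}). The paper's proof is exactly this term-by-term translation, finished by the same rescaling/bookkeeping remark you make at the end.
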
 
	
	\begin{proof}
		First, by Lemma~\ref{lm:universalestimates}, we know that $v \in C^{6}(\mathcal{C}_T)$ is uniformly bounded. 
		Moreover, using Corollary~\ref{cor:universalestimates}, one can find $C>0$ such that
		\begin{align*}
			|v^{(1)}|+|\nabla_{\theta} v|& \leqslant C \sum_{j=0}^{1}|x| ^{\gamma_p}|D^{(j)} u(x)| \leqslant C, \\
			|v^{(2)}|+|\Delta_{\theta} v| & \leqslant C \sum_{j=0}^{2}|x|^{\gamma_p+j}|D^{(j)}u(x)| \leqslant C, \\
			|v^{(3)}|+|\nabla_{\theta}\Delta_{\theta} v|& \leqslant C \sum_{j=0}^{4}|x|^{\gamma_p+j}|D^{(j)} u(x)| \leqslant C,\\
			|v^{(4)}|+|\Delta^2_{\theta} v| & \leqslant C \sum_{j=0}^{4}|x|^{\gamma_p+j}|D^{(j)}u(x)| \leqslant C, \\
			|v^{(5)}|& \leqslant C \sum_{j=0}^{5}|x|^{\gamma_p+j}|D^{(j)} u(x)| \leqslant C,\\
		\end{align*}
		for $0<|x|<1/2$, which by a direct rescaling argument proves the lemma.
	\end{proof}
	
	In the lower critical case, we have the asymptotic upper bound below.
	Our approach to proving this result follows the same lines of \cite[Theorem~5]{MR1436822}.
	Nonetheless, we write a short proof for the convenience of the reader.
	\begin{lemma}\label{lm:sharpestimate}
		Let $u\in C^{6}(\mathbb R^n\setminus\{0\})$ be a positive  singular solution to \eqref{ourPDE} with $p=2_{\#}$.
		Assume that $-\Delta u\geqslant0$ and $\Delta^2u\geqslant0$.
		Then, there exist $C_0(n)>0$ and $0<r_0<R$ such that
		\begin{equation*}
			|\overline{u}(x)|\leqslant C_0(n)|x|^{6-n}(\ln|x|)^{\frac{6-n}{6}} \quad {\rm for} \quad 0<|x|<r_0.
		\end{equation*}
	\end{lemma}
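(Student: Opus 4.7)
The plan is to adapt Soranzo's bootstrap argument \cite{MR1436822} for the fourth-order case to the sixth-order setting. First I would reduce to the radial problem: the superharmonicity hypotheses pass to the spherical average $\bar u$, and Jensen's inequality (using $p=2_{\#}>1$) yields
\[
(-\Delta)^3\bar u\geqslant \bar u^{2_{\#}}\quad\text{in}\quad (0,R),
\]
so $\bar u$ is a positive radial function with $-\Delta\bar u,\ \Delta^{2}\bar u\geqslant 0$ satisfying the above sixth-order ordinary differential inequality.

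Next I would establish an integral representation of $\bar u$ by inverting the radial Laplacian three times via the identity $-\Delta f=-r^{1-n}(r^{n-1}f')'$. The superharmonicity of $\bar u$, $-\Delta\bar u$, and $\Delta^2\bar u$ yields definite signs for the auxiliary moments $r^{n-1}((-\Delta)^k\bar u)'$, which is essential to discard or control the singular contributions at the origin after each integration by parts. Using Lemma~\ref{lm:universalestimates} (which supplies the crude bound $\bar u(r)\leqslant Cr^{6-n}$) to absorb the boundary terms at $r=R$, one arrives at an integral inequality of the form
\[
\bar u(r)\geqslant \int_{r}^{R}\mathcal{K}(r,s)\,\bar u(s)^{2_{\#}}\,s^{n-1}\,ds - C,\qquad 0<r\ll 1,
\]
where $\mathcal{K}(r,s)\asymp s^{6-n}$ for $r\leqslant s$.

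At this point the key scale computation appears: inserting the base estimate $\bar u^{2_{\#}}\leqslant Cs^{-n}$ produces $\int_r^R s^{-1}\,ds\asymp \ln(R/r)$, which is precisely the source of the logarithmic correction. To pin down the exponent $(6-n)/6$ I would argue by contradiction: set $\Phi(r):=r^{n-6}|\ln r|^{-(n-6)/6}\bar u(r)$ and suppose $\Phi(r_k)\to+\infty$ along some sequence $r_k\downarrow 0$. Feeding this improved lower bound on $\bar u$ back into the integral inequality and iterating would force $\bar u$ to grow strictly faster than $r^{6-n}$, contradicting Lemma~\ref{lm:universalestimates}. The arithmetic singles out exactly $(n-6)/6$ because the operator has order $6$ and the critical power $2_{\#}$ generates precisely one logarithm per full inversion of $(-\Delta)^3$, so the log-exponent must match the ratio of the operator order and the dimension drop $n-6$.

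The hard part will be Step~2: carefully executing the three nested radial integrations by parts while tracking, for each layer $\bar u$, $-\Delta\bar u$, and $\Delta^2\bar u$, both the singular contribution at $r=0$ (where $\bar u$ blows up) and the boundary contribution at $r=R$. Without the hypotheses $-\Delta u,\Delta^2 u\geqslant 0$, sign control on the intermediate moments would fail and the bootstrap could not proceed, which is why the lemma carries these assumptions explicitly. An alternative route I would keep in reserve is to perform the bootstrap directly on the cylinder using the nonautonomous transform $\tilde{\mathfrak{F}}$ from \eqref{noncyltransform}: the target then becomes the boundedness of $\bar w(t)$ as $t\to-\infty$, with the logarithmic correction absorbed into the transformation itself, and the same integration-by-parts scheme can be run in additive variables.
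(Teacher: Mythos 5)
Your overall framework is the right one and matches the paper's: this is Soranzo's method from \cite{MR1436822}, i.e.\ reduce to the radial problem, decompose $(-\Delta)^3$ into three second-order layers, use the hypotheses $-\Delta u\geqslant0$, $\Delta^2u\geqslant0$ to control the signs of the moments $r^{n-1}\psi_j^{(1)}$ and of the Kelvin transforms of each layer, and feed in the crude bound $\bar u(r)\leqslant Cr^{6-n}$ from Lemma~\ref{lm:universalestimates}. The gap is in your final step, where the exponent $(6-n)/6$ is supposed to be extracted. First, the ``key scale computation'' is off: with $\mathcal{K}(r,s)\asymp s^{6-n}$ and $\bar u(s)^{2_{\#}}\leqslant Cs^{-n}$, the integrand $\mathcal{K}(r,s)\bar u(s)^{2_{\#}}s^{n-1}$ is of order $s^{5-n}$, so $\int_r^R(\cdots)\,\ud s\asymp r^{6-n}$, not $\ln(R/r)$; the logarithm does not appear at the level of the naive kernel estimate. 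Second, the contradiction/iteration does not close as described: if you assume $\bar u(s)\geqslant A\,s^{6-n}|\ln s|^{(6-n)/6}$ and feed it back into your integral inequality, the right-hand side returns a lower bound of order $A^{2_{\#}}r^{6-n}|\ln r|^{-n/6}$, which carries an \emph{extra} factor $|\ln r|^{-1}$ relative to the hypothesis; one only gains when $A\gtrsim|\ln r|^{(n-6)/6}$, i.e.\ when one is already at the threshold of the universal bound, so no contradiction with $\bar u\leqslant Cr^{6-n}$ is produced in finitely many iterations.

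The mechanism that actually pins down the exponent is a first-order differential inequality for the normalized average: the nested radial integrations (your Step~2) are organized in the paper so as to yield $\varphi_0^{(1)}(r)\geqslant Cr^{-1}\varphi_0(r)^{2_{\#}}$ for $\varphi_0(r)=r^{n-6}\bar u(r)$, and integrating this separable inequality gives $\varphi_0(r)^{-6/(n-6)}\geqslant C|\ln r|$, which is exactly the claimed bound. The crucial feature is the factor $r^{-1}$ multiplying the superlinear term, which is produced by the careful bookkeeping of the auxiliary functions $\Psi_j$ and their boundary contributions, and is invisible in the kernel estimate $\mathcal{K}\asymp s^{6-n}$. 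Your reserve plan on the cylinder via $\tilde{\mathfrak{F}}$ would face the same issue: the boundedness of $\bar w$ as $t\rightarrow-\infty$ must still come from a Riccati-type first-order inequality, not from iterating the integral representation. So the proposal needs its last step replaced by the derivation and integration of this differential inequality.
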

	
	\begin{proof}
		Note that for each $0<r<R$ with $r=|x|$, we get that the spherical average $\overline{u}\in C^{6}(\mathbb R)$ satisfies the following nonautonomous ODE
		\begin{equation}\label{polarODE}
			r^{-6}\partial_r^{(6)}+M_5(n,r)\partial_r^{(5)}+M_4(n,r)\partial_r^{(4)}+ M_3(n,r)\partial_r^{(3)}+M_2(n,r)\partial_r^{(2)}+M_1(n,r)\partial_r-f_{\#}(u)\geqslant0,
		\end{equation}
		where the coefficients $M_j(n,\cdot):(0,R)\rightarrow\mathbb{R}$ are given by \eqref{laplaciancoefficients1} and \eqref{laplaciancoefficients2}.
		Next, setting 
		\begin{equation*}
			\psi_0=u, \quad \psi_1=-\Delta u, \quad {\rm and} \quad \psi_2=\Delta^2 u.
		\end{equation*}
		we can reformulate \eqref{polarODE} as a system in radial coordinates and the divergent form, we have
		\begin{equation}\label{radialpolarsystem}
			\begin{cases}
				-\left(r^{n-1} \psi_0^{(1)}(r)\right)^{(1)}=r^{n-1} \psi_0(r) &\\
				-\left(r^{n-1} \psi_1^{(1)}(r)\right)^{(1)}=r^{n-1} \psi_1(r) & \text { for } r \in(0, R)\\
				-\left(r^{n-1} \psi_2^{(1)}(r)\right)^{(1)}=r^{n-1} f_{\#}(u(r)). &  
			\end{cases}
		\end{equation}
		
		In what follows, the proof will be divided into some steps
		
		\noindent{\bf Step 1:}  Either $u\in C(B_1)\cap H^3(B_1)$ is a continuous weak solution to \eqref{ourPDE} with $p=2_{\#}$, or $\lim_{r\rightarrow0}u(r)=\lim_{r\rightarrow0}-\Delta u(r)\lim_{r\rightarrow0}\Delta^2 u(r)=+\infty$. 
		In particular, 
		\begin{equation*}
			u^{(1)}(r) \leqslant 0, \quad (-\Delta u)^{(1)}(r) \leqslant 0, \quad {\rm and} \quad (\Delta^2 u)^{(1)}(r) \leqslant 0 \quad {\rm for} \quad r\in(0, R).
		\end{equation*}
		
		Suppose that $u\in C^1(B^{*}_1)$ does not have a removable singularity at the origin. 
		Then $u$ must be unbounded in $B_1$, and, in this case, also a distribution solution to \eqref{ourPDE} in the entire $B_1$. 
		Since $f_p(u) \in L^{n/(6-\delta)}(B_1)$ for some $\delta \in(0,6)$, by bootstrap argument, $u$ may be extended to a continuous weak solution to \eqref{ourPDE}, which is a contradiction.
		
		First, we claim that $u(r)\rightarrow+\infty$ as $r \rightarrow 0$. 
		In fact, suppose by contradiction that $\liminf_{r \rightarrow 0} u(r)<+\infty$.
		Then there exist two real sequences $\{m_k\}_{k\in\mathbb N},\{m_k\}_{k\in\mathbb N}$ such that $M_k>m_k\rightarrow 0$ as $k\rightarrow\infty$ and $u$ assumes local maxima at $M_k$, local minima at $m_k$ for all $k\in\mathbb N$.
		Moreover $u(M_k) \rightarrow +\infty$ as $k\rightarrow\infty$ while $\{u(m_k)\}_{k\in\mathbb N}\subset \mathbb R$ remains bounded. 
		Since $-\Delta u \geqslant 0$, by the maximum principle, we get 
		\begin{equation*}
			u(M_k) \geqslant \min \{u(m_k), u(m_{k+1})\}\quad {\rm for \ all} \quad k\in\mathbb N,
		\end{equation*}
		which contradicts the boundedness of $\{u(m_k)\}_{k\in\mathbb N}\subset \mathbb R$.
		Using the same argument, it is also clear that $-\Delta u(r)\rightarrow+\infty$ and $\Delta^2 u(r)\rightarrow+\infty$  as $r \rightarrow 0$. 
		
		By integrating \eqref{radialpolarsystem} on $(\rho, r)$, where $0<\rho\ll R$ small enough, we find 
		\begin{equation*}
			r^{n-1} \psi_j^{(1)}(r) \leqslant \rho^{n-1} \psi_j^{(1)}(\rho)<0 \quad {\rm for} \quad j=0,1,2.
		\end{equation*}
		This completes the proof of Step 1.

		To continue with the proof, we need to introduce some auxiliary functions.
		Namely, we define the functions $\Psi_0,\Psi_1,\Psi_2:(0,R)\rightarrow \mathbb R$ given by
		\begin{equation*}
			\Psi_0(r)=r\psi_0(r), \quad \Psi_1(r)=r\psi_1(r)+(n-2)\psi^{(1)}_1, \quad \Psi_2(r)=r\psi_2+(n-2)\psi^{(1)}_2,
		\end{equation*}
		which satisfy
		\begin{equation}\label{radialsystemauxiliary}
			\begin{cases}
				-\Delta \Psi_0=\Psi_1 \\
				-r^{-1} \Psi_1^{(1)}=\Psi_2\\
				-r^{-1} \Psi_2^{(1)}=(-\Delta)^3\Psi_0.
			\end{cases}
		\end{equation}
		
		\noindent{\bf Step 2.} There exists $0<r_0\ll1$ and $0<\rho<r_0$, it follows
		$\lim_{r\rightarrow 0}r^{n-2(2-j)} \psi_j(r)=0$ and $\Psi_j(r)\geqslant 0$ in $r\in(0,\rho)$ for $j=0,1,2$. 
		
		Since $u\in L^1_{\rm loc}(B_R)$ is a distribution solution to \eqref{ourPDE} with $p=2_{\#}$, it follows that $\liminf_{r \rightarrow 0}r^{n-2(2-j)} \psi_j(r)=0$ for $j=0,1,2$.
		
		We start with the case $j=2$. Indeed, let us denote by $\psi_0^*(r^{-1})=r^{n-6} \psi_0(r)$ the sixth order Kelvin transform $\psi_0=u$ with respect to $\partial B_1$.
		Hence, by \cite[Lemma 3]{MR1436822}, we find that $-\Delta^3 \psi_0^* \geqslant 0$ in $\mathcal{I}^*(B_R):=\mathbb R^n\setminus B^*_1$.
		Then, by Step 1, $\psi_0^*$ is monotone near $+\infty$, which implies that $r^{n-6}\psi_0(r)$ is monotone near the origin. 
		Hence $\lim_{r \rightarrow 0} r^{n-6} \psi_0(r)>0$ exists and is positive for small $0<r\ll1$.
		
		To verify the case $j=1$, let us denote by $\psi_1^*(r^{-1})(r^{-1})=r^{n-4} u(r)$ the fourth order Kelvin transform of $\psi_1=\Delta u$ with respect to $\partial B_1$.
		Hence, by \cite[Lemma 3]{MR1436822}, we find that $\Delta^2 \psi_1^* \geqslant 0$ in $\mathcal{I}^*(B_R)$.
		Then, from Step 1, we conclude that $\psi_1^*$ is monotone near $+\infty$, which implies that $r^{n-4}\psi_1(r)$ is monotone near the origin. 
		Hence $\lim_{r \rightarrow 0} r^{n-4} \psi_1(r)>0$ exists and is positive for small $0<r\ll1$.
		
		Finally, for the case $j=0$, let us denote by $\psi_2^*(r^{-1})=r^{n-2} u(r)$ the second order Kelvin transform of $\psi_2=\Delta^2 u$ with respect to $\partial B_1$.
		Hence, by \cite[Lemma 3]{MR1436822}, we find that $-\Delta \psi_2^* \geqslant 0$ in $\mathcal{I}^*(B_R)$.
		Then, from Step 1, we conclude that $\psi_2^*$ is monotone near $+\infty$, which implies that $r^{n-2}\psi_2(r)$ is monotone near the origin. 
		Hence $\lim_{r \rightarrow 0} r^{n-2} \psi_2(r)>0$ exists and is positive for small $0<r\ll1$.
		
		The proof of Step 2 is concluded.
		
		\noindent{\bf Step 3.} $\lim_{r \rightarrow 0}r^{n-2} \Psi_0(r)=0$ and $\Psi_0^{(1)}(r)+(n-2)\Psi_0(r) \geqslant 0$.
		
		Finally,  from  the case $j=2$, it holds that $0 \leqslant-r u^{(1)}(r) \leqslant(n-6) u(r)$ in $(0,\rho)$.
		Thus, multiplying the last inequality by $r^{n-2}$ and letting $r \rightarrow 0$, we get that $\lim_{r \rightarrow 0} r^{n-1} u^{(1)}(r)=0$, and so
		\begin{equation*}
			\lim _{r \rightarrow 0} r^{n-2} \Psi_0(r)=\lim _{r \rightarrow 0}\left(r^{n-1} u^{(1)}(r)+(n-6) r^{n-2} u(r)\right)=0.
		\end{equation*}
		Next, for $0<r_0\ll1$ small given by Step 2, it follows from \eqref{radialsystemauxiliary} that $-\Delta\Psi_0\geqslant0$ and $\Psi_0\geqslant 0$ in $B^*_{r_0}$.
		Therefore, by considering $\Psi_0^*(r^{-1})=r^{n-2}\Psi_0(r)$, we obtain $-\Delta\Psi_0^* \geqslant 0$ and $\Psi_0^* \geqslant 0$ in $\mathcal{I}^*(B_R)$.
		In addition, $\Psi_0^*(r^{-1}) \rightarrow 0$.
		Consequently, a direct application of the maximum principle, yields $\Psi_0^*(r^{-1}) \leqslant 0$ for all $r^{-1}>r_0^{-1}$, which in turn proves completes the proof of Step 3.
		
		\noindent{\bf Step 4.} $\Psi_0^{(1)}(r) \leqslant 0$ for $0<r\ll1$, then $\Psi_0(r) \rightarrow+\infty$ as $r\rightarrow 0$.
		
		We observe that from Step 3 it follows that $-(r^{n-1} \Psi_0^{(1)})^{(1)} \geqslant 0$ for $r \in(0, \rho_1)$,
		which by integrating on $(\varepsilon, r)$, with $r\in(0,\rho_1)$, yields
		\begin{equation*}
			-r^{n-1} \Psi_0^{(1)}(r)+\varepsilon^{n-1} \Psi_0^{(1)}(\varepsilon)\geqslant 0.
		\end{equation*}
		Hence, by letting $\varepsilon \rightarrow 0$ in the last inequality we find that $\Psi_0^{(1)}(r) \leqslant 0$ for $r\in(0, \rho_1)$. 
		At last, suppose by contradiction that there exists $C>0$ such that $\Psi_0(r)\leqslant C$ for all $r>0$, which would imply that 
		\begin{equation*}
			\Psi_0(r)=r^{7-n}(r^{n-6} u(r))^{(1)} \leqslant C.
		\end{equation*}
		
		On the other hand, integrating the last inequality on $(0, r)$, we would obtain $u(r) \leqslant C(n-6)^{-1}$, which is a contradiction of the origin is a non-removable singularity and proves Step 4.
		
		\noindent{\bf Step 5.} $|\overline{u}(x)|\leqslant C_0(n)|x|^{6-n}(\ln|x|)^{\frac{6-n}{6}}$ for $0<|x|<r_0$.
		
		Let us set 
		\begin{equation*}
			\varphi_0(r)=r^{n-6} u(r), \quad \varphi_1(r)=r^{n-4} \varphi_0(r), \quad {\rm and} \quad \varphi_2(r)=r^{n-2} \varphi_1(r).
		\end{equation*}
		Whence, using Step 1, we get that $6\varphi_0^{(1)}(r) \geqslant 0$, $\varphi_1^{(1)}(r)$ and $\varphi_2^{(2)}(r) \geqslant 0$ in $(0, r_0)$. 
		Now, since $\Psi_1(r)=$ $r^{4-n}\varphi_1^{(1)}(r)$, one has from Step 2 the following holds
		\begin{equation*}
			-\left(r \Psi_0^{(1)}(r)+(n-2) \Psi_0(r)\right)^{(1)}=r^{4-n} \varphi_1^{(1)}(r),
		\end{equation*}
		which by integrating on $(r, r_0)$, yields
		\begin{equation*}
			-\left(r \Psi_0^{(1)}(r_0)+(n-2) \Psi_0(r_0)\right)+r \Psi_0^{(1)}(r)+(n-2) \Psi_0(r)\geqslant \varphi_1(r)\left(r^{4-n}-r_0^{4-n}\right).
		\end{equation*}
		Now, since $\varphi_0(r) \rightarrow 0$ as $r \rightarrow 0$, one can find $r_1 \in(0, r_0)$ such that
		\begin{equation*}
			\left(r \Psi_0^{(1)}(r_0)+(n-2) \Psi_0(r_0)\right)-\varphi_1(r) r_0^{4-n} \geqslant 0, \quad \text { on } \quad (0, r_1),
		\end{equation*}
		which, since from Step 5 implies $\Psi_0^{(1)}(r) \leqslant 0$ for $r$ small,  one can find $C>0$ such that 
		\begin{equation*}
			\Psi_0(r) \geqslant Cr^{4-n} \varphi_0(r) \quad {\rm in} \quad (0,r_1),
		\end{equation*}
		and so $\varphi_0^{(1)}(r) \geqslant Cr^{-1} \varphi_1(r)$.
		Hence, since $\psi_1(r) \geqslant Cf_{\#}(u(r))$, we get $\varphi_1(r) \geqslant Cf_{\#}(\varphi_0(r))$, which yields
		\begin{equation*}
			\varphi_0^{(1)}(r) \geqslant C r^{-1} f_{\#}(\varphi_0(r)) \quad {\rm for} \quad 0<r<r_1.
		\end{equation*}
		Up to a rescaling, we may assume that the last inequality holds for $r\in(0,1)$. 
		Thus, by a direct integration, we obtain $\varphi_0(r)^{-\frac{6}{n-6}} \geqslant C\ln r$ as $r\rightarrow0$,
		which in turn leads to
		\begin{equation*}
			u(r)r^{n-6}(\ln r)^{\frac{n-6}{6}} \leqslant C \quad {\rm as} \quad r\rightarrow0.
		\end{equation*}
		This completes the proof of the lemma.
	\end{proof}
	
	\section{Asymptotic radial symmetry}\label{sec:asympradialsymmetry}
	We prove the first part of Theorem~\ref{maintheorem} asserting that solutions to \eqref{ourPDE} are radially symmetric about the origin. 
	This symmetry will later be used to convert the singular PDE into a non-singular ODE on the cylinder.
	
	Before that, we need to establish some preliminaries.
	\subsection{Kelvin transform}
	Later we will employ the moving spheres technique, which is based on the {sixth order Kelvin transform} of a real valued function. 
	To define the Kelvin transform, we need to establish the concept of {inversion about a sphere} $\partial B_{\mu}(x_0)$, which is a map $\mathcal{I}_{x_0,\mu}:\mathbb{R}^n\rightarrow\mathbb{R}^n\setminus\{x_0\}$ given by $\mathcal{I}_{x_0,\mu}(x)=x_0+K_{x_0,\mu}(x)^2(x-x_0)$, where $K_{x_0,\mu}(x)=\mu/|x-x_0|$. 
	
	\begin{definition}
		For any $u\in C^6(B_R^*)$, let us consider the sixth order Kelvin transform about the sphere with center at $x_0\in\mathbb{R}^n$ and radius $\mu>0$ defined by 
		\begin{equation*}
			u_{x_0,\mu}(x)=K_{x_0,\mu}(x)^{n-6}u\left(\mathcal{I}_{x_0,\mu}(x)\right).
		\end{equation*}
	\end{definition}
	
	\begin{lemma}
		If $u\in C^6(B_R^*)$ is a solution to \eqref{ourPDE}, then  $u_{x_0,\mu}\in C^6(B_R^*\setminus\{x_0\})$ is a solution to 
		\begin{equation*}
			(-\Delta)^{3}u_{x_0,\mu}=K_{x_0,\mu}(x)^{(n-6)p-(n+6)}f_p(u_{x_0,\mu}) \quad {\rm in} \quad B_R^*\setminus\{x_0\}.
		\end{equation*}
	\end{lemma}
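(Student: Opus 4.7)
The plan is to derive the transformed equation in two stages: first establish the conformal covariance of the triharmonic operator under spherical inversions, and then substitute the PDE satisfied by $u$ and use the definition of the Kelvin transform to rewrite $f_p(u\circ \mathcal{I}_{x_0,\mu})$ in terms of $u_{x_0,\mu}$.

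The key intermediate identity I would prove is
$$
(-\Delta)^3 u_{x_0,\mu}(x)\;=\;K_{x_0,\mu}(x)^{n+6}\,\bigl[(-\Delta)^3 u\bigr]\bigl(\mathcal{I}_{x_0,\mu}(x)\bigr).
$$
After translating so $x_0=0$ and rescaling so $\mu=1$, this reduces to the classical statement that for $\tilde u(x)=|x|^{-(n-6)} u(x/|x|^2)$ one has $(-\Delta)^3\tilde u(x)=|x|^{-(n+6)}\bigl[(-\Delta)^3u\bigr](x/|x|^2)$. I would prove this by induction on the order, using the familiar order-two relation $\Delta\bigl(|x|^{-(n-2k)}v(x/|x|^2)\bigr)=|x|^{-(n-2k+4)}(\Delta v)(x/|x|^2)$ applied three times with $k=3,2,1$; each application increases the power of $|x|^{-1}$ on the right-hand side by $4$ while lowering the ``weight'' of the Kelvin transform by $2$. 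A more conceptual route is to observe that $\mathcal{I}_{x_0,\mu}$ is a conformal diffeomorphism with conformal factor $K_{x_0,\mu}^{2}$, and that on flat $\mathbb R^n$ the GJMS operator of order six coincides with $(-\Delta)^3$ since the Schouten tensor vanishes; the conformal covariance of the GJMS operators then gives the identity at once.

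With this identity in place, substituting $(-\Delta)^3 u(y)=|u(y)|^{p-1}u(y)$ at the point $y=\mathcal{I}_{x_0,\mu}(x)$, together with the defining relation $u(\mathcal{I}_{x_0,\mu}(x))=K_{x_0,\mu}(x)^{-(n-6)}\,u_{x_0,\mu}(x)$ read in reverse, produces
$$
(-\Delta)^3 u_{x_0,\mu}(x)\;=\;K_{x_0,\mu}(x)^{\,(n+6)-(n-6)p}\,|u_{x_0,\mu}(x)|^{p-1}u_{x_0,\mu}(x),
$$
which, after collecting the exponent of $K_{x_0,\mu}$, yields the stated formula. Note that this exponent vanishes exactly when $p=(n+6)/(n-6)=2^{\#}-1$, reflecting the conformal invariance of the upper critical problem; in the subcritical range $p<2^{\#}-1$, the weight $K_{x_0,\mu}^{(n+6)-(n-6)p}$ carries a definite sign, and this sign is precisely what will drive the moving spheres argument in the next section.

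The main obstacle is the first step, the order-six conformal identity. Although classical, a fully self-contained proof requires the telescoping induction and careful exponent bookkeeping; the GJMS viewpoint renders it a one-line consequence of a standard theorem, and I would favour the inductive proof for accessibility. The smoothness claim $u_{x_0,\mu}\in C^{6}(B_R^*\setminus\{x_0\})$ is then immediate from $u\in C^6(B_R^*)$ together with smoothness of $\mathcal{I}_{x_0,\mu}$ on $\mathbb R^n\setminus\{x_0\}$.
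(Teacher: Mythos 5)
Your overall strategy — establish the conformal covariance of $(-\Delta)^3$ under the inversion and then substitute the equation — is exactly what the paper's one-line proof delegates to Xu's Lemma~3.6, so in spirit you are reproving the cited result. Two points need fixing, though. First, the inductive mechanism you propose does not work as written: the relation $\Delta\bigl(|x|^{-(n-2k)}v(x/|x|^2)\bigr)=|x|^{-(n-2k+4)}(\Delta v)(x/|x|^2)$ is \emph{false} for $k\neq 1$ (test $k=3$, $v\equiv 1$: the left side is $4(6-n)|x|^{4-n}\neq 0$ while the right side vanishes). The clean covariance of $\Delta$ under inversion holds only at the conformal weight $n-2$; to telescope up to order six you must compensate the weights at each stage, e.g.\ write $|x|^{6-n}v(x/|x|^2)=|x|^{2-n}\,w(x/|x|^2)$ with $w(y)=|y|^{-4}v(y)$, apply the weight-$(n-2)$ identity, and repeat with $|y|^{-2}$ at the next step — or simply verify $(-\Delta)^3\bigl(|x|^{6-n}v(x/|x|^2)\bigr)=|x|^{-(n+6)}\bigl[(-\Delta)^3v\bigr](x/|x|^2)$ directly on the homogeneous functions $|y|^\alpha$ and by density/polynomial expansion, or invoke the GJMS covariance as you suggest. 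The target identity is true; the proposed route to it is not.

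Second, the exponent you obtain, $(n+6)-(n-6)p$, is the \emph{negative} of the exponent $(n-6)p-(n+6)$ appearing in the lemma, yet you assert without comment that your computation ``yields the stated formula.'' It does not; the two can only both be right at $p=2^{\#}-1$. With the paper's convention $K_{x_0,\mu}(x)=\mu/|x-x_0|$, your sign is in fact the correct one (the printed exponent would be correct for the base $|x-x_0|/\mu$), and it is the positivity of $(n+6)-(n-6)p$ in the subcritical range that the moving-spheres comparison uses. You should flag this discrepancy explicitly rather than pass over it, since an unexamined sign here would silently reverse the inequality driving Claim~1 of Proposition~\ref{prop:asymptoticsymmetry}.
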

	
	\begin{proof}
		It directly follows by using \cite[Lemma~3.6]{MR1769247}.
	\end{proof}
	
	\subsection{Integral representation}
	Now we use a Green identity to transform the sixth order differential system  \eqref{ourPDE} into an integral system.
	In this way, we can avoid using the classical form of the maximum principle, and a sliding method is available \cite{MR3558255, MR2055032}, which will be used to classify solutions. 	
	Besides, in this setting is also possible to prove regularity through a barrier construction.
	
	We start with the following result 
	\begin{lemma}\label{lm:integrability}
		Let $u\in C^{6}(B_R^*)$ be a positive solution to \eqref{ourPDE} with $p\in(1,+\infty)$. 
		Assume that $-\Delta u\geqslant0$ and $\Delta^2u\geqslant0$.
		Then, $u \in L^{p}(B_{1})$. 
		In particular, if $p\in(2_{\#},+\infty)$, then $u \in L^1(B_1)$ is a
		distribution solution to \eqref{ourPDE}, that is, for all positive  $\phi\in C^{\infty}_c(B_1)$, one has
		\begin{equation*}
			\int_{B_{1}} u(-\Delta)^3 {\phi}\ud x=\int_{B_{1}}f_p(u) {\phi} \ud x.
		\end{equation*}
	\end{lemma}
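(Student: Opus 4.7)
My strategy leverages the super-polyharmonic hypothesis to realize $u$, $-\Delta u$, and $\Delta^2 u$ as nonnegative superharmonic distributions on the entire ball $B_R$, and then extracts the $L^p$ bound by a single duality at the top level. Set $v_0:=u$, $v_1:=-\Delta u$, $v_2:=\Delta^2 u$, each nonnegative and classically superharmonic on $B_R^*$ (the case $j=2$ relying on $-\Delta v_2=f_p(u)\geqslant 0$). A classical potential-theoretic fact (of B\^ocher/Br\'ezis--V\'eron type) guarantees $v_0,v_1,v_2\in L^1_{\loc}(B_R)$, and the pointwise identity $-\Delta v_j=v_{j+1}$ on $B_R^*$ lifts to $-\Delta v_j=v_{j+1}+c_j\delta_0$ in $\mathcal{D}'(B_R)$ for some $c_j\geqslant 0$, where we set $v_3:=f_p(u)$.

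The $L^p$ bound is then immediate. Pick a nonnegative plateau cutoff $\phi\in C_c^\infty(B_1)$ with $\phi\equiv 1$ on $B_{1/2}$, and pair the $j=2$ identity with $\phi$ to obtain
\[
\int_{B_1} u^p\phi\, \ud x+c_2\phi(0)=\int_{B_1}v_2(-\Delta\phi)\, \ud x\leqslant \|\Delta\phi\|_{L^\infty(B_1)}\|\Delta^2 u\|_{L^1(B_1)}<+\infty.
\]
Since $c_2\phi(0)\geqslant 0$, this gives $u\in L^p(B_{1/2})$, and smoothness of $u$ on the annulus $\overline{B_1\setminus B_{1/2}}$ upgrades this to $u\in L^p(B_1)$.

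For the distribution-solution statement when $p>2_{\#}$, the task reduces to showing $c_0=c_1=c_2=0$. I argue by contradiction. If $c_2>0$, Green-function comparison forces $v_2(x)\geqslant C|x|^{2-n}$ near the origin; iterating the Green representations of $-\Delta v_1=v_2+c_1\delta_0$ and $-\Delta u=v_1+c_0\delta_0$ propagates this singularity to $v_1\geqslant C|x|^{4-n}$ and $u\geqslant C|x|^{6-n}$. Analogous conclusions hold when $c_1$ or $c_0$ is the first nonvanishing residue, yielding $u\geqslant C|x|^{4-n}$ or $u\geqslant C|x|^{2-n}$, respectively. These lower bounds contradict $u\in L^p(B_1)$ precisely when $p>n/(n-6)=2_{\#}$, $p>n/(n-4)$, or $p>n/(n-2)$; since $2_{\#}$ is the largest of the three, the hypothesis $p>2_{\#}$ eliminates every $c_j$, and $(-\Delta)^3u=f_p(u)$ in $\mathcal{D}'(B_R)$ follows, giving the stated integral identity against any $\phi\in C_c^\infty(B_1)$.

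The principal obstacle will be the first step: rigorously identifying each $v_j$ with an $L^1$ distributional superharmonic function on the full ball $B_R$ and isolating its point-mass residue at the origin. This demands a B\^ocher-type decomposition at each level of the poly-Laplacian hierarchy, together with care in justifying that no further singular distribution (beyond a Dirac mass) can appear; once this is in place, the test-function computation and the Green-potential contradiction argument for the Dirac residues are essentially mechanical.
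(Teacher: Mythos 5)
Your proposal is correct. Note, however, that the paper does not prove this lemma at all: it simply points to \cite[Lemma~3.1]{MR4123335} and \cite[Theorem~3.7]{MR1769247}, whose arguments are precisely the superharmonicity-based scheme you describe (iterated Brezis--Lions/B\^ocher decompositions for the nonnegative superharmonic chain $u$, $-\Delta u$, $\Delta^2 u$, followed by Green-potential lower bounds to kill the Dirac residues when $p>2_{\#}$). Two small points of tightening. First, the ``principal obstacle'' you flag is genuinely classical and not a gap: a nonnegative superharmonic function on the punctured ball extends superharmonically across the origin (points are polar), so its Riesz measure is a locally finite nonnegative Radon measure on the full ball; nonnegativity is exactly what rules out derivatives of $\delta_0$, and local finiteness of that measure at the level $j=2$ already \emph{is} the statement $u^p\in L^1_{\loc}$ --- your cutoff pairing is then a corollary rather than an independent derivation, and should be presented as such to avoid the appearance of circularity (the identity $-\Delta v_2=u^p+c_2\delta_0$ in $\mathcal D'$ presupposes $u^p\in L^1_{\loc}$). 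Second, your cascade of lower bounds and the comparison of the three thresholds $n/(n-2)<n/(n-4)<n/(n-6)=2_{\#}$ is exactly right, and the final composition of the three second-order distributional identities into the sixth-order one is immediate by testing successively with $(-\Delta)^2\phi$, $-\Delta\phi$, $\phi$. The only cosmetic caveat, inherited from the paper's own statement, is that $u\in L^p(B_1)$ up to the boundary requires $\overline{B_1}\subset B_R$; what is really meant (and all that is used later) is $u\in L^p$ near the origin.
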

	
	\begin{proof}
		See the proof of \cite[Lemma~3.1]{MR4123335} (see also \cite[Theorem~3.7]{MR1769247}). 
	\end{proof}
	The next result uses the Green identity to convert \eqref{ourPDE} into an integral system. We divide this results in two cases, namely $R<+\infty$ and $R=+\infty$.
	
	\begin{lemma}\label{lm:integralrepresentation}
		Let $u\in C^{6}(B_R^*)$ be a positive solution to \eqref{ourPDE} with $p\in(1,+\infty)$. 
		Assume that $-\Delta u\geqslant0$ and $\Delta^2u\geqslant0$.
		\begin{itemize}
			\item[{\rm (i)}] If $R<+\infty$, then $($up to constant$)$ there exists $r_0>0$ such that 
			\begin{equation}\label{integralsystem}
				u(x)=\int_{B_{r_0}}|x-y|^{6-n}{f}_p(u) \ud y+\psi(x),
			\end{equation}
			where $\psi>0$ satisfies $(-\Delta)^{3} \psi=0$ in $B_{r_0}$. Moreover, one can find a constant $C(\widetilde{r})>0$ such that $\|\nabla \ln \psi\|_{C^{0}(B_{\widetilde{r}})} \leqslant C(\widetilde{r})$ for all $0<\widetilde{r}<r_0$. 
			\item[{\rm (ii)}] If $R=+\infty$, then $($up to constant$)$, it follows
			\begin{equation*}
				u(x)=\int_{\mathbb{R}^n}|x-y|^{6-n}{f}_p(u) \ud y.
			\end{equation*}
			
		\end{itemize}		 
	\end{lemma}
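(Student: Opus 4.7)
The plan is to combine Green's representation for $(-\Delta)^3$ with the super-poly-harmonic hypotheses. By Lemma~\ref{lm:integrability}, $u \in L^p(B_1)$, so that $f_p(u) \in L^1(B_{r_0})$ for every small $r_0 > 0$. Writing $\Gamma(x) = c_n|x|^{6-n}$ (with $c_n>0$) for the fundamental solution of $(-\Delta)^3$ on $\mathbb{R}^n$, I take the natural candidate for the volume part to be the Riesz-type potential
\[
v(x) := c_n \int_{B_{r_0}} |x-y|^{6-n} f_p(u)(y)\,\mathrm{d}y,
\]
which belongs to $L^1(B_{r_0})$ and satisfies $(-\Delta)^3 v = f_p(u)$ in $B_{r_0}$ in the sense of distributions. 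Setting $\psi := u-v$ on $B_{r_0}^*$, the function $\psi$ is classically triharmonic.

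To extend triharmonicity across the origin, I would argue as follows: for $p \geq 2_{\#}$, Lemma~\ref{lm:integrability} directly provides a distributional equation for $u$ on all of $B_{r_0}$, so $(-\Delta)^3 \psi = 0$ distributionally on $B_{r_0}$; for $p < 2_{\#}$, I would invoke an iterated Br\'ezis--Lions type argument exploiting the super-poly-harmonic signs $-\Delta u \geq 0$ and $\Delta^2 u \geq 0$ to rule out Dirac-delta contributions in $(-\Delta)^3 u - f_p(u)$. Elliptic regularity (Weyl's lemma for the tri-Laplacian) then yields $\psi \in C^\infty(B_{r_0})$ with $(-\Delta)^3 \psi = 0$ classically.

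For the positivity $\psi > 0$, my strategy is iterative on the factorization $(-\Delta)^3 = (-\Delta)(-\Delta)(-\Delta)$. Setting $w_0 = u$, $w_1 = -\Delta u$, $w_2 = \Delta^2 u$, all three are nonnegative by hypothesis; each satisfies $-\Delta w_j = w_{j+1}$ in $B^*_{r_0}$ (with $w_3 := f_p(u) \geq 0$), and continuity up to the boundary gives positive traces on $\partial B_{r_0}$. Solving each Poisson equation via the positive Dirichlet Green's function and positive Poisson kernel of $-\Delta$ on $B_{r_0}$ and composing, I would write $u$ as the triple Green-convolution of $f_p(u)$ plus three nonnegative Poisson integrals; the triple Green-convolution agrees with $v$ up to a positive triharmonic regular part, and collecting this regular part with the Poisson integrals yields a positive triharmonic $\psi$. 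The log-gradient bound $\|\nabla \ln \psi\|_{C^0(B_{\tilde r})} \leq C(\tilde r)$ is then immediate from interior Schauder estimates for $(-\Delta)^3 \psi = 0$ combined with the strict local lower bound on $\psi$.

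For part (ii) with $R = +\infty$, I would repeat the construction with $B_{r_0}$ replaced by $\mathbb{R}^n$: decay of $u$ at infinity coming from a Kelvin inversion of Corollary~\ref{cor:universalestimates} gives $f_p(u) \in L^1(\mathbb{R}^n)$, so the global Riesz potential $v$ is well defined. The difference $\psi = u-v$ is then triharmonic on all of $\mathbb{R}^n$ and vanishes at infinity, so the Liouville-type theorem for poly-harmonic functions forces $\psi \equiv 0$. The hard part of the whole argument is the positivity step in (i); it hinges on carefully identifying the volume contribution of the iterated Dirichlet Green's function with the fundamental-solution potential $v$, and on confirming that the residual regular and boundary parts aggregate into a strictly positive triharmonic function.
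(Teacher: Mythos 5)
The paper does not actually prove this lemma: part (i) is delegated to \cite[Lemma~2.3]{arxiv:1901.01678} and part (ii) to \cite[Theorem~4.3]{MR2200258}, and your overall architecture --- factoring $(-\Delta)^3$ into three second-order problems for $w_0=u$, $w_1=-\Delta u$, $w_2=\Delta^2u$, solving each with the positive Dirichlet Green function and Poisson kernel of $-\Delta$ on $B_{r_0}$, and using the sign hypotheses to control each piece --- is exactly the architecture of those references. However, your sketch has a genuine gap at the origin. You propose, for $p<2_{\#}$, to ``rule out Dirac-delta contributions'' in $(-\Delta)^3u-f_p(u)$ via a Br\'ezis--Lions argument. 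This cannot work: in the Serrin--Lions regime the solution satisfies $u\simeq|x|^{6-n}$ (Theorem~\ref{maintheorem}(a)), which forces $(-\Delta)^3u=f_p(u)+a\delta_0$ with $a>0$, since the Riesz potential of $f_p(u)=O(|x|^{-(n-6)p})$ is $o(|x|^{6-n})$ when $(n-6)p<n$ and hence cannot produce the $|x|^{6-n}$ lower bound. What superharmonicity and Br\'ezis--Lions give is the \emph{sign} of the point masses $\alpha_j\delta_0$ arising at each of the three stages, not their vanishing; the admissible ones contribute nonnegative multiples of $|x|^{6-n}$, $|x|^{4-n}$, $|x|^{2-n}$ that must be carried in the representation (the paper itself keeps the $|x|^{6-n}$ term explicitly in \eqref{lions2}), while the more singular ones are excluded using the upper bound $u=O(|x|^{-\gamma_p})$, not by a general removability principle. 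Any correct proof must confront this term; your plan as stated would fail precisely in the range $p\in(1,2_{\#})$ where the lemma is later used.

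The second issue is the sign bookkeeping in the positivity step. The iterated Dirichlet Green convolution you construct equals the Navier Green function $G_3(x,y)$ of $(-\Delta)^3$ applied to $f_p(u)$, and $0<G_3(x,y)\leqslant c_n|x-y|^{6-n}$ because each factor is the fundamental solution \emph{minus} a nonnegative harmonic corrector. Hence the Green convolution is bounded \emph{above} by your Riesz potential $v$, not below, so writing $u=v+\psi$ with the pure kernel $|x-y|^{6-n}$ makes $\psi$ equal to the (positive) Poisson part minus a nonnegative quantity, and its claimed positivity does not follow; the clean statement takes $G_3$ as the kernel, as the paper does in the proof of Lemma~\ref{lm:removablesingularitygidasspruck}, in which case $\psi$ is a sum of positive Poisson-type integrals of $u$, $-\Delta u$, $\Delta^2u$ on $\partial B_{r_0}$ and the log-gradient bound follows as you say. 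Finally, in part (ii) your assertion that $f_p(u)\in L^1(\mathbb{R}^n)$ is false for $p\geqslant 2_{\#}$ (it requires $p\gamma_p>n$); what is actually needed, and what holds, is convergence of the Riesz potential itself ($p\gamma_p>6$), and the Liouville step requires more than ``triharmonic and vanishing at infinity,'' since nonnegative entire triharmonic functions such as $|x|^4$ are not constant --- one must first show the polyharmonic remainder is a polynomial and then kill it using the decay, which is the actual content of the cited Chen--Li--Ou argument.
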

	
	\begin{proof}
		For (i), the proof is a simple adaptation of \cite[Lemma~2.3]{arxiv:1901.01678}.
		For (ii) see  \cite[Theorem~4.3]{MR2200258} (see also \cite{MR2131045}).		
	\end{proof}
	
	\subsection{Sliding technique}
	Now we use the preliminary results to run an integral moving spheres technique.
	We use an asymptotic moving spheres technique in the same spirit of \cite{MR4085120}.
	Although our proof are almost the same we include it here for the sake fo completeness.
	
	\begin{proposition}\label{prop:asymptoticsymmetry}
		Let $u\in C^{6}(B_R^*)$ be a positive singular solution to \eqref{ourPDE} with $p\in(1,2^{\#}-1)$. 
		Assume that $-\Delta u\geqslant0$ and $\Delta^2u\geqslant0$.
		Then,
		\begin{equation*}
			u(x)=(1+\mathcal{O}(|x|))\overline{u}(x) \quad {\rm as} \quad x\rightarrow0.
		\end{equation*}
	\end{proposition}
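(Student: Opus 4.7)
The plan is to carry out a version of the moving spheres technique, not on the solution $u$ directly but on the integral equation \eqref{integralsystem} supplied by Lemma~\ref{lm:integralrepresentation}(i). Fix $x_0\in B_{r_0/4}^*$ and, for $0<\mu<|x_0|/2$, introduce the sixth order Kelvin transform $u_{x_0,\mu}$ about $\partial B_\mu(x_0)$. Writing
\begin{equation*}
u(x)=\int_{B_{r_0}}|x-y|^{6-n}f_p(u(y))\,\ud y+\psi(x),
\end{equation*}
a conformal change of variables $y\mapsto \mathcal{I}_{x_0,\mu}(y)$ converts the integral part into a similar integral over $\mathcal{I}_{x_0,\mu}(B_{r_0})$ with kernel $|x-y|^{6-n}$ but with the density $K_{x_0,\mu}(y)^{(n-6)p-(n+6)}f_p(u_{x_0,\mu}(y))$. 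Since $p<2^{\#}-1$, the exponent $(n-6)p-(n+6)$ is strictly negative, so the weight $K^{(n-6)p-(n+6)}>1$ on the exterior $\{|y-x_0|>\mu\}$; this is the sign needed to run the comparison.

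Subtracting and splitting the integration into $\{|y-x_0|<\mu\}$ and $\{|y-x_0|>\mu\}$ yields an identity of the form
\begin{equation*}
(u-u_{x_0,\mu})(x)=\int_{\{|y-x_0|>\mu\}\cap B_{r_0}} G_{x_0,\mu}(x,y)\bigl[f_p(u(y))-K_{x_0,\mu}(y)^{\tau}f_p(u_{x_0,\mu}(y))\bigr]\ud y+(\psi-\psi_{x_0,\mu})(x),
\end{equation*}
with $\tau=(n-6)p-(n+6)<0$ and a nonnegative kernel $G_{x_0,\mu}$. I would then run the standard two-step moving spheres argument in the exterior $\Sigma_{x_0,\mu}=\{|x-x_0|>\mu\}\cap B_{r_0}^*\setminus\{0\}$. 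The first step is to show that for $\mu>0$ sufficiently small (depending on $x_0$) the difference $u-u_{x_0,\mu}$ is nonnegative on $\Sigma_{x_0,\mu}$; this follows from the integral inequality combined with the Harnack-type control of $\psi$ in Lemma~\ref{lm:integralrepresentation}(i) and the a priori bounds of Lemma~\ref{lm:universalestimates} and Corollary~\ref{cor:universalestimates}. The second step is to define the critical radius
\begin{equation*}
\bar{\mu}(x_0):=\sup\{\mu\in(0,|x_0|/2)\,:\,u\geqslant u_{x_0,\nu}\text{ on }\Sigma_{x_0,\nu}\text{ for all }0<\nu\leqslant\mu\}.
\end{equation*}
The key claim is that $\bar\mu(x_0)\geqslant c|x_0|$ for some $c>0$ independent of $x_0$; if instead $\bar\mu(x_0)/|x_0|\to 0$ along some sequence $x_0\to 0$, a rescaling $v_k(y)=|x_0^k|^{\gamma_p}u(x_0^k+|x_0^k|y)$ together with the universal estimates produces a blow-up limit to which the rigidity part of Theorem~\ref{thm:luo-wei} together with the Gidas--Ni--Nirenberg-type characterization via moving spheres applies, contradicting the subcriticality of $p$.

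Once $\bar\mu(x_0)\geqslant c|x_0|$ is established uniformly, the standard Korevaar--Schoen--Spruck calculus lemma converts this into the oscillation estimate: for any $x,y$ with $|x|=|y|=r$ small,
\begin{equation*}
\frac{u(x)}{u(y)}=1+\mathcal{O}(r),
\end{equation*}
which integrated against $\theta\in \mathbb{S}^{n-1}$ yields $u(x)=(1+\mathcal{O}(|x|))\overline{u}(x)$ as $x\to 0$. The main technical obstacle is the first step above, namely establishing the uniform lower bound $\bar\mu(x_0)\gtrsim |x_0|$: one must keep track of the nontrivial harmonic part $\psi$ (which did not appear in the second order prototype) and control the Kelvin-transformed weight $K_{x_0,\mu}^\tau$, both of which degenerate as $\mu\uparrow|x_0|$. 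The assumptions $-\Delta u\geqslant 0$ and $\Delta^2 u\geqslant 0$ enter precisely here, guaranteeing via the Harnack inequality of Lemma~\ref{lm:gradientestimates} that $u$ is comparable to $\overline{u}$ on each sphere and that the blow-up limits of the rescalings $v_k$ satisfy the superharmonicity conditions required by Theorem~\ref{thm:luo-wei}.
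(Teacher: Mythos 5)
Your overall skeleton is the same as the paper's: both proofs run an integral version of the moving spheres method on the Green representation \eqref{integralsystem} of Lemma~\ref{lm:integralrepresentation}(i) and then convert the resulting reflection inequalities into the oscillation bound via the Caffarelli--Gidas--Spruck/Korevaar--Mazzeo--Pacard--Schoen calculus lemma (the paper does this through the Kelvin transform $u_{0,1}$ at infinity and \cite[Theorem~6.1, Corollary~6.2]{MR982351}). However, there are two concrete problems in your sketch. First, the sign analysis of the Kelvin weight is backwards. The correctly computed weight on the reflected nonlinearity is $K_{x_0,\mu}(y)^{\,n+6-(n-6)p}$; since $n+6-(n-6)p>0$ for $p<2^{\#}-1$ and $K_{x_0,\mu}<1$ on the exterior $\{|y-x_0|>\mu\}$, this weight is $\leqslant 1$ there, and that is the favorable direction: it gives $f_p(u)-K^{\,n+6-(n-6)p}f_p(u_{x_0,\mu})\geqslant f_p(u)-f_p(u_{x_0,\mu})$, so the comparison closes on the set where $u\geqslant u_{x_0,\mu}$. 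A weight that is strictly greater than $1$ on the exterior, as you assert, would make the subtracted term larger and the argument would fail (this is exactly the obstruction in the supercritical regime). Your claim that ``the weight $K^{(n-6)p-(n+6)}>1$ \dots is the sign needed to run the comparison'' therefore gets the mechanism wrong, even though the exponent you quote matches the (oppositely signed) exponent displayed in the paper's Kelvin-transform lemma.

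Second, the step on which everything hinges --- the uniform lower bound $\bar\mu(x_0)\gtrsim|x_0|$ --- is not actually proved. You propose a compactness argument in which a degenerating critical radius produces a blow-up limit contradicting ``the rigidity part of Theorem~\ref{thm:luo-wei},'' but Theorem~\ref{thm:luo-wei} classifies only \emph{homogeneous} solutions, and the blow-up limit of your rescalings is not homogeneous a priori (homogeneity of blow-up limits is only established later, in Section~6, using the Pohozaev monotonicity, which itself presupposes the asymptotic symmetry being proved here --- so this route risks circularity). You would also need to track where the first touching point of $u-(u)_{x_0,\mu}$ escapes to (origin, boundary of $B_{r_0}$, or infinity after rescaling), and each case requires a separate argument. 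The paper avoids all of this by establishing the stronger statement directly: for every center $z\in B^*_{\varepsilon/2}$ and every radius $0<r\leqslant|z|$ one has $u_{z,r}\leqslant u$ on $B_1\setminus(B_r(z)\cup\{0\})$, proved by the integral comparison of \cite[Lemma~3.2]{arxiv:1901.01678}, where the non-removability of the singularity at the origin (which lies in the exterior region) rules out the touching scenario. As written, your sketch leaves the central estimate unestablished.
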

	
	\begin{proof}
		Initially, if the origin is a removable singularity, then the conclusion is clear. Hence, we suppose that the origin is a non-removable singularity.
		
		We divide the proof into some claims.
		
		\noindent{\bf Claim 1:} There exists small $0<\varepsilon\ll1$ such that for any $z \in B^*_{\varepsilon/2}$,  it holds
		\begin{equation}\label{asymptoticmovingplanes}
			u_{z, r}\leqslant u  \quad {\rm in} \quad B_{1} \setminus\left(B_{r}(z)\cup\{0\}\right) \quad \mbox{for} \quad 0<r \leqslant|z|.
		\end{equation}
		
		\noindent Indeed, the proof follows almost the same lines as the one in \cite[Lemma~3.2]{arxiv:1901.01678}, so we omit it.
		
		In the next claim, we provide some estimates to be used later in the proof.
		
		\noindent{\bf Claim 2:} There exists $z\in B^*_{\varepsilon/2}$, $0<r<|z|$ and $\mu_*\gg1$ large such that
		\begin{equation}\label{estimatesradius}
			\frac{y_{\mu}}{|y_{\mu}|^{2}}-z=\left(\frac{r}{\left|y /|y|^{2}-z\right|}\right)^{2}\left(\frac{y}{|y|^{2}}-z\right) \quad {\rm and} \quad \frac{|y_{\mu}|}{|y|} \leqslant \frac{1}{r}\left|\frac{y}{|y|^{2}}-z\right| \quad {\rm for \ any} \quad \mu>\mu_*.
		\end{equation}
		Here $y_{\mu}=y+2(\mu-y\cdot{\bf e}){{\bf e}}$ is the reflection of $y$ about the hyperplane $\partial H_{\mu}({\bf e})$, where $H_{\mu}({\bf e})=\{x: \langle x,{\bf e}\rangle>\mu\}$ and ${\bf e}\in\mathbb{S}^{n-1}$. In other words, ${y_{\mu}}{|y_{\mu}|^{-2}}$ is the reflection point of ${y}{|y|^{-2}}$ about $\partial B_{r}(z)$. 
		
		\noindent As matter of fact,  choosing $r=|z|$, it involves an elementary computation, as follows
		\begin{equation*}
			z=\frac{y}{|y|^{2}} +\frac{\left|y_{\mu}\right|^{2}}{|y|^{2}-\left|y_{\mu}\right|^{2}}\left(\frac{y}{|y|^{2}} -\frac{y_{\mu}}{\left|y_{\mu}\right|^{2}} \right)=\frac{\left(y-y_{\mu}\right)}{|y|^{2}-\left|y_{\mu}\right|^{2}}.
		\end{equation*}
		
		Next, we establish a comparison involving the Kelvin transform of a component solution with itself.
		
		\noindent{\bf Claim 3:} For any $\mu>\frac{1}{\varepsilon}$ and ${\bf e}\in\partial B_{1}$, if $\langle x,{\bf e}\rangle>\mu$ and $|y_{\mu}|>1$, it holds $u_{0,1}(y) \leqslant u_{0,1}\left(y_{\mu}\right).$
		
		\noindent In fact, to prove the last inequality, let us note first that $y\in B_{1/\varepsilon}$, if and only if, ${y}{|y|^{-2}}\in B_{\varepsilon}$.
		Now given $y\in \mathbb{R}^{n}$ such that $\langle y,{\bf e}\rangle>\mu$, $|y_{\mu}|>1$ and $0<r<|z|<{\varepsilon}/{2}$ satisfying \eqref{estimatesradius}. Let us define $x={y}{|y|^{-2}}$ and $x_{z,r}={y_{\mu}}{|y_{\mu}|^{-2}}$.
		Then, since $\langle y,{\bf e}\rangle>\mu>{\varepsilon}^{-1}$ and $\|y_{\mu}|>1$, we have $x \in B_{r}(z)$ and $x_{z,r}\in B_{1}\setminus B_{r}(z)$. Hence, using \eqref{asymptoticmovingplanes} and \eqref{estimatesradius}, we find  
		\begin{equation*}
			u_{0,1}(y)\leqslant u_{0,1}(y_{\mu}),
		\end{equation*}
		which proves the claim.
		
		Ultimately, using Claim 3, we invoke \cite[Theorem~6.1 and Corollary~6.2]{MR982351} to find
		$C>0$, independent of $\varepsilon>0$, such that if $|y|\geqslant |x|+C\varepsilon^{-1}$, it follows $u_{0,1}(y) \leqslant u_{0,1}(x)$.
		Therefore, since $u_{0,1}$ is positive and satisfies $-\Delta u_{0,1}\geqslant0$ and $\Delta^2u_{0,1}\geqslant0$, the last inequality implies
		\begin{equation*}
			u_{0,1}(|x|)=\left(1+\mathcal{O}\left(\frac{1}{R}\right)\right)\left(\inf _{\partial B_{R}} u_{0,1}\right) \quad {\rm as} \quad R \rightarrow +\infty,
		\end{equation*} 
		uniformly on $\partial B_{R}$, which in terms of $u$ implies the desired asymptotic radial symmetry with respect to the origin, and
		the proof is concluded.
	\end{proof}
	
	\section{The limiting Pohozaev levels}\label{sec:limitinglevels}
	After proving the radial symmetry of singular solutions to \eqref{ourPDE}, we shall classify them in the blow-up and shrink-down limit.
	The idea is to use a blow-up/shrink-down analysis, which comes from tangent cone techniques from minimal hypersurface theory, and will be described in the sequel.
	
	For any $u$ solution to \eqref{ourPDE} and $\lambda>0$, let us define the following $\lambda$-rescaling solution given by
	\begin{equation}\label{scalingfamily}
		\widehat{u}_{\lambda}(x):=\lambda^{\gamma_p}u(\lambda x),
	\end{equation}
	where we recall that $\gamma_p={6}{(p-1)^{-1}}$.
	Notice that the $\lambda$-rescaled solution is still a positive  solution to \eqref{ourPDE} with $R=\lambda^{-1}$.
	Moreover, we get the following scaling invariance
	\begin{equation}\label{scaleinvariance}
		\mathcal{P}_{\rm sph}(r,\widehat{u}_{\lambda})=\mathcal{P}_{\rm sph}(\lambda r,u).
	\end{equation}
	This follows by directly using the inverse cylindrical transform as in Remark~\ref{rmk:limitrelation}.
	Besides, by a blow-up (resp. shrink-down) solution $u_0$ (resp. $u_{\infty}$) to \eqref{ourPDE}, we mean the limit $u_{0}:=\lim_{\lambda\rightarrow0}\widehat{u}_{\lambda}$ (resp. $u_{\infty}:=\lim_{\lambda\rightarrow+\infty}\widehat{u}_{\lambda}$).
	In fact, utilizing some a priori estimates and the compactness of the family $\{\widehat{u}_{\lambda}\}_{\lambda>0}\subset C^{6,\alpha}_{\rm loc}(\mathbb{R}^n)$ for some $\alpha\in(0,1)$, these limits will be proven to exist.
	Next, we study the limit Pohozaev functional both as $r\rightarrow0$ (blow-up) and $r\rightarrow+\infty$ (shrink-down), this will give the desired information about the asymptotic behavior for solutions to \eqref{ourPDE}.
	
	Here is our main result of this section:
	\begin{proposition}\label{prop:limitlocalbehavior}
		Let $u$ be a positive  singular solution to \eqref{ourlimitPDE} with $p\in(1,2^{\#}-1)$.
		Assume that $u$ is homogeneous of degree $-\gamma_p$.
		\begin{itemize}
			\item[{\rm (a)}] If $p\in(1,2_{\#}]$, then $u\equiv0$; 
			\item[{\rm (b)}] If $p\in(2_{\#},2^{\#}-1)$, then $\mathcal{P}_{\rm cyl}(r,u)$ converges both as $r\rightarrow0$ and $r\rightarrow+\infty$, namely
			\begin{equation}\label{limintingconstant}
				\{\mathcal{P}_{\rm sph}(0, u),\mathcal{P}_{\rm sph}(+\infty, u)\}=\{-\omega_{n-1}\ell^*_p,0\}, \quad \mbox{where} \quad \ell^*_p=\frac{p-1}{2(p+1)}K_0(n,p)^{\frac{p+1}{p-1}}.
			\end{equation} 
		\end{itemize}    
	\end{proposition}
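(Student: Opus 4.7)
The proof is built almost entirely on Theorem~\ref{thm:luo-wei}, combined with an explicit computation of the Pohozaev density on the resulting profiles. The key observation is that any positive singular homogeneous solution of degree $-\gamma_p$ pulls back through the Emden--Fowler transform $\mathfrak{F}$ of Section~\ref{sec:changeofvariables} to a cylindrical function that is constant in both $t$ and $\theta$, so that every derivative term appearing in $\mathcal{H}$ vanishes automatically.

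Part (a) is nothing more than a restatement of Theorem~\ref{thm:luo-wei}(a): in the range $p\in(1,2_{\#}]$, no nontrivial positive singular homogeneous solution of degree $-\gamma_p$ exists, so $u\equiv0$ and $\mathcal{P}\equiv 0$ trivially.

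For part (b), Theorem~\ref{thm:luo-wei}(b) identifies the only non-trivial homogeneous candidate as the Emden profile $u^*(x)=K_0(n,p)^{1/(p-1)}|x|^{-\gamma_p}$, which $\mathfrak{F}$ sends to the constant cylindrical function $v^*(t,\theta)\equiv K_0(n,p)^{1/(p-1)}$. Since every partial derivative of $v^*$ vanishes, the entire derivative block of $\mathcal{H}_{\rm rad}+\mathcal{H}_{\rm ang}$ collapses and only the algebraic contributions $\tfrac{K_0}{2}(v^*)^2$ and $\tfrac{|v^*|^{p+1}}{p+1}$ remain. The requirement that the constant $v^*$ solve the cylindrical ODE $-P^3_{\rm cyl}v^*=(v^*)^p$ (only the zeroth-order coefficient surviving) becomes an algebraic identity relating $(v^*)^{p-1}$ to the zeroth-order coefficient of $P^3_{\rm cyl}$, from which $(v^*)^{p+1}$ is proportional to $K_0(n,p)(v^*)^2$. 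Substituting and simplifying, the two surviving algebraic pieces assemble into
\[
\mathcal{H}(t,\theta,v^*)=K_0(n,p)^{\frac{p+1}{p-1}}\left(-\tfrac{1}{2}+\tfrac{1}{p+1}\right)=-\tfrac{p-1}{2(p+1)}K_0(n,p)^{\frac{p+1}{p-1}}=-\ell^*_p,
\]
a quantity independent of $t$ and $\theta$. Integrating over $\mathbb{S}^{n-1}$ gives $\mathcal{P}_{\rm cyl}(t,v^*)\equiv -\omega_{n-1}\ell^*_p$, which converges trivially to the same constant as $t\to\pm\infty$. Together with the trivial homogeneous solution $u\equiv 0$ producing the value $0$, Remark~\ref{rmk:pohozaevspherical} converts these cylindrical constants into the spherical limits $\mathcal{P}_{\rm sph}(0,u)$ and $\mathcal{P}_{\rm sph}(+\infty,u)$, yielding the set equality claimed in \eqref{limintingconstant}.

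The only delicate step is tracking the sign of the zeroth-order coefficient of $P^3_{\rm cyl}$ produced by the Emden--Fowler substitution: it must enter with the sign that, when paired with the positive Luo--Wei constant $K_0(n,p)$, produces the combination $-\tfrac{1}{2}+\tfrac{1}{p+1}=-\tfrac{p-1}{2(p+1)}$ rather than its opposite. Once this single sign is correctly matched to the explicit formulas \eqref{autonomouscoefficients1}--\eqref{autonomouscoefficients2}, the entire computation reduces to a direct algebraic substitution, and the proposition follows.
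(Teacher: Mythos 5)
Your proof is correct for the statement as literally written, but it takes a genuinely different (and much shorter) route than the paper, and the difference matters for how the proposition is meant to be used. You read the hypothesis ``$u$ is homogeneous of degree $-\gamma_p$'' at face value: Theorem~\ref{thm:luo-wei} then forces $u\equiv 0$ or $u=K_0(n,p)^{1/(p-1)}|x|^{-\gamma_p}$, the Emden--Fowler image is a constant, all derivative terms in $\mathcal{H}_{\rm rad}+\mathcal{H}_{\rm ang}$ drop out, and the two surviving algebraic terms combine to $-\tfrac{p-1}{2(p+1)}K_0(n,p)^{\frac{p+1}{p-1}}$. That computation is right, and you correctly flag the one genuine trap: the zeroth-order coefficient $K_0(n,p)$ of $P^3_{\rm cyl}$ in \eqref{autonomouscoefficients1} is the \emph{negative} of the constant $K_0(n,p)$ appearing in Theorem~\ref{thm:luo-wei} (the paper overloads the notation), so the term $\tfrac{K_0}{2}v^2$ in $\mathcal{H}_{\rm rad}$ contributes $-\tfrac12(v^*)^{p+1}$, not $+\tfrac12(v^*)^{p+1}$. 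The paper, by contrast, does not use the homogeneity hypothesis at all: it runs a blow-up/shrink-down (tangent-cone) argument, extracting limits $u_0$, $u_\infty$ of the rescalings $\widehat u_\lambda$ via the a priori bound of Lemma~\ref{lm:limituppperestiaamte} and the compactness of Lemma~\ref{lm:limitcompactness}, showing via the scaling invariance \eqref{scaleinvariance} and monotonicity that $\mathcal{P}_{\rm sph}(\cdot,u_0)$ is constant (Lemma~\ref{lm:limitlimitinglevels}), hence $u_0$ is homogeneous (Lemma~\ref{lm:homogeneity}), and only then classifying and evaluating. What the paper's longer route buys is the statement for \emph{general} singular solutions of \eqref{ourlimitPDE}: for a genuinely homogeneous $u$ the Pohozaev functional is constant in $r$, so $\mathcal{P}_{\rm sph}(0,u)=\mathcal{P}_{\rm sph}(+\infty,u)$ and the two-element set equality in \eqref{limintingconstant} degenerates to a singleton --- a sign that the homogeneity hypothesis in the statement is vestigial and the intended content is the dichotomy for arbitrary $u$. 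If you intend the latter, your argument is missing the entire compactness/monotonicity machinery; for the literal statement, it is complete. One small wording fix: homogeneity alone only makes $v=\mathfrak{F}(u)$ independent of $t$; independence of $\theta$ comes from the classification (Theorem~\ref{thm:luo-wei} or Lemma~\ref{lm:blowupclassification}), which you do invoke, so state it in that order.
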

	
	First, we prove that the invariance of the Pohozaev invariant is equivalent to the homogeneity of the blow-up limit solutions to \eqref{ourlimitPDE}.
	A similar result can also be found in \cite{MR3190428}, where a different type of functional is considered.
	
	\begin{lemma}\label{lm:homogeneity}
		Let $u$ be a positive singular solution to \eqref{ourlimitPDE} with $p\in(2_{\#},2^{\#}-1)$.
		Then, $\mathcal{P}_{\rm sph}(r,\mathcal{U})$ is constant, if and only if, for any $r\in(r_{1},r_{2})$  with $0<r_1\leqslant r_2<R<+\infty$, $u$ is homogeneous of degree $-\gamma_p>0$ in $B_{r_{2}}\setminus\bar{B}_{r_{1}}$, that is,
		\begin{equation*}
			u(x)=|x|^{-\gamma_p} u\left(\frac{x}{|x|}\right) \quad {\rm in} \quad B_{r_{2}}\setminus\bar{B}_{r_{1}}.
		\end{equation*}
	\end{lemma}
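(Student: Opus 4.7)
The plan is to pass through the Emden--Fowler transform \eqref{cyltransform} and exploit the monotonicity formula \eqref{eq:monotonicityformula} from Proposition~\ref{lm:monotonicityformula}. Setting $v=\mathfrak{F}(u)$ and $t=\ln r$, the spherical Pohozaev functional and the cylindrical one are related by $\mathcal{P}_{\rm sph}(r,u)=\mathcal{P}_{\rm cyl}(t,v)$ (see Remark~\ref{rmk:pohozaevspherical}), so $\mathcal{P}_{\rm sph}(\cdot,u)$ is constant on $(r_1,r_2)$ if and only if $\mathcal{P}_{\rm cyl}(\cdot,v)$ is constant on $(\ln r_1,\ln r_2)$.

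For the easy direction, if $u$ is homogeneous of degree $-\gamma_p$ in the annulus $B_{r_2}\setminus \bar B_{r_1}$, then by the definition of the transform $v(t,\theta)=e^{\gamma_p t}u(e^t\theta)=u(\theta/|\theta|)$ is independent of $t$ in the cylindrical slab $(\ln r_1,\ln r_2)\times\mathbb{S}^{n-1}$. Hence every $t$-derivative in the integrand on the right-hand side of \eqref{eq:monotonicityformula} vanishes identically, so $\frac{\ud}{\ud t}\mathcal{P}_{\rm cyl}(t,v)\equiv 0$ there, giving the constancy of $\mathcal{P}_{\rm sph}(r,u)$.

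For the converse, assume $\mathcal{P}_{\rm sph}(r,u)$ is constant on $(r_1,r_2)$. By Remark~\ref{rmk:signoncoefficients}, in the Gidas--Spruck regime $p\in(2_{\#},2^{\#}-1)$ the coefficients satisfy $K_5,K_1,L_1\geqslant 0$ and $K_3,J_3,J_1\leqslant 0$, so every one of the five terms on the right-hand side of \eqref{eq:monotonicityformula} is pointwise non-positive. Since their total integral vanishes on the relevant interval, each term vanishes a.e. The key reduction is then: verify that $K_1(n,p)>0$ strictly for $p\in(2_{\#},2^{\#}-1)$, from which one deduces $\partial_t v\equiv 0$ in $(\ln r_1,\ln r_2)\times\mathbb{S}^{n-1}$. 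Translating this back through $\mathfrak{F}^{-1}$, $v(t,\theta)=V(\theta)$ is independent of $t$, which is precisely the statement that $u(x)=|x|^{-\gamma_p}u(x/|x|)$ on $B_{r_2}\setminus\bar B_{r_1}$.

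The main obstacle is the coefficient check: one must confirm that at least one of the coefficients $K_1$, $K_5$ or $L_1$ which multiplies a pure radial $t$-derivative is strictly positive throughout the open interval $(2_{\#},2^{\#}-1)$ (rather than merely non-negative as stated in Remark~\ref{rmk:signoncoefficients}), since a weak sign is not enough to conclude $\partial_t v\equiv 0$. This amounts to an explicit algebraic inspection of the formulas \eqref{autonomouscoefficients1}--\eqref{autonomouscoefficients2} for $K_1(n,p)$, using that the vanishing occurs exactly at the endpoints $p=2_{\#}$ and $p=2^{\#}-1$ (as recorded in \eqref{coeficcientslower} and \eqref{coeficcientsupper}). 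Once this strict positivity is in place, the deduction $\partial_t v\equiv 0$ and hence homogeneity is immediate.
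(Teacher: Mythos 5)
Your argument takes a genuinely different route from the paper's. The paper does not integrate the monotonicity formula at all: it observes that $K_0(n,p)\neq 0$ for $p\neq 2^{\#}-1$ and deduces from the constancy of $\mathcal{P}_{\rm sph}$ (via Remark~\ref{rmk:pohozaevspherical}) the first-order relation $\partial_{\nu}u=\gamma_p r^{-1}u$ on each sphere $\partial B_r$, which integrates directly to homogeneity. Your route---constancy forces the right-hand side of \eqref{eq:monotonicityformula} to vanish, and a strictly signed coefficient then forces $\partial_t v\equiv 0$---is instead the argument the paper uses later in Lemma~\ref{lm:limitinglevesgidasspruck}, where the authors assert $K_5(n,p),K_1(n,p)>0$ and $K_3(n,p)<0$. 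So the strategy is legitimate in principle, and the forward implication (homogeneity $\Rightarrow$ constancy) is fine.

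However, the converse direction as you have written it has a genuine gap, and it is exactly the step you flag but do not carry out. Your proposed justification for the strict positivity of $K_1(n,p)$ on $(2_{\#},2^{\#}-1)$ --- ``the vanishing occurs exactly at the endpoints $p=2_{\#}$ and $p=2^{\#}-1$'' --- is contradicted by the paper's own appendix: \eqref{coeficcientslower} records $K_{1,\#}(n)=-8(n-6)(n-2)(n-4)$, which is strictly \emph{negative} for $n\geqslant 7$, not zero. By continuity of $K_1(n,\cdot)$ this gives $K_1(n,p)<0$ for $p$ slightly above $2_{\#}$ (and a direct evaluation of \eqref{autonomouscoefficients1}, e.g.\ at $n=7$, $p=10$, also yields a negative value; note that the factor $np-6p-n-6=(n-6)\bigl(p-(2^{\#}-1)\bigr)$ is negative throughout the range). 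At such $p$ the term $-K_1|\partial_t v|^2$ in \eqref{eq:monotonicityformula} is non-negative, so the five summands do not all share one sign, the ``total integral zero $\Rightarrow$ each term zero'' step collapses, and you cannot conclude $\partial_t v\equiv 0$. You cannot simultaneously lean on Remark~\ref{rmk:signoncoefficients} and on the appendix formulas, since they are inconsistent with each other; to make your route rigorous you would have to resolve that inconsistency and actually establish a strict sign for one of the coefficients multiplying a term that controls $\partial_t v$ itself (not $\partial_t^{(3)}v$, since $\partial_t^{(3)}v\equiv 0$ on a bounded interval does not imply $\partial_t v\equiv 0$). As it stands, the crux of the converse is unproven and the sketch offered for it is false.
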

	
	\begin{proof}
		Notice that if $p\neq 2^{\#}-1$, then $K_0(n,p)\neq 0$.
		Thus, supposing that $\mathcal{P}_{\rm sph}(r,u)$ is constant for $r_{1}<r<r_{2}$, together with Remark~\ref{rmk:pohozaevspherical} yields that $\partial_{\nu}u=\gamma_pr^{-1}u$ on $\partial B_{r}$ for any $r_{1}<r<r_{2}$, where $\nu$ is the unit normal pointing towards the origin. 
		Therefore, $u$ is homogeneous of degree $-\gamma_p$ in $B_{r_{2}}\setminus\bar{B}_{r_{1}}$, which concludes the proof.
	\end{proof}
	
	The following lemma provides an upper bound estimate for singular solutions to \eqref{ourPDE}.
	
	\begin{lemma}\label{lm:limituppperestiaamte}
		Let $u$ be a positive  singular solution to \eqref{ourlimitPDE} with $p\in(1,2^{\#}-1)$.
		Then,
		\begin{equation*}
			u(x) \leqslant \left(\frac{p-1}{2n}\right)^{-\frac{1}{p-1}}|x|^{-\gamma_p} \quad {\rm in}  \quad \mathbb{R}^{n} \setminus\{0\}.
		\end{equation*}
	\end{lemma}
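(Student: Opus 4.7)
The plan is to combine the Green--type integral representation from Lemma~\ref{lm:integralrepresentation}(ii) with the radial monotonicity of spherical averages implied by the superharmonicity hypotheses $-\Delta u,\Delta^2 u\geq 0$ that are standing throughout the paper for singular solutions.

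First, I would reduce to the radial case. Under the chain $u,\,-\Delta u,\,\Delta^2 u\geq 0$, each of these three non-negative functions is superharmonic on $\mathbb R^n\setminus\{0\}$; integrating the radial form of the superharmonic inequality (together with the decay at infinity that follows from the integrability in Lemma~\ref{lm:integrability}) shows that their spherical averages $\overline u,\,\overline{-\Delta u},\,\overline{\Delta^2 u}$ are all non-increasing in $r$. In particular $\overline u(\rho)\geq \overline u(r)$ whenever $\rho\leq r$, and by Proposition~\ref{prop:asymptoticsymmetry} applied in the limit case $R=+\infty$ one has $u(x)=(1+\mathrm{o}(1))\overline u(|x|)$ as $x\to 0$, so it suffices to produce the stated pointwise bound for $\overline u$; this extends from a neighborhood of the origin to all of $\mathbb R^n\setminus\{0\}$ via the scaling invariance \eqref{scaleinvariance}.

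Next, I would use the Green representation
$$u(x)\,=\,c_n\int_{\mathbb R^n}|x-y|^{6-n}\,u(y)^p\,\ud y$$
provided by Lemma~\ref{lm:integralrepresentation}(ii), with $c_n>0$ the normalization constant of the fundamental solution of $(-\Delta)^3$ on $\mathbb R^n$. Fixing $r=|x|$ and restricting the integration to the ball $\{|y|<r/2\}$, we have $|x-y|\leq 3r/2$, so since $n>6$ the kernel satisfies $|x-y|^{6-n}\geq(3r/2)^{6-n}$. Pairing this with Jensen's inequality in the angular variable and with the monotonicity $\overline u(\rho)\geq\overline u(r)$, I arrive at a self--improving inequality of the form
$$\overline u(r)\,\geq\,\kappa(n)\,r^{6}\,\overline u(r)^{p}$$
for an explicit dimensional constant $\kappa(n)>0$. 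Rearranging yields $\overline u(r)^{p-1}\leq \kappa(n)^{-1}r^{-6}$, i.e.\ $u(x)\leq \kappa(n)^{-1/(p-1)}|x|^{-\gamma_p}$.

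The principal obstacle is to pin down the sharp value $\kappa(n)=(p-1)/(2n)$. The exponent $-\gamma_p$ is forced by scaling, but the precise coefficient requires a careful optimization: I would replace the dyadic cutoff $r/2$ by a free radius $s\in(0,r)$ and optimize, tracking the Green constant $c_n$ together with the surface measure $\omega_{n-1}$ and the radial integral $\int_0^s\rho^{n-1}\,\ud\rho=s^n/n$ (which is where the factor of $n$ in the denominator appears); the factor $p-1$ then enters via the algebraic rearrangement of the inequality $\overline u\gtrsim \kappa(n)\,r^6\,\overline u^p$, equivalently through differentiation of $\overline u^{1-p}$. I expect this bookkeeping, rather than any structural difficulty, to be the main technical step --- a careless application of Jensen gives the correct scaling but a strictly smaller constant.
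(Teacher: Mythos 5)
Your route (Green representation from Lemma~\ref{lm:integralrepresentation}(ii) plus Jensen and radial monotonicity) does yield an estimate of the form $\overline u(r)\geqslant \kappa\, r^{6}\,\overline u(r)^{p}$ and hence $u(x)\leqslant \kappa^{-1/(p-1)}|x|^{-\gamma_p}$, so it would prove a bound with the correct power. But the step you defer as ``bookkeeping'' is in fact the crux, and it cannot be completed within your framework: the constant produced by the kernel argument is an expression in the Green normalization $c_n$, the surface measure $\omega_{n-1}$, and the geometric cutoff, and no optimization over the free radius $s$ will make it equal to $\tfrac{p-1}{2n}$. In particular the factor $p-1$ never enters $\kappa$ through your argument --- rearranging $\overline u\geqslant\kappa r^6\overline u^{p}$ only moves $p$ into the exponent $-\tfrac{1}{p-1}$, not into $\kappa$ itself. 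So as written the proposal proves a weaker statement ($u\leqslant C(n,p)|x|^{-\gamma_p}$ for some constant), not the lemma as stated.

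The idea you are missing is the Lions--Osserman comparison that the paper uses. After invoking \cite[Theorem~6.1]{MR4438901} to get $-\Delta u\geqslant0$, $\Delta^2u\geqslant0$ (these are \emph{derived}, not assumed, in the entire-space case) and the extended maximum principle to get $\liminf_{x\to0}u>0$, one sets $\varphi=u^{1-p}$ and checks the differential inequality
\begin{equation*}
\Delta\varphi\;\geqslant\;\frac{p}{p-1}\,\frac{|\nabla\varphi|^{2}}{\varphi}+(p-1)\;\geqslant\;p-1 ,
\end{equation*}
so that $\widetilde\varphi(x)=\varphi(x)-\tfrac{p-1}{2n}|x|^{2}$ is subharmonic; the coefficient $\tfrac{p-1}{2n}$ is forced as the unique $c$ with $\Delta\bigl(c|x|^{2}\bigr)=2nc=p-1$, which is precisely where the stated constant comes from. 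The maximum principle on $B_r$ (using boundedness of $\widetilde\varphi$ near the origin) then gives $\sup_{\partial B_r}\varphi\geqslant\tfrac{p-1}{2n}r^{2}$, i.e.\ $\inf_{\partial B_r}u\leqslant\bigl(\tfrac{p-1}{2n}\bigr)^{-1/(p-1)}r^{-\gamma_p}$, and Proposition~\ref{prop:asymptoticsymmetry} upgrades the infimum to a pointwise bound, exactly as in the final step of your reduction. If you only need the estimate as an input to the compactness argument of Lemma~\ref{lm:limitcompactness}, your non-sharp constant suffices; to prove the lemma as stated you must replace the kernel estimate by this comparison argument.
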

	
	\begin{proof}
		By \cite[Theorem~6.1]{MR4438901}, we know that $-\Delta u\geqslant 0$ and $\Delta^2 u\geqslant 0$ in $\mathbb{R}^{n} \setminus\{0\}$, which, by using the extended maximum principle \cite[Theorem~1]{MR1814364}, gives us
		\begin{equation*}\label{limitlevel1}
			\liminf_{x \rightarrow 0} u(x)>0.
		\end{equation*}
		Considering $\varphi=u^{1-p}$, a direct computation, provides
		\begin{equation*}
			\Delta \varphi \geqslant \frac{p}{p-1} \frac{|\nabla \varphi|^{2}}{\varphi}+p-1 \quad \mbox{in} \quad \mathbb{R}^{n} \setminus\{0\}.
		\end{equation*}
		Thus, for any $r>0$, let us consider 
		the auxiliary function $\widetilde{\varphi}(x)=\varphi(x)-\frac{p-1}{2n}|x|^{2}$, which satisfies $-\Delta \widetilde{\varphi}\leqslant0$ in $B^*_{r}$.
		Furthermore, \eqref{limitlevel1} implies that $\widetilde{\varphi}$ is bounded close to the origin, and thus, again, by the extended maximum principle, we find
		\begin{equation*}
			0 \leqslant\limsup_{x \rightarrow 0} \widetilde{\varphi}(x) \leqslant \sup_{\partial B_{r}} \widetilde{\varphi}=\sup_{\partial B_{r}} \varphi-\frac{p-1}{2n} r^{2},
		\end{equation*}
		which yields
		\begin{equation*}
			\inf_{\partial B_{r}} u \leqslant\left(\frac{p-1}{2 n}\right)^{-\frac{1}{p-1}} r^{-\gamma_p}.
		\end{equation*}
		Finally, a direct application of Proposition~\ref{prop:asymptoticsymmetry} finishes this proof.
	\end{proof}
	
	As a consequence of this uniform upper bound, we prove the compactness of the family $\{\widehat{u}_{\lambda}\}_{\lambda>0}\subset C^{6,\alpha}_{\rm loc}(\mathbb{R}^n)$, for some $\alpha\in(0,1)$, which provides the existence of both blow-up and shrink-down limits for the scaling family defined by \eqref{scalingfamily}.
	
	\begin{lemma}\label{lm:limitcompactness}
		Let $u$ be a positive singular solution to \eqref{ourlimitPDE} with $p\in(1,2^{\#}-1)$. 
		Then, $\{\widehat{u}_{\lambda}\}_{\lambda>0}\subset C^{6,\alpha}_{\rm loc}(\mathbb{R}^n)$ is uniformly bounded, for some $\alpha\in(0,1)$.
	\end{lemma}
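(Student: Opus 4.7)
The plan is to derive a pointwise bound on the rescaled family that is independent of $\lambda$ and then bootstrap through standard elliptic regularity for the tri-Laplacian. The key observation is that the rescaling $\widehat{u}_{\lambda}(x)=\lambda^{\gamma_p}u(\lambda x)$ is precisely calibrated to leave \eqref{ourlimitPDE} scale-invariant, so the uniform bound in Lemma~\ref{lm:limituppperestiaamte} transfers directly: for every $\lambda>0$ and every $x\in\mathbb{R}^n\setminus\{0\}$,
\begin{equation*}
\widehat{u}_{\lambda}(x)=\lambda^{\gamma_p}u(\lambda x)\leqslant\lambda^{\gamma_p}\left(\tfrac{p-1}{2n}\right)^{-\frac{1}{p-1}}|\lambda x|^{-\gamma_p}=\left(\tfrac{p-1}{2n}\right)^{-\frac{1}{p-1}}|x|^{-\gamma_p}.
\end{equation*}
Hence for any compact $K\Subset\mathbb{R}^n\setminus\{0\}$ there is a constant $C_K$, independent of $\lambda$, such that $\|\widehat{u}_{\lambda}\|_{L^{\infty}(K)}\leqslant C_K$.

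Next I would upgrade this $L^{\infty}$ bound to $C^{6,\alpha}$ by a standard bootstrap for $(-\Delta)^3\widehat{u}_{\lambda}=f_p(\widehat{u}_{\lambda})$. Pick $x_0\in\mathbb{R}^n\setminus\{0\}$ and a radius $\rho_0<|x_0|/4$ so that $B_{2\rho_0}(x_0)\subset\mathbb{R}^n\setminus\{0\}$. On $B_{2\rho_0}(x_0)$ the right-hand side $|\widehat{u}_{\lambda}|^{p-1}\widehat{u}_{\lambda}$ is bounded uniformly in $\lambda$. Interior $L^q$ estimates for the poly-harmonic operator (Agmon--Douglis--Nirenberg type) yield a uniform bound of $\widehat{u}_{\lambda}$ in $W^{6,q}(B_{3\rho_0/2}(x_0))$ for every $q<\infty$. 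Sobolev embedding then gives uniform $C^{5,\beta}$ bounds on $B_{3\rho_0/2}(x_0)$ for any $\beta\in(0,1)$, and in particular $f_p(\widehat{u}_{\lambda})$ is uniformly bounded in $C^{0,\alpha}(B_{3\rho_0/2}(x_0))$. A final application of interior Schauder estimates for the tri-Laplacian produces the desired uniform bound in $C^{6,\alpha}(B_{\rho_0}(x_0))$.

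Covering any compact subset of $\mathbb{R}^n\setminus\{0\}$ by finitely many such balls $B_{\rho_0}(x_0)$ promotes the local estimate to a uniform $C^{6,\alpha}$ bound on the compact set, which is the statement claimed (the notation $C^{6,\alpha}_{\rm loc}(\mathbb{R}^n)$ here should be read as $C^{6,\alpha}_{\rm loc}(\mathbb{R}^n\setminus\{0\})$, since the singularity at the origin is never removed by rescaling).

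I do not expect a genuine obstacle here: the only subtlety is verifying that the constants in the Schauder/$L^q$ estimates for the constant-coefficient operator $(-\Delta)^3$ scale correctly with the radius $\rho_0$, and since the equation is scale-invariant we can in fact reduce to a single unit-ball estimate by an auxiliary rescaling centered at $x_0$. The argument is a routine consequence of Lemma~\ref{lm:limituppperestiaamte} combined with classical higher-order elliptic regularity.
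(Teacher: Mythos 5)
Your argument is correct and follows essentially the same route as the paper: both proofs invoke the global upper bound of Lemma~\ref{lm:limituppperestiaamte}, observe that the scaling $\widehat{u}_{\lambda}$ preserves \eqref{ourlimitPDE} so the bound transfers uniformly in $\lambda$ on compact subsets of $\mathbb{R}^n\setminus\{0\}$, and then conclude by standard interior elliptic estimates for $(-\Delta)^3$. Your write-up merely spells out the $L^q$--Schauder bootstrap that the paper compresses into the phrase ``standard elliptic estimates,'' and your reading of $C^{6,\alpha}_{\rm loc}(\mathbb{R}^n)$ as $C^{6,\alpha}_{\rm loc}(\mathbb{R}^n\setminus\{0\})$ matches the paper's intent.
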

	
	\begin{proof}
		If the origin is a removable singularity of $\widehat{u}_{\lambda}$ for all $\lambda>0$, according to Theorem~\ref{thm:luo-wei}, $u$ is trivial and the conclusion follows.
		
		On the other hand, assuming that the origin is a removable singularity, Lemma~\ref{lm:limituppperestiaamte} provides that $\{\widehat{u}_{\lambda}\}_{\lambda>0}$ is globally bounded in $\mathbb{R}^{n}\setminus\{0\}$. Thus, we know that $\{\widehat{u}_{\lambda}\}_{\lambda>0}$ is uniformly bounded in each compact subset of $K\subset\mathbb{R}^{n} \setminus\{0\}$.
		Moreover, since for each $\lambda>0$, the scaling $\widehat{u}_{\lambda}$ also satisfies \eqref{ourlimitPDE}, it follows from standard elliptic estimates that $\{\widehat{u}_{\lambda}\}_{\lambda>0}$ is uniformly bounded in $C^{6,\alpha}(K)$, for some $\alpha\in(0,1)$, which concludes the proof.
	\end{proof}
	
	Recall that $\mathcal{P}_{\rm sph}(r, u)$ is the Pohozaev functional introduced in Remark~\ref{rmk:pohozaevspherical}, which by Proposition~\ref{lm:monotonicityformula} is monotonically nonincreasing in $r>0$ when $p\in(1,2^{\#}-1)$.
	
	\begin{lemma}\label{lm:limitlimitinglevels}
		Let $u_{0}$ $($or $u_{\infty}$$)$ be a positive singular blow-up $($or shrink-down$)$ solution to \eqref{ourPDE} under the family of scalings $\{\widehat{u}_{\lambda}\}_{\lambda>0}$. 
		Then,
		$\mathcal{P}_{\rm sph}(r, u_{0})\equiv\mathcal{P}_{\rm sph}(0, u)$ $($or $\mathcal{P}_{\rm sph}(r, u_{\infty})\equiv\mathcal{P}_{\rm sph}(\infty, u)$$)$ is constant
		for all $r>0$. In particular, both $u_{0}$ and $u_{\infty}$ are homogeneous of degree
		$-\gamma_p$.
	\end{lemma}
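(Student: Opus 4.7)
The plan is to combine three ingredients already at our disposal: the scaling invariance relation \eqref{scaleinvariance}, the compactness of the rescaled family provided by Lemma~\ref{lm:limitcompactness}, and the fact that constancy of $\mathcal{P}_{\rm sph}$ forces homogeneity via Lemma~\ref{lm:homogeneity}. Once constancy of $\mathcal{P}_{\rm sph}(r,u_0)$ (respectively $\mathcal{P}_{\rm sph}(r,u_\infty)$) is established, the homogeneity claim follows immediately as a corollary.

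First, I would fix an arbitrary $r>0$ and evaluate \eqref{scaleinvariance} at the scaling parameter $\lambda$, giving $\mathcal{P}_{\rm sph}(r,\widehat{u}_\lambda) = \mathcal{P}_{\rm sph}(\lambda r,u)$. For the blow-up case, letting $\lambda\to 0^+$ sends $\lambda r\to 0$, so by Remark~\ref{rmk:limitrelation} combined with the monotonicity from Proposition~\ref{lm:monotonicityformula} and the a priori bounds from Section~\ref{sec:upperbounds}, the right-hand side converges to the finite limit $\mathcal{P}_{\rm sph}(0,u)$. On the left-hand side, Lemma~\ref{lm:limitcompactness} furnishes a sequence $\lambda_k\to 0$ such that $\widehat{u}_{\lambda_k}\to u_0$ in $C^{6,\alpha}_{\rm loc}(\mathbb{R}^n\setminus\{0\})$. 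Since the integrand defining $\mathcal{P}_{\rm sph}(r,\cdot)$ on $\partial B_r$ is a polynomial expression in the derivatives of $u$ up to order five, the $C^{6,\alpha}_{\rm loc}$ convergence (which is uniform on the compact set $\partial B_r$) lets me pass to the limit under the integral and conclude
\begin{equation*}
  \mathcal{P}_{\rm sph}(r,u_0) = \lim_{k\to\infty}\mathcal{P}_{\rm sph}(r,\widehat{u}_{\lambda_k}) = \lim_{k\to\infty}\mathcal{P}_{\rm sph}(\lambda_k r,u) = \mathcal{P}_{\rm sph}(0,u).
\end{equation*}
As $r>0$ was arbitrary, $r\mapsto \mathcal{P}_{\rm sph}(r,u_0)$ is constantly equal to $\mathcal{P}_{\rm sph}(0,u)$. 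The shrink-down case is entirely analogous, sending $\lambda\to+\infty$ so that $\lambda r\to+\infty$; the existence of the limit $\mathcal{P}_{\rm sph}(\infty,u)$ is again guaranteed by the monotonicity together with the uniform upper bound from Lemma~\ref{lm:limituppperestiaamte}, and compactness yields convergence $\widehat{u}_\lambda\to u_\infty$ along a subsequence.

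Finally, since $\mathcal{P}_{\rm sph}(r,u_0)$ (respectively $\mathcal{P}_{\rm sph}(r,u_\infty)$) is constant on the whole interval $(0,+\infty)$, Lemma~\ref{lm:homogeneity} applies on every annulus $B_{r_2}\setminus\overline{B}_{r_1}$ with $0<r_1<r_2<+\infty$ and forces $u_0$ (respectively $u_\infty$) to be homogeneous of degree $-\gamma_p$ globally on $\mathbb{R}^n\setminus\{0\}$.

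The main technical subtlety is ensuring that the passage to the limit inside $\mathcal{P}_{\rm sph}(r,\cdot)$ is legitimate, since this functional involves derivatives up to order five and requires more than mere pointwise or $L^1_{\rm loc}$ convergence. This is precisely why Lemma~\ref{lm:limitcompactness} is stated in the strong topology $C^{6,\alpha}_{\rm loc}$: combined with standard elliptic regularity applied to the rescaled equation, it gives convergence in a topology strong enough to pass the limit in every term of the Pohozaev density. A secondary point worth addressing is that the limits $u_0$ and $u_\infty$ may a priori depend on the extracted subsequence; however, since every subsequential limit satisfies $\mathcal{P}_{\rm sph}(r,\cdot) \equiv \mathcal{P}_{\rm sph}(0,u)$ (or $\equiv \mathcal{P}_{\rm sph}(\infty,u)$), this ambiguity does not affect the stated conclusion.
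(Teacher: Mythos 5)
Your argument is correct and follows essentially the same route as the paper: scaling invariance \eqref{scaleinvariance} plus the monotonicity of $\mathcal{P}_{\rm sph}$ identifies the limit of the right-hand side, the $C^{6,\alpha}_{\rm loc}$ compactness from Lemma~\ref{lm:limitcompactness} lets you pass to the limit in the functional on $\partial B_r$, and Lemma~\ref{lm:homogeneity} then yields homogeneity. Your added remarks on why the limit passage is legitimate and on subsequence-independence only make explicit what the paper leaves implicit.
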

	
	\begin{proof}
		Let $\{\lambda_{k}\}_{k\in\mathbb{N}}\subset(0,+\infty)$  be a blow-up sequence such that $\lambda_{k}\rightarrow 0$, and  $u_{0}\in{C^{6,\alpha}(\mathbb{R}^n\setminus\{0\})}$ be its blow-up limit, that is, $\widehat{u}_{\lambda_k}\rightarrow u_{0}$ in ${C_{\rm loc}^{6,\alpha}(\mathbb{R}^n\setminus\{0\})}$ as $k\rightarrow+\infty$, for some $\alpha\in(0,1)$.
		Now, using Proposition~\ref{lm:monotonicityformula} and Lemma~\ref{lm:limitcompactness}, one concludes that there exists the limiting level $\mathcal{P}_{\rm sph}(0, u):=\lim_{r \rightarrow 0} \mathcal{P}_{\rm sph}(r, u)$.
		Moreover, due to the scaling invariance of the Pohozaev functional in \eqref{scaleinvariance}, for any $r>0$, it follows
		\begin{equation*}
			\mathcal{P}_{\rm sph}(r, u_{0})=\lim_{k \rightarrow +\infty}\mathcal{P}_{\rm sph}(r, \widehat{u}_{\lambda_k})=\lim_{k \rightarrow +\infty} \mathcal{P}_{\rm sph}(r\lambda_{k}, u)=\mathcal{P}_{\rm sph}(0,u),
		\end{equation*}
		which finishes the proof of the first assertion.
		Now, we can check that the homogeneity follows from Lemma~\ref{lm:homogeneity}.
		Finally, notice that the same argument can readily be employed, replacing the blow-up limit by the shrink-down limit, so we omit it here.
	\end{proof}
	
	\begin{lemma}\label{lm:blowupclassification}
		Let $u$ be a positive singular solution to \eqref{ourlimitPDE} with $p\in(1,2^{\#}-1)$. 
		Assume that $u$ is homogeneous of degree $-\gamma_p$.
		\begin{itemize}
			\item[{\rm (a)}] If $p\in(1,2_{\#}]$, then $u\equiv0$.
			\item[{\rm (b)}] If $p\in(2_{\#},2^{\#}-1)$, then either $u\equiv0$, or $u\equiv K_0(n,p)^{\frac{1}{p-1}}|x|^{-\gamma_p}$.
		\end{itemize}
	\end{lemma}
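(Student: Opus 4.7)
The plan is to combine the homogeneity of $u$ with the asymptotic radial symmetry of Proposition~\ref{prop:asymptoticsymmetry} to force $u$ to be exactly radial, and then to read off the admissible multiplicative constants from the equation using the super-harmonicity properties that hold automatically on $\mathbb{R}^n\setminus\{0\}$. The trivial case $u\equiv 0$ is allowed in both (a) and (b), so I shall assume throughout that $u\not\equiv 0$, in which case $u$ is a genuine positive singular solution and \cite[Theorem~6.1]{MR4438901} supplies $-\Delta u\geqslant 0$ and $\Delta^2 u\geqslant 0$ on $\mathbb{R}^n\setminus\{0\}$.

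First I would write $u(x)=r^{-\gamma_p}\phi(\theta)$ with $r=|x|$ and $\theta=x/|x|$. Applying Proposition~\ref{prop:asymptoticsymmetry} on any small punctured ball about the origin yields $u(x)=(1+\mathcal{O}(|x|))\overline{u}(x)$ as $x\to 0$. Because of the homogeneity, $\overline{u}(x)=r^{-\gamma_p}\overline{\phi}$, so the ratio $\phi(\theta)/\overline{\phi}$ is independent of $r$ yet is forced to equal $1+\mathcal{O}(r)$; letting $r\to 0$ gives $\phi\equiv\overline{\phi}$, a constant. Hence $u=c\,r^{-\gamma_p}$ for some constant $c\geqslant 0$.

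Next I would substitute this ansatz into \eqref{ourlimitPDE}. Iterating the identity $\Delta(r^{\alpha})=\alpha(\alpha+n-2)r^{\alpha-2}$ three times produces
\[
(-\Delta)^3(r^{-\gamma_p})=K_0(n,p)\,r^{-\gamma_p-6},
\]
so the equation reduces to the algebraic condition $c\,K_0(n,p)=c^p$, i.e.\ $c=0$ or $c^{p-1}=K_0(n,p)$. Evaluating the super-harmonicity relations $-\Delta u\geqslant 0$ and $\Delta^2 u\geqslant 0$ on the same ansatz forces $n-2-\gamma_p\geqslant 0$ and $n-4-\gamma_p\geqslant 0$ whenever $c>0$; consequently the sign of the factored expression $K_0(n,p)=\gamma_p(\gamma_p+2)(\gamma_p+4)(n-2-\gamma_p)(n-4-\gamma_p)(n-6-\gamma_p)$ matches the sign of $n-6-\gamma_p$, which is negative for $p<2_{\#}$, zero at $p=2_{\#}$, and positive for $p\in(2_{\#},2^{\#}-1)$. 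Hence in case~(a) the only admissible value is $c=0$, yielding $u\equiv 0$; in case~(b) the options are $c=0$ or $c=K_0(n,p)^{1/(p-1)}$, as claimed.

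The main obstacle I expect is justifying that the super-harmonicity conditions can be invoked on the whole $\mathbb{R}^n\setminus\{0\}$, since these are the source both of the applicability of Proposition~\ref{prop:asymptoticsymmetry} and of the sign information that eliminates the spurious positive constants in the subrange of $(1,2_{\#})$ where the unsigned factored form of $K_0(n,p)$ happens to be positive; both are supplied at once by \cite[Theorem~6.1]{MR4438901}. As an alternative, one may simply invoke Theorem~\ref{thm:luo-wei}, which already furnishes precisely this classification of positive singular homogeneous solutions and absorbs the whole analysis into a single reference.
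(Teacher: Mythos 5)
Your proof is correct, but it follows a genuinely different route from the paper's in part (a). The paper passes to the angular equation $\mathscr{L}_{\theta}v=f_p(v)$ on $\mathbb{S}^{n-1}$, decomposes the sixth order spherical operator into a composition of second order elliptic operators, invokes a minimum-principle argument to conclude that the angular part is constant, and then disposes of positive constants by asserting $K_0(n,p)\leqslant 0$ for $p\in(1,2_{\#}]$; part (b) is then handled, as you do, by homogeneity plus Proposition~\ref{prop:asymptoticsymmetry}. You instead apply the asymptotic radial symmetry uniformly in both regimes (after importing $-\Delta u\geqslant 0$, $\Delta^2u\geqslant 0$ from \cite[Theorem~6.1]{MR4438901}, exactly as the paper itself does in Lemma~\ref{lm:limituppperestiaamte}), reduce to $u=c\,r^{-\gamma_p}$, and read off the admissible constants from $c\,K_0(n,p)=c^p$ together with the sign constraints $n-2-\gamma_p\geqslant 0$ and $n-4-\gamma_p\geqslant 0$ that superharmonicity imposes when $c>0$. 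This is not merely a stylistic variant: as you correctly observe, the bare factored form of $K_0(n,p)$ is \emph{positive} on the subrange $\gamma_p\in(n-4,n-2)$ of the Serrin--Lions regime (e.g.\ $n=7$, $p\in(11/5,3)$), so the paper's unqualified claim ``$K_0(n,p)\leqslant 0$'' needs exactly the extra sign information you extract from $-\Delta u\geqslant 0$ and $\Delta^2u\geqslant 0$; your argument supplies the missing justification and is, to that extent, more robust. What you give up is that the paper's angular-operator argument in principle rules out nonconstant angular profiles without appealing to the moving-spheres machinery of Section~5, whereas your reduction leans entirely on Proposition~\ref{prop:asymptoticsymmetry}. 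Your closing remark is also accurate: the statement coincides with Theorem~\ref{thm:luo-wei}, so a one-line citation would suffice if one is willing to outsource the whole lemma.
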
 
	
	\begin{proof}
		Since $u$ is homogeneous of degree $-\gamma_p$, the Emden--Fowler transformation $v=\mathfrak{F}(u)$ given by \eqref{cyltransform} satisfies
		\begin{equation}\label{angularsystem}
			-\mathscr{L}_{\theta}v+f_p(v)=0 \quad \mbox{on} \quad \mathbb{S}_t^{n-1},
		\end{equation}
		where $\mathscr{L}_{\theta}:=\Delta_{\theta}^3+L_0(n,p)\Delta_{\theta}^{2}+J_{0}(n,p)\Delta_{\theta}v+K_0(n,p)$.
		Now we divide the the proof into two cases:
		
		\noindent{\bf Case 1:} $p\in(1,2_{\#}]$.
		
		\noindent Initially, one can verify that 
		$\mathscr{L}_{\theta}(v)\leqslant 0$ on $\mathbb{S}_t^{n-1}$.
		Next, observe that $\mathscr{L}_{\theta}$ is the composition of three elliptic operators.
		This, together with \cite[Theorem~1.2]{arXiv:2210.04619}, implies that $v\in C^6(\mathcal{C}_T)$ does not attain any strict local minimum on $\mathbb{S}_t^{n-1}$. 
		Therefore, since $\mathbb{S}_t^{n-1}$ is a compact manifold, it follows that $v$ is constant, which yields $v\equiv v_0$.
		Nevertheless, using that $K_0(n,p) \leqslant 0$, any positive constant solution to \eqref{angularsystem} is trivial.
		By using the inverse of the Emden--Fowler transformation, it holds that $u$ is trivial on $\partial B_{1}$, which by the superharmonicity property, implies that $u$ is trivial in the whole domain.
		This conclusion finishes the proof of the first case, and so part (a) of the lemma follows.
		
		\noindent{\bf Case 2:} $p\in(2_{\#},2^{\#}-1)$.
		
		\noindent Assume that $u$ is a nontrivial limit solution in the punctured space. 
		Hence, since each component of $u$ is positive and satisfies $-\Delta u\geqslant0$ and $\Delta^2 u\geqslant0$, it quickly follows that $u>0$ in $\mathbb{R}^n\setminus\{0\}$. 
		By homogeneity, the origin is a non-removable singularity of $u\in C^6(\mathbb R^n\setminus\{0\})$. 
		Hence, by Proposition~\ref{prop:asymptoticsymmetry}, $u$ is radially symmetric; thus, $u\equiv u_0$ is a positive constant vector.
		Moreover, by \eqref{angularsystem}, it holds $u_0=K_0(n,p)^{\frac{1}{p-1}},$
		which, by using the homogeneity of $u$ and Lemma~\ref{lm:homogeneity}, finishes the proof of the second case, and so (b) holds.
	\end{proof}
	
	At last, we can prove the main result of this part.
	\begin{proof}[Proof of Proposition~\ref{prop:limitlocalbehavior}]
		Let $u_{0}$ and $u_{\infty}$ be, respectively, a blow-up and a shrink-down limit of $u$.
		According to Lemma~\ref{lm:limitinglevels} both $u_{0}$ and $u_{\infty}$ are homogeneous of degree $-\gamma_p$.
		In what follows, we divide the rest of the proof into two cases:
		
		\noindent{\bf Case 1:} $p\in(1,2_{\#}]$. 
		
		\noindent Here, it follows from Lemma~\ref{lm:blowupclassification} (a) that both $u_{0}$ and $u_{\infty}$ are trivial, which, by Lemma~\ref{lm:limitlimitinglevels}, provides $\mathcal{P}_{\rm sph}(0,u)=\mathcal{P}_{\rm sph}(+\infty, u)=0$. 
		In addition, using the monotonicity property of the Pohozaev functional, we find $\mathcal{P}_{\rm sph}(r, u)=0$ for all $r>0$. Hence, by Lemma \ref{lm:homogeneity}, $u$ is homogeneous of degree $-\gamma_p$.
		Therefore, the proof of (a) of Proposition~\ref{prop:limitlocalbehavior} is now an immediate consequence of Lemma~\ref{lm:blowupclassification} (a).
		
		\noindent{\bf Case 2:} $p\in(2_{\#},2^{\#}-1)$.
		
		\noindent Initially, by Lemmas~\ref{lm:limitlimitinglevels} and \ref{lm:blowupclassification} (b), any blow-up $u_{0}$ is either trivial or has the form \eqref{subcriticalblowupsolutions}.
		If $u_{0}$ is trivial, then clearly $\mathcal{P}_{\rm sph}(r, u_{0})=0$ for all $r>0$, which combined with Lemma~\ref{lm:limitlimitinglevels} implies that $\mathcal{P}_{\rm sph}(0, u)=0$. 
		Otherwise, a simple computation shows  $\mathcal{P}_{\rm sph}(r, u_{0})=-\omega_{n-1}\ell^*_p$ for all $r>0$.
		Therefore, using again Lemma~\ref{lm:limitlimitinglevels}, we have $\mathcal{P}_{\rm sph}(0, u)=-\ell^*_p$. 
		Since the converse trivially follows, we obtain that $\mathcal{P}_{\rm sph}(0, u) \in$ $\{-\omega_{n-1}\ell^*_p,0\}$.
		Moreover, $\mathcal{P}_{\rm sph}(0, u)=0$, if and only if, all the blow-ups are trivial, whereas $\mathcal{P}_{\rm sph}(0, u)=-\omega_{n-1}\ell^*_p$, if and only if, all the blow-ups are of the form \eqref{subcriticalblowupsolutions}.
		In the case of shrink-down $u_{\infty}$ solution, the strategy is similar, so we omit it.
		These conclusions finish the proof of Case 2, and therefore the proposition holds.
	\end{proof}
	
	\section{Local asymptotic behavior}\label{sec:localbehavior}
	In this section, we present the proof of Theorem~\ref{maintheorem}. 
	First, we show an asymptotic symmetry result, which permits us to migrate to an ODE setup. 
	Second, we prove some universal upper bound estimates, not depending on the superharmonic assumption.
	However, we should emphasize that in the rest of the argument, there is a significant change of behavior of radial solutions \eqref{ourPDEcyl} for distinct values of the power $p\in(1,2^{\#}-1]$. 
	This difference occurs due to the change of sign of the coefficients in the tri-Laplacian written in cylindrical coordinates.
	These signs control the Lyapunov stability of the solutions to linearized operator around a limit blow-up solution, and so the asymptotic behavior of the local solutions near the isolated singularity.
	
	We divide our argument into three subsections, where we prove, respectively, the local behavior near the isolated singularity for the situations: $p\in(1,2^{\#}-1)$ in Subsection~\ref{subsec:serrin-lions}, $p=2_{\#}$ in Subsection~\ref{subsec:gidas-spruck}, and $p\in(2_{\#},2^{\#}-1)$ in Subsection~\ref{subsec:aviles}. 
	
	\subsection{Serrin--Lions case}\label{subsec:serrin-lions}
	We prove Theorem~\ref{maintheorem} (a). 
	The asymptotic analysis for this case is straightforward.
	We are based in the approach given by \cite{MR4085120}.
	
	In the sequel, we aim to prove the following proposition
	\begin{proposition}\label{prop:serrinlionscase}
		Let $u\in C^{6}(B_R^*)$ be a positive singular solution to \eqref{ourPDE} with $p\in(1,2_{\#})$. 
		Assume that $-\Delta u\geqslant0$ and $\Delta^2u\geqslant0$. 
		Then, there exist $C_1,C_2>0$ $($depending on $u$$)$ such that $C_1|x|^{6-n}\leqslant u(x)\leqslant C_2|x|^{6-n}$ for $0<|x|\ll1$, or equivalently, $u(x)\simeq |x|^{6-n}$ as $x\rightarrow 0$.
	\end{proposition}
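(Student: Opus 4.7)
The strategy is to first exploit asymptotic radial symmetry to reduce the question to the behavior of the spherical average $\overline{u}(r)$, and then identify the singular part of $u$ at the origin through a distributional extension of the PDE across zero. By Proposition~\ref{prop:asymptoticsymmetry} together with the Harnack-type estimate of Lemma~\ref{lm:gradientestimates}, it suffices to establish the two-sided bound $\overline{u}(r) \simeq r^{6-n}$ as $r \to 0^{+}$.

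The key observation is that $u \in L^{p}(B_{1})$ by Lemma~\ref{lm:integrability}, hence $f_{p}(u) = u^{p} \in L^{1}(B_{1})$, so the classical equation $(-\Delta)^{3} u = u^{p}$ valid pointwise on $B_{1}^{*}$ extends distributionally across the origin as
\begin{equation*}
(-\Delta)^{3} u = u^{p} + c_{0} \delta_{0} + c_{1} \Delta \delta_{0} + c_{2} \Delta^{2} \delta_{0} \quad {\rm in} \quad \mathcal{D}'(B_{1}).
\end{equation*}
Indeed, any radial distribution supported at the origin whose convolution with the fundamental solution $|x|^{6-n}$ remains locally $L^{1}$ is necessarily a linear combination of $\delta_{0},\Delta\delta_{0},\Delta^{2}\delta_{0}$, since higher-order Laplacian iterates of $\delta_{0}$ generate non-$L^{1}_{\rm loc}$ contributions. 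The superharmonicity assumptions, interpreted distributionally, kill the lower-order coefficients: if $c_{2} \neq 0$, then $-\Delta u$ would carry an atomic component proportional to $\delta_{0}$, incompatible with $-\Delta u \geqslant 0$ holding as a function; the analogous argument applied to $\Delta^{2} u$ then forces $c_{1} = 0$. Finally, non-removability of the singularity forces $c_{0} > 0$: if $c_{0} = 0$, then $(-\Delta)^{3} u = u^{p}$ holds classically on all of $B_{1}$, and a bootstrap using $u \in L^{p}$ and the subcriticality $p < 2_{\#}$ yields $u \in C^{\infty}(B_{1})$, contradicting the non-removable singularity.

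Inverting this distributional identity via Lemma~\ref{lm:integralrepresentation}(i) gives, up to a positive constant absorbed into $c_{0}$,
\begin{equation*}
u(x) = c_{0} |x|^{6-n} + \int_{B_{r_{0}}} |x-y|^{6-n} u(y)^{p} \,\ud y + \psi(x),
\end{equation*}
where $\psi$ is smooth and tri-harmonic. From $c_{0} > 0$ one reads off the lower bound $u(x) \gtrsim |x|^{6-n}$ near the origin. For the matching upper bound, one needs to show that the convolution term is of strictly lower order than $|x|^{6-n}$ as $x \to 0$. Splitting the domain of integration into $\{|y| \leqslant |x|/2\}$, $\{|x|/2 < |y| < 2|x|\}$, and $\{|y| \geqslant 2|x|\}$, and using the a priori pointwise bound $u(y) \leqslant C |y|^{-\gamma_{p}}$ from Corollary~\ref{cor:universalestimates} together with a rescaling / Riesz kernel computation, one arrives at the estimate $\mathcal{O}(|x|^{6-(n-6)p})$; since $(n-6)p < n$ in the Serrin range, the exponent $6 - (n-6)p > 6 - n$, so this contribution is strictly lower order. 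Combining with the previous lower bound and the boundedness of $\psi$ yields the claim $u(x) \simeq |x|^{6-n}$.

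The hardest part of the argument is the distributional analysis of the second paragraph: rigorously propagating the classical superharmonicity of $-\Delta u$ and $\Delta^{2} u$ across the origin to eliminate the coefficients $c_{1}$ and $c_{2}$, and upgrading the case $c_{0} = 0$ into a full smoothness contradiction via bootstrap — the latter becoming delicate as $p$ approaches the lower critical exponent $2_{\#}$, where the convolution estimate for the regular part degenerates.
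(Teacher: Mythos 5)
Your overall strategy (a B\^ocher-type decomposition: extend the equation distributionally across the origin, identify the atomic part $c_0\delta_0$, and read the two-sided bound off the Green representation) is a legitimate alternative to the paper's route, which instead proves the upper bound directly from the integral representation (Lemma~\ref{lm:upperestimateserrin}), establishes a removability criterion (Lemma~\ref{lm:removabilityserrin}), and then gets the lower bound by contradiction combined with the Harnack inequality and the maximum principle. As written, however, your argument has two genuine gaps.

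First, the elimination of $c_1$ and $c_2$ is incorrect. The hypothesis $-\Delta u\geqslant 0$ is a pointwise statement on the punctured ball and says nothing about the distributional Laplacian at the origin: by the Brezis--Lions lemma, a nonnegative $L^1_{\rm loc}$ function that is superharmonic away from the origin has distributional Laplacian equal to its pointwise Laplacian \emph{plus a nonnegative multiple of} $\delta_0$ (for instance $u=|x|^{2-n}$ satisfies $-\Delta u\geqslant 0$ away from $0$ yet $-\Delta u=c_n\delta_0$). So an atomic component in $-\Delta u$ is perfectly compatible with your hypotheses; superharmonicity yields only $c_2\geqslant 0$ (and similarly for $c_1$), not $c_2=0$. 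To kill these coefficients you must already know $u=o(|x|^{2-n})$ and $u=o(|x|^{4-n})$, i.e.\ you need the upper bound $u=\mathcal{O}(|x|^{6-n})$ \emph{before} the decomposition, which inverts the order of your argument. (Moreover, a distribution supported at the origin whose convolution with $|x|^{6-n}$ is locally integrable need not be radial, so ruling out the odd-order terms $\partial^{\alpha}\delta_0$ requires the order-zero cutoff estimate, not the heuristic you give.)

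Second, the convolution estimate for the upper bound does not close with the input you cite. Corollary~\ref{cor:universalestimates} gives $u(y)\leqslant C|y|^{-\gamma_p}$, and in the Serrin range $p\gamma_p=6p/(p-1)>n$, so $\int_{\{|y|<|x|/2\}}|y|^{-p\gamma_p}\,\ud y=+\infty$ and the Riesz-kernel computation diverges; your claimed output $\mathcal{O}(|x|^{6-(n-6)p})$ tacitly uses the bound $u\leqslant C|y|^{6-n}$, which is what you are trying to prove. This is precisely why the paper first upgrades the a priori bound to $u(x)\leqslant|x|^{-q}$ for some $q\in(n-6,\gamma_p)$ with $pq<n$, using $u\in L^p(B_1)$, the Harnack inequality of Lemma~\ref{lm:gradientestimates} and a blow-up argument, before running the convolution estimate. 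With that intermediate step inserted and the steps reordered so the upper bound comes first, your decomposition can be repaired, and the lower bound via $c_0>0$ then becomes a genuine alternative to the paper's Harnack-plus-maximum-principle argument.
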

	
	First, we prove an upper bound estimate based on a Green identity from Lemma~\ref{lm:integralrepresentation} (i).
	
	\begin{lemma}\label{lm:upperestimateserrin}
		Let $u\in C^{6}(B_R^*)$ be a positive solution to \eqref{ourPDE} with $p\in(1,2_{\#})$. 
		Assume that $-\Delta u\geqslant0$ and $\Delta^2u\geqslant0$. 
		Then, there exists $C_2>0$, depending only on $u$, such that
		$u(x)\leqslant C_2|x|^{4-n}$ as $x\rightarrow 0$.
	\end{lemma}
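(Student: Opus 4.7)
The plan is to exploit the chain of positive superharmonic functions that the assumptions $-\Delta u \geqslant 0$ and $\Delta^2 u \geqslant 0$ produce. Setting $w := -\Delta u$ and $v := \Delta^2 u$, the sixth order equation decouples into three second order Poisson equations
$$-\Delta u = w, \quad -\Delta w = v, \quad -\Delta v = u^p,$$
each of them with a nonnegative source and a nonnegative, classically superharmonic solution on the punctured ball. This turns the problem into an iterated second order analysis where Brezis--Lions type tools are available at every level.

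First, Lemma~\ref{lm:integrability} gives $u \in L^p(B_1)$, hence $u^p \in L^1(B_1)$. Combined with the positivity and superharmonicity of $v$, a standard cutoff/test-function argument yields $v \in L^1(B_{r_0})$ for $r_0>0$ small. Then I would apply a Brezis--Lions type distributional extension to $w$: since $w \geqslant 0$ is superharmonic on $B_{r_0}^*$ with $L^1$ Laplacian $v$, there exist $c_w \geqslant 0$ and a smooth function $h_w$ on $B_{r_0/2}$ such that $w$ satisfies $-\Delta w = v + c_w \delta_0$ distributionally on $B_{r_0}$. The associated second order Green representation, together with the standard splitting of the Riesz convolution into the three regions $\{|y|<|x|/2\}$, $\{|x-y|<|x|/2\}$, and the remainder, produces the pointwise bound $w(x) \leqslant C|x|^{2-n}$ as $x \to 0$.

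Second, I apply the same Brezis--Lions representation to $u$ itself, now using the improved source bound $w \leqslant C|y|^{2-n}$. The nontrivial estimate then reduces to the Riesz convolution
$$\int_{B_{r_0}} |x-y|^{2-n}\,|y|^{2-n}\,\ud y \leqslant C|x|^{4-n},$$
which follows by the same three-region splitting and requires only $n > 4$, covered by $n \geqslant 7$. Absorbing the bounded harmonic correction and the Dirac contribution $c_u|x|^{2-n}\leqslant C|x|^{4-n}$ for $|x|<1$ yields the claimed bound $u(x) \leqslant C_2 |x|^{4-n}$ near the origin.

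The main obstacle is justifying the Brezis--Lions extension at each level of the chain: one must verify that the behavior of $v$ (and then of $w$) at the isolated singularity is captured solely by a nonnegative multiple of $\delta_0$, with no wilder distributional part hiding at the origin. This is classical for a positive second order superharmonic function whose Laplacian lies in $L^1$, but the iterated use along the chain, and the need to keep track of constants depending only on the solution at the outer boundary, require careful bookkeeping. Once the Brezis--Lions step is secured, the rest of the argument is a standard Riesz-convolution computation.
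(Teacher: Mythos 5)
Your iterated second order scheme is genuinely different from the paper's proof, which works directly with the sixth order Green representation of Lemma~\ref{lm:integralrepresentation}(i) and whose real engine is an \emph{a priori decay upgrade}: from $u\in L^{p}(B_1)$ (Lemma~\ref{lm:integrability}) and the Harnack inequality of Lemma~\ref{lm:gradientestimates} one first gets $u=\mathrm{o}(|x|^{-\gamma_p})$, hence $u\leqslant|x|^{-q}$ for any $q\in(n-6,\gamma_p)$, and then a single Riesz convolution with source $|y|^{-pq}$, $pq<n$, closes the argument (and in fact yields the stronger bound $C|x|^{6-n}$, which is what Proposition~\ref{prop:serrinlionscase} actually needs). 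As written, your proposal has two genuine gaps. The more serious one is the treatment of the Dirac term: you assert $c_u|x|^{2-n}\leqslant C|x|^{4-n}$ for $|x|<1$, but near the origin $|x|^{2-n}=|x|^{-2}\,|x|^{4-n}\gg|x|^{4-n}$, so the inequality points the wrong way. If $c_u>0$ then $u(x)\gtrsim|x|^{2-n}$ and the claimed bound fails; you must \emph{prove} $c_u=0$. This is not automatic from Brezis--Lions: integrability $u\in L^{p}$ excludes a $|x|^{2-n}$ singularity only when $p\geqslant n/(n-2)$, which does not cover all of $(1,2_{\#})$. The paper sidesteps this by importing, through Lemma~\ref{lm:integralrepresentation}(i), a representation with \emph{no} singular Dirac-type terms and a triharmonic remainder that is smooth across the origin; your chain needs an analogous input at each level, and you have not supplied it.

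The second gap is the pointwise control of the Riesz potentials. With only $v\in L^{1}$, the near-diagonal region $\{|x-y|<|x|/2\}$ of $\int|x-y|^{2-n}v(y)\,\ud y$ is not bounded by $C|x|^{2-n}$; one needs a pointwise (or annular sup) bound on the source there. The only bound available before any improvement is $v=\Delta^{2}u\leqslant C|x|^{-\gamma_p-4}$ from Corollary~\ref{cor:universalestimates}, which contributes $C|x|^{-\gamma_p-2}$ to that region --- worse than $|x|^{2-n}$ precisely when $\gamma_p>n-4$, i.e.\ when $p$ is close to $1$ (where $\gamma_p\to\infty$). The same obstruction reappears at the next level for $N*(N*v)$. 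The repair is exactly the paper's first step: establish $u\leqslant|x|^{-q}$ with $q$ close to $n-6$ \emph{before} running any convolution, after which the annular source bounds are strong enough; but once you do that, a single sixth order convolution already finishes the proof, and the three-step chain buys nothing extra. Finally, note that even a repaired version of your argument tops out at $|x|^{4-n}$ (it tolerates $c_w>0$), whereas the bound actually used downstream is $|x|^{6-n}$.
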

	
	\begin{proof}
		Initially, by Lemma~\ref{lm:integrability}, we have that $u\in L^{p}(B_{1})$. 
		Moreover, since $p\in(1,2_{\#})$ and $u\in C^{6}(B_R^*)$ satisfies the Harnack inequality in Lemma~\ref{lm:gradientestimates}, it follows
		\begin{equation*}
			u(x)=\mathrm o\left(|x|^{-\gamma_p}\right) \quad {\rm as} \quad x\rightarrow 0,
		\end{equation*}
		which by $n-6<\gamma_p$, implies that for any $n-6<q<\gamma_p$, there exists $0<r_{q}<1$ depending only on $n$ $p$, and $q$ such that
		\begin{equation*}
			u(x)<|x|^{-q} \quad \mbox{in} \quad B^*_{r_{q}},
		\end{equation*}
		where in the last claim we have used a blow-up argument.
		Now taking $r_{q}>0$ as before, and using Lemma~\ref{lm:integrability} again, we get that $(-\Delta)^3 u=f_p(u) \in L^{1}(B_{1})$. 
		Thus, using \eqref{integralsystem}, we decompose
		\begin{equation}\label{lions2}
			u(x)=|x|^{6-n} -\int_{B_{r_q}}|x-y|^{6-n} (-\Delta)^3 u(y) \ud y+\psi(x) \quad \mbox{in} \quad B^*_{r_{q}},
		\end{equation}
		where $\psi\in C^{\infty}(B_{1})$ is such that $(-\Delta)^3\psi=0$ in $B_{r_{q}}$. 
		Nevertheless, using \eqref{integralsystem} that there exists $C_q>0$, depending only on $n$, $p$, and $q$ such that
		\begin{equation*}
			\left|\int_{B_{r_{q}}}|x-y|^{6-n}(-\Delta)^3 u(y) \ud y\right| \leqslant \int_{B_{r_{q}}}|x-y|^{6-n}|y|^{-pq} \ud y \leqslant C_{q}|x|^{6-n}.
		\end{equation*}
		Hence, fixing $n-6<q<\gamma_p$ and choosing suitable $r_{q}>0$ and $C_{q}>0$ on the last inequality, the proof follows directly from \eqref{lions2}.
	\end{proof}
	
	Second, we give a sufficient condition to classify whether the origin is a removable singularity or non-removable singularity.
	
	\begin{lemma}\label{lm:removabilityserrin}
		Let $u\in C^{6}(B_R^*)$ be a positive solution to \eqref{ourPDE} with $p\in(1,2_{\#})$. 
		Assume that $-\Delta u\geqslant0$ and $\Delta^2u\geqslant0$.
		If
		\begin{equation}\label{serrinestimate}
			u(x)=\mathrm o\left(|x|^{6-n}\right) \quad {\rm as} \quad x\rightarrow 0,
		\end{equation}
		then, the origin is a removable singularity.
	\end{lemma}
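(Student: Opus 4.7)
The plan is to combine the Green-type integral representation from Lemma~\ref{lm:integralrepresentation}(i) with an iterative Riesz-potential bootstrap. Using that representation,
\begin{equation*}
u(x)=\int_{B_{r_0}}|x-y|^{6-n}u(y)^p\,\ud y+\psi(x),
\end{equation*}
with $\psi$ polyharmonic and hence smooth (in particular, bounded) on $B_{r_0}$, the task reduces to bounding the convolution term on a punctured neighborhood of the origin; a classical removable-singularity theorem for $(-\Delta)^3$ will then upgrade boundedness of $u$ to $C^6$ smoothness.

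The basic analytic input I would use is the standard decay estimate for the Riesz-type potential, valid for $\beta<n$:
\begin{equation*}
\int_{B_{r_0}}|x-y|^{6-n}|y|^{-\beta}\,\ud y\leqslant C\begin{cases}1, & \beta<6,\\ 1+|\ln|x||, & \beta=6,\\ |x|^{6-\beta}, & 6<\beta<n,\end{cases}
\end{equation*}
proved by decomposing the domain into the three regions $\{|y|<|x|/2\}$, $\{|x|/2<|y|<2|x|\}$, and $\{2|x|<|y|<r_0\}$. By hypothesis, for any prescribed $\delta>0$ one has $u(y)\leqslant \delta|y|^{6-n}$ on a small punctured ball. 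Starting from the exponent $\gamma_0=n-6$ and plugging $u(y)^p\leqslant\delta^p|y|^{-p\gamma_0}$ into the representation produces, via the estimate above, a new pointwise bound $u(x)\leqslant C|x|^{-\gamma_1}$ with $\gamma_1=p\gamma_0-6$ (or immediate boundedness, if $p\gamma_0\leqslant 6$). Iterating yields the sequence $\gamma_{k+1}=p\gamma_k-6$, which satisfies $\gamma_k-\gamma_p=p^k(\gamma_0-\gamma_p)$, where $\gamma_p=6/(p-1)$ is the unstable fixed point of the map $T(\gamma)=p\gamma-6$.

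The crucial algebraic fact, and the reason the subcriticality hypothesis $p<2_{\#}$ enters, is that $p<2_{\#}$ is exactly equivalent to $\gamma_p>n-6=\gamma_0$. Therefore $\gamma_0-\gamma_p<0$, the expansion factor $p>1$ forces $\gamma_k\to-\infty$, and for some finite $k_0$ one reaches $p\gamma_{k_0}\leqslant 6$, at which step the Riesz integral becomes bounded and so does $u$. Once $u\in L^\infty_{\rm loc}$ near the origin, $u^p\in L^\infty_{\rm loc}$ as well, and either a direct bootstrap in the integral representation (whose kernel is now in $C^{5,\alpha}_{\rm loc}$) or the standard removable-singularity theorem for the polyharmonic operator extends $u$ to a $C^6$ solution on all of $B_{r_0}$. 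The main delicate point is essentially bookkeeping in the bootstrap: every iteration slightly shrinks the admissible punctured ball and multiplies the local $L^\infty$ constant by a factor depending on $p$ and the current exponent, while the borderline case $p\gamma_k=6$ produces a logarithmic factor that is harmlessly absorbed by any $|x|^{-\varepsilon}$ with $\varepsilon>0$; a careful choice of the shrinking radii and of $\varepsilon$ keeps all constants finite throughout the finitely many iterations required to reach a non-positive exponent.
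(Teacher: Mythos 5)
Your argument is correct, but it follows a genuinely different route from the paper's. The paper's proof is a Lebesgue-space bootstrap: from \eqref{serrinestimate} it deduces $u\in L^{q}(B_1)$ for every $q<2_{\#}$, hence $(-\Delta)^3u=f_p(u)\in L^{q/p}$ with $q/p>1$ precisely because $p<2_{\#}$, and then iterates standard $W^{6,s}$ elliptic estimates (using Lemma~\ref{lm:integrability} to know $u$ solves the equation distributionally across the origin) until Morrey's embedding gives $u\in C^{4,\alpha}(B_1)$ and Schauder theory upgrades to $C^{6}$. You instead run a pointwise Riesz-potential iteration on the Green representation \eqref{integralsystem}, lowering the decay exponent via $\gamma_{k+1}=p\gamma_k-6$; the same subcriticality condition enters in the equivalent form $n-6<\gamma_p$, i.e.\ the starting exponent sits below the unstable fixed point of $T(\gamma)=p\gamma-6$, so finitely many steps yield boundedness. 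The two mechanisms are the pointwise and integral incarnations of the same dichotomy, and both are legitimate. Your version buys explicit pointwise control at every stage and avoids invoking $L^p$ elliptic theory near the singular point, at the cost of needing the representation formula of Lemma~\ref{lm:integralrepresentation}(i) (whose superharmonicity hypotheses you do have) and the bookkeeping you describe; the paper's version is shorter but leans on the distributional extension across the origin. Two minor remarks: the smallness $\delta$ in \eqref{serrinestimate} is not actually what makes your iteration terminate — subcriticality alone does, since $p^k(\gamma_0-\gamma_p)\to-\infty$ regardless of the size of the initial constant — and one should note that $p\gamma_k\leqslant p\gamma_0=p(n-6)<n$ throughout, so the Riesz integrals you use are always finite; you implicitly use this but it is worth stating.
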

	
	\begin{proof}
		By \eqref{serrinestimate}, we get $u \in L^{q}(B_{1})$ for any $q\in[1,2_{\#})$.
		Moreover, since $p\in(1,2_{\#})$ and $|(-\Delta)^3u| \leqslant|u|^{p}$, it follows $(-\Delta)^3u \in L^{q/p}(B_{1})$ for any $q\in[1,2_{\#})$. 
		Whence, we can use standard elliptic theory 
		combined with a bootstrap argument to find $u\in W^{5,Q}(B_{1})$ for any $Q\in(1,+\infty)$. 
		In particular, it holds from Morrey's embedding that $u \in C^{4, \alpha}(B_{1})$ for any $\alpha\in(0,1)$. 
		Therefore, $u\in C^{6}(B_R)$, that is, it must have a removable singularity at the origin.
	\end{proof}
	
	Now we are in a position to prove our main result of this part.
	
	\begin{proof}[Proof of Proposition~\ref{prop:serrinlionscase}]
		Suppose by contradiction that $u\in C^{6}(B_R^*)$ has a non-removable singularity at the origin, that is, $u\in C^{6}(B_R)$.
		Then, using Lemma~\ref{lm:removabilityserrin}, we get that $u$ does not satisfy \eqref{serrinestimate}, that is, there exists $\rho>0$ and $\{r_{k}\}_{k\in\mathbb{N}}$ such that $r_{k}\rightarrow 0$ as $k\rightarrow+\infty$ satisfying 
		\begin{equation*}
			\sup_{\partial B_{r_{k}}}u \geqslant \rho r_{k}^{6-n}.
		\end{equation*}
		On the other hand, by the Harnack inequality in Lemma~\ref{lm:gradientestimates}, there exists $c_1>0$ satisfying $\inf_{\partial B_{r_{k}}} u\geqslant c_{1} \rho r_{k}^{6-n}$, where $c_{1}>0$ depends only on $n$, and $p$. 
		Taking $0<\rho\ll1$ smaller to ensure that there exists $c_2\rho \leqslant \inf_{\partial B_{1/2}}u$, it follows from the maximum principle that
		\begin{equation*}
			u(x) \geqslant c_{2} \rho|x|^{6-n} \quad \mbox{in} \quad B^*_{1/2},
		\end{equation*}
		which proves the asymptotic lower bound estimate in this case, and together with Lemma~\ref{lm:upperestimateserrin}, the proof of the proposition is concluded. 
	\end{proof}
	
	\subsection{Gidas--Spruck case}\label{subsec:gidas-spruck}
	The objective of this subsection is to prove Theorem~\ref{maintheorem} (c).
	Our strategy is based on the monotonicity formula for the Pohozaev functional in cylindrical coordinates (see Proposition~\ref{lm:monotonicityformula}), which relies on the strategy given in \cite{MR4123335,arXiv:2210.04619,10.1093/imrn/rnab212}.
	More precisely, we show that the local models near the origin are the limit blow-up solutions, whose limits are provided by its image under the action of the spherical Pohozaev functional.
	Finally, to prove the removability of the singularity theorem, we use a technique relying on the regularity lifting method from \cite{MR1338474}.
	
	We will prove the result below
	\begin{proposition}\label{prop:gidasspruckcase}
		Let $u\in C^{6}(B_R^*)$ be a positive singular solution to \eqref{ourPDE} with $p\in(2_{\#},2^{\#}-1)$. 
		Then, 
		\begin{equation*}
			u(x)=(1+\mathrm{o}(1))K_0(n,p)^{\frac{1}{p-1}}|x|^{-\gamma_p}.
		\end{equation*}
	\end{proposition}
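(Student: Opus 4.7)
The plan is to pass to the cylinder via the autonomous Emden--Fowler transform $v=\mathfrak{F}(u)$ from \eqref{cyltransform} and then to perform a shrink-down analysis on the rescaling family $\widehat{u}_\lambda(x)=\lambda^{\gamma_p}u(\lambda x)$ defined in \eqref{scalingfamily}. The asymptotic radial symmetry of Proposition~\ref{prop:asymptoticsymmetry}, combined with the uniform bounds of Lemma~\ref{lm:uniformestimategidasspruck}, gives that $v$ together with all its derivatives up to order five is uniformly bounded on $\mathcal{C}_{T_0}$ for some $T_0\ll -1$, with $|v(t,\theta)-\overline{v}(t)|=\mathcal{O}(e^{\beta t})$ as $t\to-\infty$; these estimates control the angular error terms in the subsequent argument and allow us to compare $u$ with its spherical average up to an $\mathcal{O}(|x|)$ correction.

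The core step is the blow-down compactness. Using the universal estimate of Lemma~\ref{lm:universalestimates} together with standard interior Schauder regularity applied to \eqref{ourPDE} (which each $\widehat{u}_\lambda$ satisfies on its rescaled domain), the family $\{\widehat{u}_\lambda\}_{\lambda>0}$ is precompact in $C^{6,\alpha}_{\mathrm{loc}}(\mathbb{R}^n\setminus\{0\})$ for some $\alpha\in(0,1)$. For any sequence $\lambda_k\to 0$, a subsequential limit $u_0$ solves \eqref{ourlimitPDE} in the punctured space. Using the scaling invariance \eqref{scaleinvariance} together with the monotonicity of $\mathcal{P}_{\rm sph}$ furnished by Proposition~\ref{lm:monotonicityformula} and Remark~\ref{rmk:limitrelation}, one computes $\mathcal{P}_{\rm sph}(r,u_0)=\lim_{k\to\infty}\mathcal{P}_{\rm sph}(r\lambda_k,u)=\mathcal{P}_{\rm sph}(0,u)$ for every $r>0$. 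Hence $r\mapsto\mathcal{P}_{\rm sph}(r,u_0)$ is constant, and Lemma~\ref{lm:homogeneity} forces $u_0$ to be homogeneous of degree $-\gamma_p$. The classification of Lemma~\ref{lm:blowupclassification}(b) then leaves only the two alternatives $u_0\equiv 0$ or $u_0\equiv K_0(n,p)^{1/(p-1)}|\cdot|^{-\gamma_p}$.

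The decisive task is to exclude the trivial blow-up; this is the main obstacle. If $u_0\equiv 0$ along some sequence, then $|x|^{\gamma_p}u(x)=\mathrm{o}(1)$ as $x\to 0$, which combined with the Harnack-type inequality of Lemma~\ref{lm:gradientestimates} (upgrading sup-decay on spheres to pointwise decay) and with the distributional formulation supplied by Lemma~\ref{lm:integrability} allows an $L^q_{\mathrm{loc}}$ bootstrap for the tri-Laplacian in the spirit of the regularity lifting method of \cite{MR1338474}. This upgrades $u$ to a classical $C^6$ solution across the origin, contradicting the non-removability hypothesis. Therefore $\mathcal{P}_{\rm sph}(0,u)=-\omega_{n-1}\ell_p^{*}$ in accordance with Proposition~\ref{prop:limitlocalbehavior}, every subsequential limit coincides with $K_0(n,p)^{1/(p-1)}|\cdot|^{-\gamma_p}$, and the full convergence $\widehat{u}_\lambda\to K_0(n,p)^{1/(p-1)}|\cdot|^{-\gamma_p}$ holds in $C^{6,\alpha}_{\mathrm{loc}}(\mathbb{R}^n\setminus\{0\})$. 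Evaluating this convergence at $\lambda=|x|$ and at a fixed unit direction, and reinserting the $\mathcal{O}(|x|)$ angular correction from the first paragraph, yields the claimed asymptotic identity.
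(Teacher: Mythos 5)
Your proposal is correct and follows essentially the same route as the paper: the paper's own proof of this proposition is a two-line reduction to Lemma~\ref{lm:limitinglevesgidasspruck} (blow-up compactness via Lemma~\ref{lm:universalestimates}, constancy of the Pohozaev functional along rescalings, and the resulting dichotomy between the trivial and the homogeneous limit) and Lemma~\ref{lm:removablesingularitygidasspruck} (regularity lifting to exclude the trivial alternative for a non-removable singularity), which is exactly the chain you reconstruct. The only point worth tightening is that your inference ``$u_0\equiv 0$ along \emph{some} sequence implies $|x|^{\gamma_p}u(x)=\mathrm{o}(1)$'' rests on the fact that $\mathcal{P}_{\rm sph}(0,u)$ determines every subsequential limit uniquely, which you do invoke later and which is how the paper argues as well.
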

	
	Now we use this rescaled family $\{\widehat{u}_{\lambda}\}_{\lambda>0}\subset C^{6,\alpha}(B^*_1)$, for some $\alpha\in(0,1)$, to obtain the blow-up limit for \eqref{ourPDE}. This allows us to study the limiting values for the Pohozaev functional, by using the classification results from Theorem~\ref{thm:luo-wei}.
	
	\begin{lemma}\label{lm:limitinglevesgidasspruck}
		Let $u\in C^{6}(B_R^*)$ be a positive singular solution to \eqref{ourPDE} with $p\in(2_{\#},2^{\#}-1)$ and $v={\mathfrak{F}}(u)$ be its autonomous Emden--Fowler transformation given by \eqref{newcylindrical}. 
		Then, ${\mathcal{P}}_{\rm cyl}(-\infty,v) \in\{-\ell^*_p,0\}$, where $\ell^*_p$ is given by \eqref{limintingconstant}.
		Moreover, it follows
		\begin{itemize}
			\item[{\rm (i)}] ${\mathcal{P}}_{\rm cyl}(-\infty,v)=0$, if and only if,
			\begin{equation}\label{asymptoticgidasspruck}
				u(x)=\mathrm o\left(|x|^{-\gamma_p}\right) \quad {\rm as} \quad x \rightarrow 0.
			\end{equation}
			\item[{\rm (ii)}] ${\mathcal{P}}_{\rm cyl}(-\infty,v)=-\ell^*_p$, if and only if,
			\begin{equation*}
				u(x)=(1+\mathrm{o}(1))K_0(n,p)^{\frac{1}{p-1}}|x|^{-\gamma_p} \quad {\rm as} \quad x \rightarrow 0.
			\end{equation*}
		\end{itemize}
	\end{lemma}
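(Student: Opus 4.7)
The plan is to extract blow-up limits along sequences $t_k\to-\infty$ of the translated cylindrical profiles $v_k(t,\theta):=v(t+t_k,\theta)$ and classify them via the monotonicity of the cylindrical Pohozaev functional combined with Theorem~\ref{thm:luo-wei}. The a priori bounds from Lemma~\ref{lm:uniformestimategidasspruck} together with standard elliptic regularity yield uniform $C^{6,\alpha}_{\rm loc}$-bounds on the $v_k$, so up to subsequences $v_k\to v_\infty$ in $C^{6}_{\rm loc}(\mathbb{R}\times\mathbb{S}^{n-1})$, with $v_\infty$ a positive solution of the autonomous cylindrical equation \eqref{ourPDEcyl} posed on the whole cylinder (equivalently, the Emden--Fowler transform of a solution to \eqref{ourlimitPDE}). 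Since $\mathcal{P}_{\rm cyl}(t,v_k)=\mathcal{P}_{\rm cyl}(t+t_k,v)\to\mathcal{P}_{\rm cyl}(-\infty,v)$ at each fixed $t$, the invariant $t\mapsto\mathcal{P}_{\rm cyl}(t,v_\infty)$ is constant. Plugging this into the monotonicity formula \eqref{eq:monotonicityformula} and using the strict sign relations in Remark~\ref{rmk:signoncoefficients} forces every integrand on the right-hand side to vanish identically, hence $\partial_t v_\infty\equiv 0$ (together with the higher radial and mixed derivatives that appear). Thus $v_\infty$ is $t$-independent and the associated $u_\infty$ is homogeneous of degree $-\gamma_p$, so by Theorem~\ref{thm:luo-wei}(b) we have the dichotomy $v_\infty\equiv 0$ or $v_\infty\equiv K_0(n,p)^{1/(p-1)}$. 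A direct evaluation of $\mathcal{H}$ on constants (all derivative terms drop out) then yields $\mathcal{P}_{\rm cyl}(-\infty,v)\in\{0,-\ell^*_p\}$.

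For the characterizations (i) and (ii), the converse implications are direct computations. If $u(x)=\mathrm{o}(|x|^{-\gamma_p})$, then $v(t,\theta)\to 0$ as $t\to-\infty$, and combining this with the derivative estimates obtained by applying Corollary~\ref{cor:universalestimates} to suitable rescalings forces all the derivative terms entering $\mathcal{H}$ to vanish in the limit, giving $\mathcal{P}_{\rm cyl}(-\infty,v)=0$; likewise, substituting $u(x)=(1+\mathrm{o}(1))K_0^{1/(p-1)}|x|^{-\gamma_p}$ into $\mathcal{H}$ produces the value $-\ell^*_p$. The forward implications are handled by the blow-up analysis above: if $\mathcal{P}_{\rm cyl}(-\infty,v)=0$, then every subsequential limit $v_\infty$ has constant Pohozaev equal to $0$, which by the dichotomy forces $v_\infty\equiv 0$; uniqueness of the possible blow-up limit promotes subsequential to full uniform convergence, that is, $v(t,\theta)\to 0$ as $t\to-\infty$, which translates back to $u(x)=\mathrm{o}(|x|^{-\gamma_p})$. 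The analogous argument for $\mathcal{P}_{\rm cyl}(-\infty,v)=-\ell^*_p$ gives that every blow-up is the nontrivial constant $K_0^{1/(p-1)}$; combined with the asymptotic radial symmetry of Proposition~\ref{prop:asymptoticsymmetry}, this upgrades subsequential convergence to $v(t,\theta)=K_0^{1/(p-1)}+\mathrm{o}(1)$, which rewrites as the claimed expansion for $u$.

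I expect the main technical obstacle to be upgrading the subsequential convergence of the cylindrical profiles as $t\to-\infty$ to full convergence in both (i) and (ii); this rests on the fact that the cylindrical Pohozaev invariant labels the blow-up uniquely along every sequence, so the entire family $\{v(\cdot+t_k,\cdot)\}$ must accumulate on the same limit. Secondary care is required in the rigidity step: to conclude $t$-independence of $v_\infty$, one must verify that the strict signs in Remark~\ref{rmk:signoncoefficients} prevent cancellation between the five quadratic terms in \eqref{eq:monotonicityformula}, so that each one separately vanishes and then the angular derivative terms in $\mathcal{H}_{\rm ang}$ can be controlled in the limit via the $C^{6}_{\rm loc}$ convergence.
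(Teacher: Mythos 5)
Your proposal is correct and follows essentially the same route as the paper: the translated cylindrical profiles $v(\cdot+t_k,\cdot)$ are exactly the Emden--Fowler transforms of the rescalings $\widehat{u}_{\lambda_k}$ with $\lambda_k=e^{t_k}$ used in the paper, and both arguments proceed via compactness, constancy of the Pohozaev functional on the blow-up limit, rigidity from the monotonicity formula together with the signs of $K_5,K_3,K_1$, classification of the resulting homogeneous limits, and the fact that the limiting Pohozaev value uniquely labels the blow-up so that subsequential convergence upgrades to full convergence.
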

	
	\begin{proof}
		Initially, by Lemma~\ref{lm:universalestimates}, for any $K\subset B_{1/2\lambda}$ compact subset, the family 
		$\{\widehat{u}_{\lambda}\}_{\lambda>0}\subset C^{6,\alpha}(B^*_1)$ is uniformly bounded, for some $\alpha\in(0,1)$.
		Then, by standard elliptic theory, there exists a positive  function $u_{0} \in C^{6,\alpha}(\mathbb{R}^{n}\setminus\{0\})$, such that, up to a subsequence, we have that $\|\widehat{u}-u_{0}\|_{C_{\rm loc}^{6,\alpha}(\mathbb{R}^{n} \setminus\{0\})}$ as $\lambda \rightarrow 0$, where $u_{0}$ satisfies the blow-up limit system \eqref{ourPDE}.
		Moreover, by \cite[Theorem~1.2]{arXiv:2210.04619}, we know that $u_0\in C^6(\mathbb R^n\setminus\{0\})$ is satisfies $-\Delta u_0\geqslant 0$ and $\Delta^2 u_0\geqslant 0$ in $\mathbb{R}^{n} \setminus\{0\}$, which, by the maximum principle, yields that either $u_0 \equiv 0$
		or $u_0>0$
		in $\mathbb{R}^{n} \backslash\{0\}$.
		Therefore, by Theorem~\ref{thm:luo-wei}, the blow-up limit $u_{0}$ is radially symmetric with respect to the origin.
		Furthermore, by the scaling invariance of the Pohozaev functional, we get
		\begin{equation}\label{limitinggs}
			\mathcal{P}_{\rm sph}(r,u_{0})=\lim_{\lambda\rightarrow 0} \mathcal{P}_{\rm sph}(r, \widehat{u}_{\lambda})=\lim_{\lambda \rightarrow0} \mathcal{P}_{\rm sph}(\lambda r,u_0)=\mathcal{P}_{\rm sph}(0, u_0).
		\end{equation}
		
		In addition, since $v_0=\mathfrak{F}(u_0)$ satisfies \eqref{ourPDEcyl}, by \eqref{limitinggs}, we get that $\mathcal{P}_{\rm cyl}(t,v_{0})=\mathcal{P}_{\rm sph}(r,u_{0})$ is a constant.
		Consequently, by the monotonicity formula in Proposition~\ref{lm:monotonicityformula}, we get 
		\begin{equation*}
			\frac{\ud}{\ud t} \mathcal{P}_{\rm cyl}(t,v_{0})=\left[-K_{5}(n,p){v_0^{(3)}}^{2}+K_{3}(n,p){v_0^{(2)}}^{2}-K_{1}(n,p){v_0^{(1)}}^{2}\right] \equiv 0.
		\end{equation*}
		Moreover, since $K_5(n,p),K_{1}(n,p)>0$ and $K_{3}(n,p)<0$, we find that $v^{(1)}_0\equiv 0$ in $\mathbb{R}$, and so $v_0$ is constant, which can be directly computed, namely either $v_0=0$ or $v_0=K_{0}(n,p)^{\frac{1}{p-1}}$.
		Moreover, by \eqref{limitinggs}, it follows $\mathcal{P}_{\rm cyl}(0, v_0) \in\{-\ell^*_p ,0\}$ and $\mathcal{P}_{\rm sph}(0, u_0)\in\{-\omega_{n-1}\ell^*_p ,0\}$.
		
		Finally, if $\mathcal{P}_{\rm sph}(0, u_0)=0$, then, by uniqueness of the limit $u_0\equiv 0$. 
		Whence, we conclude that $\|\widehat{u}_{\lambda}\|_{C^{6,\alpha}(K)} \rightarrow 0$ for any sequence of $\lambda\rightarrow0$, for some $\alpha\in(0,1)$, which straightforwardly provides \eqref{asymptoticgidasspruck}.
		Otherwise, we have 
		\begin{equation*}
			u_0\equiv K_{0}(n,p)^{\frac{1}{p-1}}|x|^{-\gamma_p},
		\end{equation*}
		which proves (ii) of this lemma and finishes the proof.
	\end{proof}
	
	Next, we use the last lemma to prove the removable singularity theorem. 
	Our proof is based on regularity lifting methods combined with the De Giorgi--Nash--Moser iteration technique.
	
	\begin{lemma}\label{lm:removablesingularitygidasspruck}
		Let $u\in C^{6}(B_R^*)$ be a positive singular solution to \eqref{ourPDE} with $p\in(2_{\#},2^{\#}-1)$. 
		If
		\begin{equation*}
			u(x)=\mathrm o\left(|x|^{-\gamma_p}\right) \quad {\rm as} \quad x \rightarrow 0,
		\end{equation*}
		then the origin is a removable singularity.
	\end{lemma}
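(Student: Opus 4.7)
The strategy is to use the smallness encoded in $u(x)=\mathrm{o}(|x|^{-\gamma_p})$ to bootstrap the integrability of $u$ near the origin up to $L^\infty$, after which standard Schauder theory will close off the singularity. I would follow the regularity lifting scheme of Chen--Li, adapted to the sixth order setting via the Green representation already established in Lemma~\ref{lm:integralrepresentation}.

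First I would extract quantitative smallness from the $\mathrm{o}$-estimate. Given any $\varepsilon>0$, there exists $r_\varepsilon>0$ such that $u(x)\leqslant \varepsilon|x|^{-\gamma_p}$ in $B_{r_\varepsilon}^*$. Integrating this bound gives $u\in L^{q}(B_{r_\varepsilon})$ for every $q<n/\gamma_p=n(p-1)/6$, with $\|u\|_{L^q(B_{r_\varepsilon})}\leqslant C\varepsilon\, r_\varepsilon^{n/q-\gamma_p}$, which can be made arbitrarily small by shrinking $r_\varepsilon$. This pins down the critical Hardy--Littlewood--Sobolev (HLS) exponent $q_*=n(p-1)/6$ as the natural threshold to attack.

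Next I would apply regularity lifting. By Lemma~\ref{lm:integralrepresentation}(i), on $B_{r_0}$ one has $u(x)=\int_{B_{r_0}}|x-y|^{6-n}f_p(u(y))\,\ud y+\psi(x)$, with $\psi$ smooth. Decomposing $f_p(u)=u^{p-1}\cdot u$, I would study the linear operator
\[
T_{r}v(x):=\int_{B_{r}}|x-y|^{6-n}u(y)^{p-1}v(y)\,\ud y.
\]
By HLS combined with Hölder, $T_r$ is bounded on $L^{q}(B_r)$ for $q>q_*$, with operator norm controlled by $\|u\|_{L^{q_*}(B_r)}^{p-1}$. The $L^{q_*}$-smallness obtained in the first step (after a slight truncation/limiting argument around $q_*$, or by splitting $u=u_A+u_B$ into its large and bounded parts as in Chen--Li) lets me choose $r$ so small that $T_r$ is a genuine contraction on $L^q(B_r)$ for every $q$ in an interval just above $q_*$. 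Writing $u$ as a fixed point of a perturbation of $T_r$ and inverting $I-T_r$, I would conclude $u\in L^{q}(B_r)$ for some $q>q_*$, and then iterate. Each iteration raises the exponent according to the HLS rule $1/q'=p/q-6/n$, and a finite number of steps pushes $u$ into $L^{Q}(B_{r'})$ for every $Q<\infty$.

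Finally, once $u\in L^{Q}_{\rm loc}$ near the origin for arbitrarily large $Q$, one has $f_p(u)\in L^{Q/p}_{\rm loc}$. Standard elliptic $L^s$-theory applied to $(-\Delta)^3 u=f_p(u)$ on $B_{r'}$ yields $u\in W^{6,s}(B_{r''})$ for all $s<\infty$, hence $u\in C^{5,\alpha}$ by Morrey, and $u\in C^{6,\alpha}$ by Schauder bootstrapping. In particular $u$ extends to a classical solution in a full neighborhood of $0$, so the singularity is removable. The main technical obstacle I anticipate is the very first bootstrap step across the HLS threshold $q_*=n(p-1)/6$, where one is exactly at the borderline: crucially, the $\mathrm{o}$-smallness (as opposed to mere $O$-boundedness of the critical size) is what turns the HLS-bounded operator into a contraction after choosing $r$ small, and this is the step where the hypothesis of the lemma is genuinely used.
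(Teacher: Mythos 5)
There is a genuine gap at exactly the point you flag as "the main technical obstacle," and the truncation/limiting argument you gesture at does not exist in the form you describe. The pointwise bound $u(x)=\mathrm{o}(|x|^{-\gamma_p})$ gives $u\in L^{q}(B_{r})$ only for $q<q_*:=n/\gamma_p=n(p-1)/6$, \emph{strictly} below the threshold: at $q=q_*$ one faces $\int_{B_r}\epsilon(x)^{q_*}|x|^{-n}\,\ud x$ with $\epsilon(x)\to 0$, which can diverge (take $\epsilon(x)=(\ln(1/|x|))^{-\delta}$ with $\delta$ small). But the HLS/H\"older scheme you set up needs precisely $V=u^{p-1}\in L^{n/6}$, i.e.\ $u\in L^{q_*}$, to make $T_r$ bounded on $L^q$ for $q>q_*$; with only $V\in L^{s}$, $s<n/6$, the exponent bookkeeping $1/q'=1/q+6/n-1/s$ \emph{loses} integrability and the bootstrap cannot start. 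Likewise the Chen--Li splitting $V=V_M+\widetilde V_M$ requires $V\in L^{n/6}$ so that $\|V_M\|_{L^{n/6}}\to 0$ as $M\to\infty$; it is not a substitute for the missing critical integrability.

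The paper bridges this gap with a separate argument (its Claim 1): testing the equation against $\xi_\varepsilon=\eta_\varepsilon\phi$ with $\phi(x)=|x|^{\zeta_p}$, using the identity $(-\Delta)^3\phi/\phi=A_p|x|^{-6}$ with $A_p>0$ and the fact that the $\mathrm{o}$-hypothesis gives $u^{p-1}(x)=\mathrm{o}(1)|x|^{-6}$, to conclude by integration by parts and dominated convergence that $\int_{B_{1/2}}u^{n/\gamma_p}\,\ud x<+\infty$. Only after this is established does the regularity lifting (Claim 2, essentially identical to your second and third steps) go through. So your overall architecture matches the paper's, but you are missing the key lemma that crosses the borderline, and it cannot be recovered by integrating the pointwise bound.
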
 
	
	\begin{proof}
		Without loss of generality, let us consider $R=1$.
		The proof will be divided into some claims.
		
		\noindent{\bf Claim 1:} If
		$u(x)=\mathrm o(|x|^{-\gamma_p})$ as $x \rightarrow 0
		$, then
		\begin{equation}\label{condition}
			\int_{B_{1/2}}u^{n\gamma_p^{-1}}\ud x<+\infty.	
		\end{equation}
		
		\noindent In fact, let us consider $\phi(|x|)=|x|^{\zeta_p}$, where $\zeta_p=-2\gamma_p(2^{\#}-1)(n-2_{\#})(p-1)^{-1}$.
		Then, a direct computation, provides
		\begin{equation*}
			\Delta^{3}\phi(x)=\zeta_p(\zeta_p-2)(\zeta_p-3)(\zeta_p+n-2)(\zeta_p+n-4)(\zeta_p+n-6)|x|^{\zeta_p-6} \quad {\rm in} \quad \mathbb R^n\setminus\{0\},
		\end{equation*}
		which, since $\zeta_p+n-6=\gamma_p>0$, it follows that
		\begin{equation*}
			A_p:=\zeta_p(\zeta_p-2)(\zeta_p-3)(\zeta_p+n-2)(\zeta_p+n-4)(\zeta_p+n-6)>0.
		\end{equation*}
		Thus, we can write
		\begin{equation}\label{gs4}
			\frac{(-\Delta)^{3} \phi}{\phi}=\frac{A_p}{|x|^{6}} \quad \mbox{in} \quad \mathbb{R}^{n}\setminus\{0\}.
		\end{equation}
		
		For any $0<\varepsilon \ll 1$, let us consider $\eta_{\varepsilon}\in C^{\infty}\left(\mathbb{R}^{n}\right)$ with $0\leqslant\eta_{\varepsilon}\leqslant1$ a cut-off function satisfying
		\begin{equation}\label{cutoff2}
			\eta_{\varepsilon}(x)=
			\begin{cases}
				0, & \mbox{for} \ \varepsilon\leqslant|x| \leqslant 1/2\\
				1, & \mbox{for} \ |x|\leqslant\varepsilon/2 \ \mbox{or} \ |x|\geqslant3/4,
			\end{cases}
		\end{equation}
		and $|D^{(j)} \eta_{\varepsilon}(x)| \leqslant C \varepsilon^{-j}$ for $j=0,1,2,3,4,5,6$.
		Defining $\xi_{\varepsilon}=\eta_{\varepsilon}\phi$, multiplying \eqref{ourPDE} by $\xi_{\varepsilon}$ and integrating by parts in $B_{1}$, we obtain
		\begin{equation}\label{gs1}
			\int_{B_{1}} \eta_{\varepsilon} u \phi\left(\frac{(-\Delta)^{3} \phi}{\phi}-f_p(u)\right)\ud x=-\int_{B_{1}} u \mathfrak{T}\left(\eta_{\varepsilon}, \phi\right)\ud x,
		\end{equation}
		where $\mathfrak{T}_\varepsilon:C^{\infty}_c(B_1)\rightarrow C^{\infty}_c(B_1)$ is defined by
		\begin{equation*}
			\mathfrak{T}_\varepsilon(\phi)=\mathfrak{T}\left(\eta_{\varepsilon}, \phi\right)=6 \nabla \eta_{\varepsilon}\nabla \Delta^2\phi-15 \Delta \eta_{\varepsilon} \Delta^2\phi+20 \nabla \Delta\eta_{\varepsilon}\nabla \Delta\phi-15 \Delta^2\eta_{\varepsilon}\Delta\phi+6 \nabla \Delta^2\eta_{\varepsilon}\nabla\phi-\phi\Delta^{3} \eta_{\varepsilon}.
		\end{equation*}
		Using Lemma~\ref{lm:universalestimates} combined with the estimates on the cut-off function \eqref{cutoff2} and its derivatives, there exist $c_1,c_2>0$, independent of $\varepsilon$, satisfying the following estimates,
		\begin{equation*}
			\left|\int_{B_{1}} u\mathfrak{T}_\varepsilon(\phi)\ud x\right| \leqslant  c_{1}+c_{2} \varepsilon^{n} \varepsilon^{\zeta_p-6} \varepsilon^{-\gamma_p}<+\infty,
		\end{equation*}
		which implies that the right-hand side of \eqref{gs1} is uniformly bounded.
		In addition, assumption \eqref{condition} yields that $u^{p-1}(x)=\mathrm{o}(1)|x|^{-6}$ as $x\rightarrow 0$, which together with \eqref{gs4} and \eqref{gs2} provides that there exists $C>0$ satisfying
		\begin{equation}\label{gs2}
			\int_{B_{1}}\eta_{\varepsilon} u(x)|x|^{\zeta_p-6}\ud x\leqslant C.
		\end{equation}
		Therefore, by Lemma~\ref{lm:universalestimates}, it holds
		\begin{align}\label{gs3}
			\int_{\{\varepsilon \leqslant|x| \leqslant {1}/{2}\}} u(x)^{n\gamma_p^{-1}}\ud x  \leqslant C\int_{\{\varepsilon \leqslant|x| \leqslant {1}/{2}\}} u(x)|x|^{\zeta_p-6}\ud x\leqslant C\int_{B_{1}} \eta_{\varepsilon} u(x)|x|^{\zeta_p-6}\ud x<+\infty,
		\end{align}
		where the last inequality comes from \eqref{gs2}.
		Finally, passing to the limit as $\varepsilon \rightarrow 0$ in \eqref{gs3}, the proof of Claim 1 follows by applying the dominated convergence theorem.
		
		\noindent{\bf Claim 2:} If \eqref{condition} holds, then $u \in L^{q}(B_1)$ for all $q>2^{\#}$.
		
		\noindent Indeed, by Lemma~\ref{lm:integralrepresentation} (i), there exist a Green function with homogeneous Dirichlet boundary conditions $G_3(x,y)$ and $\psi\in L^{\infty}_{\rm loc}(B_{1/2})$ with $\Delta^2\psi=0$ and $\Delta\psi=0$ such that
		\begin{equation*}
			u(x)=\int_{B_1}G_2(x,y)\Delta^3u\ud x+\psi(x) \quad \mbox{in} \quad B_{1/2}.
		\end{equation*}
		More precisely, $G_{3}(x, y)$ is a distributional solution to the Dirichlet problem
		\begin{equation*}
			\begin{cases}
				\Delta^{3} G_{2}(x,y)=\delta_x(y) & \mbox{in} \quad B_{1/2}\\
				G_{3}(x, y)=\partial_{\nu}G_{3}(x, y)=\Delta G_{3}(x, y)=0 & \mbox{on} \quad \partial B_{1/2},
			\end{cases}
		\end{equation*}
		and there exists positive constant $C_{n}>0$ such that
		\begin{equation*}
			0<G_{3}(x, y) \leqslant \widetilde G_{3}(|x-y|):=C_{n}|x-y|^{6-n} \quad \mbox{for} \quad  x,y\in B_{1/2} \quad \mbox{and} \quad |x-y|>0,
		\end{equation*}
		where $\widetilde G_{3}(x,y)=C_n|x-y|^{6-n}$ is the fundamental solution to $\Delta^3$ in $\mathbb{R}^n$.
		Recall that $u\in C^6(B_1^*)$ satisfies
		\begin{equation}\label{potentialsystem}
			(-\Delta)^3 u=V(x) u \quad \mbox{in} \quad B_1^*,
		\end{equation} 
		where $V(x)=u^{p-1}$. 
		Moreover, using \eqref{condition}, we find that $V\in L^{n/6}(B_{1/2})$.
		
		Let us consider $Z=C^{\infty}_c(B_{1/4})$, $X=L^{2^{\#}}(B_{1/4})$ and $Y=L^{p}(B_{1/4})$ for $q\in(2^{\#},+\infty)$ as in \cite[Theorem~3.3.1]{MR1338474}. 
		Hence, it is well-defined the following inverse operator
		\begin{equation*}
			(Tu)(x)=\int_{B_{1/4}}\widetilde G_{3}(x,y)u(y)\ud y.
		\end{equation*}
		We also consider the operator $T_M:=\widetilde G_{3}\ast V_M$, which applied in both sides of \eqref{potentialsystem}, provides $u=T_Mu+\widetilde{T}_Mu$, where 
		\begin{equation*}
			(T_M u)(x)=\int_{B_{1/4}}\widetilde G_{3}(x,y)V_{M}(y)u(y)\ud y \quad \text{and} \quad (\widetilde{T}_M u)(x)=\int_{B_{1/4}}\widetilde G_{3}(x,y)\widetilde{V}_M(y)u(y)\ud y.
		\end{equation*}
		Here, for $M>0$, we define  $\widetilde{V}_M(x)=V(x)-V_M(x)$, where
		\begin{equation*}
			V_M(x)=
			\begin{cases}
				V(x), \ {\rm if} \ |V(x)|\geqslant M,\\
				0,\ {\rm otherwise}.
			\end{cases}
		\end{equation*}
		
		Now we can run the regularity lifting method, which is divided into two steps.
		
		\noindent{\bf Step 1:} For $q\in(2_{\#},+\infty)$, there exists $M\gg1$ large such that $T_M:L^{q}(B_{1/4})\rightarrow L^{q}(B_{1/4})$ is a contraction.
		
		\noindent In fact, for any $q\in(2_{\#},+\infty)$, there exists $m\in (1,n/6)$ such that $q=nm/(n-6m)$. Then, by the Hardy--Littlewood--Sobolev and H\"{o}lder inequalities \cite{MR717827}, for any $u\in L^{q}(\mathbb{R}^n)$, we get
		\begin{equation*}
			\|T_Mu\|_{L^{q}(B_{1/4})}\leqslant\|\widetilde G_{3}\ast V_{M}u\|_{L^{q}(B_{1/4})}\leqslant C\|V_M\|_{L^{{n}/{6}}(B_{1/4})}\|u\|_{L^{q}(B_{1/4})}.
		\end{equation*}
		Since $V_{M}\in L^{n/6}(B_{1/4})$ it is possible to choose a large $M\gg1$ satisfying $\|V_M\|_{L^{{n}/{6}}(B_{1/4})}<{1}/{2C}$. 
		Therefore, we arrive at
		\begin{equation*}
			\|T_Mu\|_{L^{q}(B_{1/4})}\leqslant{1}/{2}\|u\|_{L^{q}(B_{1/4})},
		\end{equation*}
		which implies that $T_M$ is a contraction.
		
		\noindent{\bf Step 2:} For any $q\in(2_{\#},+\infty)$, it follows that $\widetilde{T}_Mu\in L^{q}(B_{1/4})$.
		
		\noindent Indeed, for any $q\in(2_{\#},+\infty)$, we pick $1<m<n/6$ satisfying $q=nm/(n-6m)$. 
		Since $\widetilde{V}_M$ is bounded, we get
		\begin{equation*}
			\|\widetilde{T}_M\|_{L^{q}(B_{1/4})}=\|\widetilde G_{3}\ast\widetilde{V}_Mu\|_{L^{q}(B_{1/4})}\leqslant C\|\widetilde{V}_Mu\|_{L^{m}(B_{1/4})}\leqslant C\|u\|_{L^{m}(B_{1/4})}.
		\end{equation*}
		However, using \eqref{condition}, we have that  $u\in L^{q}(B_{1/4})$ for $q\in(1,n\gamma_p^{-1})$.
		Besides, $q=(p-2)n\gamma_p^{-1}$ when $m=n\gamma_p^{-1}$. Thus, we obtain that $u\in L^{q}(B_{1/4})$ for
		\begin{equation*}
			\begin{cases}
				1<q<\infty,& {\rm if} \ p\geqslant2,\\
				1<q\leqslant(2-p)^{-1}n\gamma_p^{-1},& {\rm if} \ 1<p<2.
			\end{cases}
		\end{equation*}
		Now we can repeat the argument for $m=(p-2)n\gamma_p^{-1}$ to get that $u\in L^{q}(B_{1/4})$ for
		\begin{equation*}
			\begin{cases}
				1<q<\infty,& {\rm if} \ p\geqslant2,\\
				1<q\leqslant(2-p)^{-1}n\gamma_p^{-1},& {\rm if} \ 1<p<2.
			\end{cases}
		\end{equation*}
		Therefore, by proceeding inductively as in \cite[Lemma~3.8]{MR4123335}, the proof of the claim follows.
		Ultimately, combining Steps 1 and 2, we can apply \cite[Theorem~3.3.1]{MR1338474} to show that $u\in L^{p}(B_{1/4})$ for all $p>2^{\#}$. In particular, the proof of the claim is finished.
		
		Now, by the Morrey's embedding theorem, it follows that $u\in C^{0,\alpha}(B_{1/4})$, for some $\alpha\in(0,1)$. Finally using Schauder estimates, one gets that $u\in C^{6,\alpha}(B_{1/4})$. 
		In particular, the singularity at the origin is removable, which concludes the proof of the lemma.
	\end{proof}
	
	\begin{proof}[Proof of Proposition~\ref{prop:gidasspruckcase}]
		Suppose by contradiction that $u\in C^6(\mathbb R^n\setminus\{0\})$ has a non-removable singularity at the origin, then by Lemma~\ref{lm:removablesingularitygidasspruck}, $u$ does not satisfy \eqref{asymptoticgidasspruck}. Therefore, the proof follows as a consequence of Lemma~\ref{lm:limitinglevesgidasspruck}.
	\end{proof}
	
	\subsection{Aviles case}\label{subsec:aviles}
	Finally, we prove Theorem~\ref{maintheorem} (b).
	The asymptotic analysis for the lower critical exponent, $p=2_{\#}$ exhibits its subtlety. 
	First, since $\gamma(2_{\#})=n-6$, one would expect the singular solutions to \eqref{ourPDE} to have the same behavior as the fundamental solution to the tri-Laplacian near the origin; thus, the isolated singularity would be removable.
	
	Our objective is to prove the proposition below
	\begin{proposition}\label{prop:avilescase}
		Let $u\in C^{6}(B_R^*)$ be a positive singular solution to \eqref{ourPDE} with $p=2_{\#}$. 
		Assume that $-\Delta u\geqslant0$ and $\Delta^2u\geqslant0$.
		Then, 
		\begin{equation*}
			u(x)=(1+\mathrm{o}(1))\widehat{K}_{0}(n)^{\frac{n-6}{6}}|x|^{6-n}(\ln|x|)^{\frac{6-n}{6}},
		\end{equation*}
		where $\widehat{K}_{0}(n):=-\lim_{t\rightarrow-\infty}t\widetilde{K}_0(n,t)$ is given by \eqref{serrinasymptoticsconstant}.
	\end{proposition}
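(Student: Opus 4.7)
The plan is to work entirely in the nonautonomous Emden--Fowler coordinates. Set $w := \tilde{\mathfrak{F}}(u)$ as in \eqref{noncyltransform}, so that $w$ satisfies \eqref{ourPDEcylnon} on $\mathcal{C}_T$. In these variables, the claimed asymptotic reduces to
\begin{equation*}
\lim_{t\to -\infty} w(t,\theta)=\widehat{K}_{0}(n)^{\frac{n-6}{6}}\quad\text{uniformly in }\theta\in\mathbb{S}^{n-1}.
\end{equation*}
By the asymptotic radial symmetry established in Proposition~\ref{prop:asymptoticsymmetry}, together with its cylindrical consequence recorded in Remark~\ref{rmk:estimates} (namely $|w(t,\theta)-\overline{w}(t)|=\mathcal{O}(e^{\beta t})$ for some $\beta>0$), the angular part of $w$ decays exponentially as $t\to-\infty$. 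Hence it is enough to analyze the spherical average $\overline{w}(t)$.

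Second, I would apply Lemma~\ref{lm:signproperties}, which already furnishes the dichotomy
\begin{equation*}
\lim_{t\to -\infty} \overline{w}(t)\in\bigl\{0,\ \widehat{K}_{0}(n)^{\frac{n-6}{6}}\bigr\},
\end{equation*}
together with $\lim_{t\to-\infty}\overline{w}^{(j)}(t)=0$ for all $j\geqslant 1$. The sharp a priori bound from Lemma~\ref{lm:sharpestimate} guarantees that $\overline{w}$ is bounded near $-\infty$, so the limit exists along the whole trajectory. Thus the entire proposition reduces to ruling out the possibility $\overline{w}(t)\to 0$.

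Third, I would contradict $\overline{w}(t)\to 0$ by showing it forces the origin to be a removable singularity of $u$. Translating back through $\tilde{\mathfrak{F}}^{-1}$, the hypothesis $\overline{w}\to 0$ reads $u(x)=\mathrm{o}\bigl(|x|^{6-n}(\ln|x|)^{(6-n)/6}\bigr)$ as $x\to 0$. I would then run the regularity-lifting scheme of Lemma~\ref{lm:removablesingularitygidasspruck} adapted to the lower critical scaling: take the test function $\phi(x)=|x|^{\zeta}$ with the exponent $\zeta$ tuned to the lower critical case so that $(-\Delta)^3\phi/\phi=A\,|x|^{-6}$, use the same cutoff $\eta_\varepsilon$ and the same commutator $\mathfrak{T}(\eta_\varepsilon,\phi)$ to derive an integrability gain
\begin{equation*}
\int_{B_{1/2}} u(x)^{n/(n-6)}\,\ud x<+\infty,
\end{equation*}
and then bootstrap via the Hardy--Littlewood--Sobolev inequality and the integral representation from Lemma~\ref{lm:integralrepresentation}(i) to reach $u\in L^q(B_{1/4})$ for every $q<+\infty$, finishing with Morrey's embedding and Schauder estimates to obtain $u\in C^{6,\alpha}$ across the origin. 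This contradicts the standing assumption that $u$ is singular.

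The main obstacle will be this last step: the borderline nature of the lower critical exponent means that the improvement from the hypothesis $\overline{w}\to 0$ over the sharp bound of Lemma~\ref{lm:sharpestimate} is only logarithmic, so the cutoff bookkeeping in the analogue of \eqref{gs2}--\eqref{gs3} is considerably tighter than in the Gidas--Spruck range. Should the bare $\mathrm{o}(1)$ improvement prove insufficient, the fallback is to exploit the asymptotic monotonicity of $\widetilde{\mathcal{P}}_{\rm cyl}$ from Proposition~\ref{lm:lowermonotonicity}: under $\overline{w}\to 0$, every term in $\widetilde{\mathcal{H}}(t,\theta,w)$ collapses in the limit (using Lemma~\ref{lm:estimateangularparts} for the cross-derivative terms and Lemma~\ref{lm:signproperties} for the remainder), forcing $\widetilde{\mathcal{P}}_{\rm cyl}(-\infty,w)=0$; one then checks, via an independent integration-by-parts identity against $w^{(1)}$ on large cylindrical annuli, that this level is inconsistent with a nonremovable singularity. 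Once $\overline{w}(t)\to \widehat{K}_{0}(n)^{(n-6)/6}$ is established, inverting $\tilde{\mathfrak{F}}$ and combining with the angular decay from Remark~\ref{rmk:estimates} yields the stated asymptotic.
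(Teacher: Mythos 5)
Your overall architecture is the same as the paper's: pass to the nonautonomous cylinder, establish the dichotomy $\lim_{t\to-\infty}w\in\{0,\widehat{K}_0(n)^{(n-6)/6}\}$, and exclude the zero limit by showing it would force the singularity to be removable. Your first two steps are sound (the paper routes the dichotomy through the Pohozaev level in Lemma~\ref{lm:limitinglevels}, you invoke Lemma~\ref{lm:signproperties} plus the angular decay of Remark~\ref{rmk:estimates} directly; either works). The problem is the third step, where your proposed mechanism differs from the paper's and, as described, does not close.

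The paper's removability statement for $p=2_{\#}$ (Lemma~\ref{lm:removablesingularityaviles}) is \emph{not} proved by regularity lifting; it is a pointwise barrier argument through the integral representation of Lemma~\ref{lm:integralrepresentation}(i). Your plan to transplant the test-function/regularity-lifting scheme of Lemma~\ref{lm:removablesingularitygidasspruck} is exactly critical at $p=2_{\#}$ and cannot work as stated. The integrability gain you aim for, $\int_{B_{1/2}}u^{n/(n-6)}\,\ud x<+\infty$, i.e. $V:=u^{p-1}\in L^{n/6}$, is \emph{already satisfied by the genuinely non-removable solutions}: if $u\simeq|x|^{6-n}|\ln|x||^{(6-n)/6}$ then $V^{n/6}\simeq|x|^{-n}|\ln|x||^{-n/6}$ and $\int_0 s^{-1}|\ln s|^{-n/6}\,\ud s<+\infty$ since $n\geqslant7$. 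So this integrability carries no information distinguishing removable from non-removable singularities, and the subsequent Hardy--Littlewood--Sobolev bootstrap with a potential merely in $L^{n/6}$ is the borderline case where the contraction/smallness must come from the little-$\mathrm{o}$ hypothesis itself, which your outline never injects. What the paper actually exploits is that the hypothesis \eqref{asymptoticsaviles} gives $u\leqslant\delta|x|^{6-n}$ near the origin for \emph{every} $\delta>0$; feeding this pointwise smallness into the integral equation, the barrier $\varsigma=M|x|^{-\kappa}+\varepsilon|x|^{6-n-\kappa}$ self-improves the singularity to the polynomially weaker rate $u\lesssim|x|^{-\kappa}$, which puts $u^{2_{\#}-1}$ in $L^{q}$ for some $q$ \emph{strictly} greater than $n/6$ and only then lets elliptic regularity finish. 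Your fallback via $\widetilde{\mathcal{P}}_{\rm cyl}(-\infty,w)=0$ does not repair this: the unspecified ``integration-by-parts identity on large annuli'' is precisely the missing argument, and the paper does not rule out the zero Pohozaev level that way either --- it does so through the barrier. To make your proof complete you must replace the regularity-lifting step by a comparison argument of this type (or otherwise extract a strict, non-logarithmic improvement of the decay of $u$ from the little-$\mathrm{o}$ hypothesis before bootstrapping).
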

	
	As in the autonomous case, we use the limiting energy levels $\widetilde{\mathcal{P}}_{\rm cyl}(-\infty ,w)$ to classify the local behavior near the isolated singularity. 
	\begin{lemma}\label{lm:limitinglevels}
		Let $u\in C^{6}(B_R^*)$ be a positive singular solution to \eqref{ourPDE} with $p=2_{\#}$ and $w=\widetilde{\mathfrak{F}}(v)$ be its nonautonomous Emden--Fowler transformation given by \eqref{newcylindrical}. 
		Assume that $-\Delta u\geqslant0$ and $\Delta^2u\geqslant0$.
		Then, $\widetilde{\mathcal{P}}_{\rm cyl}(-\infty,w) \in\{-\ell^*_{\#}, 0\}$, where $-\ell^*_{\#}=\frac{3}{2(n-3)}\widehat{K}_0(n)^{\frac{n-3}{3}}$.
		Moreover, it follows
		\begin{itemize}
			\item[{\rm (i)}] $\widetilde{\mathcal{P}}_{\rm cyl}(-\infty,w)=0$ if and only if,
			\begin{equation}\label{asymptoticsaviles}
				u(x)=\mathrm{o}\left(|x|^{6-n}(\ln |x|)^{\frac{6-n}{6}}\right) \quad {\rm as} \quad x \rightarrow 0.
			\end{equation}
			\item[{\rm (ii)}] $\widetilde{\mathcal{P}}_{\rm cyl}(-\infty,w)=\ell^*_{\#}$ if and only if, \begin{equation*}
				u(x)=(1+\mathrm{o}(1))\widehat{K}_{0}(n)^{\frac{n-6}{6}}|x|^{6-n}(\ln |x|)^{\frac{6-n}{6}} \quad {\rm as} \quad x \rightarrow 0.
			\end{equation*}
		\end{itemize}
	\end{lemma}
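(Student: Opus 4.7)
The plan is to evaluate $\widetilde{\mathcal{P}}_{\rm cyl}(-\infty,w)$ explicitly by passing to the limit $t\to-\infty$ inside the functional, using the asymptotic profile of $w$ furnished by Lemma~\ref{lm:signproperties}, and then translating the two admissible limits of $w$ back to the corresponding asymptotics of $u$ via the inverse of the nonautonomous transformation \eqref{noncyltransform}. First I would invoke Proposition~\ref{lm:lowermonotonicity} to guarantee that $\widetilde{\mathcal{P}}_{\rm cyl}(-\infty,w)=\lim_{t\to-\infty}\int_{\mathbb{S}^{n-1}}\widetilde{\mathcal{H}}(t,\theta,w)\,\ud\theta$ exists, so only its numerical value remains to be identified. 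By Lemma~\ref{lm:signproperties}, $w_\infty:=\lim_{t\to-\infty}w(t,\theta)\in\{0,\widehat{K}_0(n)^{(n-6)/6}\}$ uniformly in $\theta$, and every radial derivative $w^{(j)}$ vanishes in that limit; by the exponential bounds \eqref{angularestimates} all angular derivatives decay like $\mathcal{O}(e^{\beta t})$. Consequently, every term of $\widetilde{\mathcal{H}}_{\rm ang}$ is a product of at most polynomial growth in $t$ with an exponentially small quantity, and therefore vanishes in the limit.

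The technical heart of the proof is to show that every derivative-containing term in $\widetilde{\mathcal{H}}_{\rm rad}$ also vanishes, despite the explicit factor of $t$ each one carries. This is the static counterpart, for $\widetilde{\mathcal{H}}_{\rm rad}$ itself, of what Lemma~\ref{lm:estimateangularparts} established for $\partial_t\widetilde{\mathcal{H}}_{\rm rad}$, and I would mirror the step-by-step strategy used there: for each term of the form $t\widetilde{K}_j\,w^{(i)}w^{(\ell)}$, decompose $t\widetilde{K}_j$ into a piece with finite limit and a linearly growing residue, then handle the first piece directly via Lemma~\ref{lm:signproperties} and the second via a L'H\^{o}pital-type argument. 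This case analysis, involving all the bilinear combinations of radial derivatives weighted by their polynomial coefficients, is where I anticipate the main obstacle, since control of products such as $tw^{(5)}w^{(1)}$ or $tw^{(3)}w^{(2)}$ requires both the decay from Lemma~\ref{lm:signproperties} and a careful book-keeping of the asymptotic expansions of $t\widetilde{K}_j(n,t)$.

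Once these derivative contributions have been eliminated, the only surviving ingredients of $\widetilde{\mathcal{H}}_{\rm rad}$ are $\tfrac{t\widetilde{K}_0}{2}w^2$ and $\tfrac{n-6}{2(n-3)}|w|^{2_{\#}+1}$, producing
\begin{equation*}
\widetilde{\mathcal{P}}_{\rm cyl}(-\infty,w)=\omega_{n-1}\Big(\tfrac{1}{2}\big(\lim_{t\to-\infty}t\widetilde{K}_0(n,t)\big)\,w_\infty^{2}+\tfrac{n-6}{2(n-3)}|w_\infty|^{2_{\#}+1}\Big).
\end{equation*}
Using the algebraic relation for $w_\infty$ extracted from the limiting ODE in the proof of Lemma~\ref{lm:signproperties}, the parenthesis collapses to $0$ when $w_\infty=0$ and to a negative constant proportional to $\widehat{K}_0(n)^{(n-3)/3}$ when $w_\infty=\widehat{K}_0(n)^{(n-6)/6}$; normalizing yields exactly the dichotomy $\widetilde{\mathcal{P}}_{\rm cyl}(-\infty,w)\in\{-\ell^*_{\#},0\}$ claimed in the statement.

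Finally, since the map $w_\infty\mapsto\widetilde{\mathcal{P}}_{\rm cyl}(-\infty,w)$ is injective on its two-point domain, each value of the Pohozaev invariant uniquely determines $w_\infty$. The equivalences (i) and (ii) then follow by inverting \eqref{noncyltransform}: $w_\infty=0$ translates into $u(x)=\mathrm{o}(|x|^{6-n}(\ln|x|)^{(6-n)/6})$ while $w_\infty=\widehat{K}_0(n)^{(n-6)/6}$ translates into the sharp leading-order profile in (ii), the converse implications being immediate by substitution of the prescribed profile into $\widetilde{\mathfrak{F}}(u)$.
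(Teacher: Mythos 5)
Your proposal is correct in outline and follows the same architecture as the paper's proof: pass to the limit in $\widetilde{\mathcal{P}}_{\rm cyl}$, reduce to the two admissible values of $w_0=\lim_{t\to-\infty}w$, and invert $\widetilde{\mathfrak{F}}$. The one genuine divergence is in how the dichotomy $w_0\in\{0,\widehat{K}_0(n)^{\frac{n-6}{6}}\}$ is obtained: you import it wholesale from Lemma~\ref{lm:signproperties}, whereas the paper re-derives it inside the proof by pairing \eqref{ourPDEcylnon} with $w$, integrating over $(-\infty,t_0)\times\mathbb{S}^{n-1}$, and exploiting that $\bigl|\int\frac{1}{t}\int(\widehat{K}_0(n)-|w|^{2_\#-1})|w|^2\bigr|<+\infty$ forces $(\widehat{K}_0(n)-|w_0|^{2_\#-1})|w_0|^2=0$; the paper also spends a line on uniqueness of the subsequential limit (the limiting expression has finitely many nonnegative roots), a point you implicitly absorb into the citation of Lemma~\ref{lm:signproperties}. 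Your route is the more economical one given that the lemma is already on record. On the other side of the ledger, you correctly identify a step the paper glosses over: Lemma~\ref{lm:estimateangularparts} controls $\partial_t\widetilde{\mathcal{H}}_{\rm rad}$, not $\widetilde{\mathcal{H}}_{\rm rad}$ itself, so the vanishing of the $t$-weighted bilinear derivative terms such as $tw^{(5)}w^{(1)}$ and $t\widetilde{K}_1{w^{(1)}}^2$ does require the separate decomposition-plus-L'H\^opital case analysis you sketch; you flag this as the main obstacle without fully executing it, which is an acceptable level of detail for a plan but is exactly where the work lies. Finally, note that your surviving limit carries the coefficient $\tfrac{1}{2}\lim_{t\to-\infty}t\widetilde{K}_0(n,t)$ on $w_\infty^2$, which is the faithful reading of $\widetilde{\mathcal{H}}_{\rm rad}$ and is what reproduces the constant $\tfrac{3}{2(n-3)}\widehat{K}_0(n)^{\frac{n-3}{3}}$ in the statement; the paper's displayed limit drops this factor of $\tfrac{1}{2}$, so your bookkeeping is, if anything, the more reliable of the two.
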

	
	\begin{proof}
		First, combining \eqref{angularestimates} with Proposition~\ref{lm:lowermonotonicity} and Lemma~\ref{lm:estimateangularparts}, we find
		\begin{equation*}
			\widetilde{\mathcal{P}}_{\rm cyl}(-\infty , w)=\lim _{t \rightarrow -\infty } \int_{\mathbb{S}_t^{n-1}}\left(\frac{n-6}{2(n-3)}|w|^{2_{\#}+1}+\widehat{K}_{0}(n)|w|^{2}\right)\ud\theta.
		\end{equation*}
		Furthermore, by \eqref{angularestimates}, we see that for any $\{t_k\}_{k\in\mathbb{N}}$ such that $t_{k} \rightarrow +\infty $ as $k \rightarrow -\infty $, it follows that $\{w(t_{k}, \theta)\}_{k\in\mathbb{N}}$ converges to a limit, which is independent of $\theta \in \mathbb{S}_t^{n-1}$. Hence, up to subsequence, there exists $w_0\in\mathbb{R}$ such that $w(t_{k}, \theta)\rightarrow w_0$ uniformly on $\theta \in \mathbb{S}_t^{n-1}$ , which gives us
		\begin{equation}\label{bestconstant}
			\widetilde{\mathcal{P}}_{\rm cyl}(-\infty , w)=\frac{n-6}{2(n-3)}|w_0|^{2_{\#}+1}+\widehat{K}_{0}(n)|w_0|^{2}.
		\end{equation}
		Thus, since the right-hand side of the last equation has at most three nonnegative roots, the limit $w_0\in\mathbb R$, under the uniform convergence of $w(t, \theta)$ on $\mathbb{S}_t^{n-1}$ as $t \rightarrow -\infty $, is unique.
		Finally, taking the inner product of \eqref{ourPDEcylnon} with $w$, integrating both sides over $(-\infty,t_{0},) \times \mathbb{S}^{n-1}$, and using  \eqref{angularestimates},\eqref{radialestimates} and Lemma~\ref{lm:estimateangularparts}, it follows
		\begin{equation*}
			\left|\int_{-\infty}^{t_{0}} \frac{1}{t} \int_{\mathbb{S}_t^{n-1}}\left(\widehat{K}_0(n)-|w|^{2_{\#}-1}\right) |w|^{2}\ud \theta\ud t\right|<+\infty.
		\end{equation*}
		Now since $\lim_{t \rightarrow -\infty }w(t,\theta)=w_0$ uniformly on $\mathbb{S}_t^{n-1}$, we get either $w_0=0$ or $w_0=\widehat{K}_0(n)^{\frac{n-6}{6}}$, which by substituting into \eqref{bestconstant}, implies that either $\widetilde{\mathcal{P}}_{\rm cyl}(-\infty , w)=0$ if and only if, $w_0=0$, or $\widetilde{\mathcal{P}}_{\rm cyl}(-\infty , w)=\ell^*_{\#}$, otherwise.
		The proof trivially follows by applying the inverse $\widetilde{\mathfrak{F}}^{-1}$ of the nonautonomous cylindrical transform.
	\end{proof}
	
	Now we are left to show that, if Lemma~\ref{lm:limitinglevels} (i)  holds, then the singularity at the origin is removable.
	Here, we are based on the barriers construction in \cite{MR2055032} (see also \cite{arxiv:1901.01678}), which is available due to the integral representation \eqref{integralsystem}.
	
	\begin{lemma}\label{lm:removablesingularityaviles}
		Let $u\in C^{6}(B_R^*)$ be a positive solution to \eqref{ourPDE} with $p=2_{\#}$. 
		Assume that $-\Delta u\geqslant0$ and $\Delta^2u\geqslant0$.
		If
		\begin{equation*}
			u(x)=\mathrm o\left(|x|^{6-n}(\ln |x|)^{\frac{6-n}{6}}\right) \quad {\rm as} \quad x \rightarrow 0,
		\end{equation*}
		then, the origin is a removable singularity.
	\end{lemma}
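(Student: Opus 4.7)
The plan is to exploit the integral representation from Lemma~\ref{lm:integralrepresentation} (i) together with a contraction-type iteration (in the spirit of the barrier construction of \cite{MR2055032} and \cite{arxiv:1901.01678}) to show that $u$ is bounded near the origin, after which standard elliptic bootstrap promotes $u$ to a classical solution in the full ball. Setting $\phi(x):=|x|^{6-n}|\ln|x||^{\frac{6-n}{6}}$, the hypothesis $u=\mathrm{o}(\phi)$ allows us, for every $\varepsilon>0$, to fix $r_\varepsilon$ with $u(y)\leqslant\varepsilon\phi(y)$ on $B_{r_\varepsilon}$. Because $p=2_\#=\frac{n}{n-6}$, this gives the pointwise bound $u(y)^p\leqslant\varepsilon^p|y|^{-n}|\ln|y||^{-n/6}$ on that ball, the precise decay needed for the subsequent convolution estimates.

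The key step is to estimate the convolution $\int_{B_{r_0}}|x-y|^{6-n}u(y)^p\,\ud y$ by splitting into the four regions $\{|y|<|x|/2\}$, $\{|x|/2\leqslant|y|\leqslant 2|x|\}$, $\{2|x|<|y|<r_\varepsilon\}$, and $\{r_\varepsilon\leqslant|y|<r_0\}$. Using $\int_0^{|x|}r^{-1}|\ln r|^{-n/6}\,\ud r\sim|\ln|x||^{\frac{6-n}{6}}$ (convergent thanks to $n/6>1$ for $n\geqslant 7$), the scaling substitution $r=|x|s$ on the intermediate annuli, and the Harnack-type estimate of Lemma~\ref{lm:gradientestimates} to control the borderline piece, one obtains
\[
\int_{B_{r_0}}|x-y|^{6-n}u(y)^p\,\ud y\leqslant C\varepsilon^{p}\phi(x)+M(\varepsilon) \quad {\rm for} \quad |x|<r_\varepsilon/2,
\]
where $M(\varepsilon)$ denotes the bounded contribution from $\{|y|\geqslant r_\varepsilon\}$. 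Combined with the harmonic remainder $\psi\in L^{\infty}_{\mathrm{loc}}$, this yields $u(x)\leqslant C\varepsilon^{p}\phi(x)+M'$ on $B_{r_\varepsilon/2}$.

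Iterating this estimate, the prefactor on $\phi$ obeys the recursion $\varepsilon_{k+1}\leqslant C\varepsilon_k^{p}$; since $p>1$ and $\varepsilon_0$ can be chosen so that $C\varepsilon_0^{p-1}<1$, the sequence $\varepsilon_k$ tends to zero, while the additive remainders $M_k$ stay uniformly bounded (at each iteration the bounded piece contributes only a bounded convolution of bounded data). Passing to the limit gives $u\in L^{\infty}(B_{r_\varepsilon/4})$; then $u^p\in L^{\infty}$ near the origin, and standard Schauder bootstrap applied to $(-\Delta)^3u=u^p$ promotes $u$ to $C^{6,\alpha}(B_{r_\varepsilon/4})$, so that the origin is a removable singularity. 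The main obstacle will be the careful bookkeeping in the four-region decomposition---in particular the annular piece $\{|x|/2\leqslant|y|\leqslant 2|x|\}$, where $|y|\sim|x|$ makes both $u^p$ and $|x-y|^{6-n}$ simultaneously singular and the naive bound yields only the factor $\phi(x)/|\ln|x||$---and verifying that the additive remainders $M_k$ do not accumulate to infinity through the iteration, which is the quantitative step on which the contraction ultimately rests.
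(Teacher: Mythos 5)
There is a genuine gap in the iteration step, and it sits exactly where you flag it but then wave it through. Your convolution estimate $\int_{B_{r_0}}|x-y|^{6-n}u(y)^{p}\,\ud y\leqslant C\varepsilon^{p}\phi(x)+M(\varepsilon)$ is correct in spirit (the annular piece you worry about actually contributes only $\varepsilon^{p}\phi(x)|\ln|x||^{-1}$, which is harmless), but it shows that the convolution maps the profile $\phi$ back to the \emph{same} profile $\phi$ with a power-improved prefactor: the iteration can never upgrade the decay rate, only shrink the constant in front of $\phi$. Hence boundedness of $u$ can only be extracted in the limit $k\rightarrow\infty$ from $u\leqslant\varepsilon_{k}\phi+M_{k}$, and everything hinges on the additive remainders $M_k$ staying bounded on a \emph{fixed} ball. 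Your parenthetical justification --- ``the bounded piece contributes only a bounded convolution of bounded data'' --- is not correct as stated: feeding $u\leqslant\varepsilon_k\phi+M_k$ back into the nonlinearity produces $u^{p}\lesssim\varepsilon_k^{p}\phi^{p}+M_k^{p}$, so the recursion is of the form $M_{k+1}\leqslant C M_k^{p}+B$ with $p=2_{\#}>1$. Such a recursion diverges doubly exponentially unless $C M_0^{p-1}$ and $B$ are small, and the constant $B$ (coming from $\psi$ and from the shell $\{|y|\geqslant r_\varepsilon\}$, where $u^{p}\sim|y|^{-n}|\ln|y||^{-n/6}$) grows like $r_\varepsilon^{6-n}$ as the inner radius shrinks --- precisely the borderline scaling $\rho^{6}\cdot(\rho^{6-n})^{p-1}\sim 1$ at the Serrin exponent. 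So the contraction does not close as written; this is not bookkeeping to be checked later but the actual mathematical content of the lemma.

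The paper avoids this entirely by linearizing rather than iterating: it writes the equation as $(-\Delta)^3u=Vu$ with $V=u^{2_{\#}-1}\leqslant\delta^{2_{\#}-1}|y|^{-6}$ (which follows from $u\leqslant\delta|y|^{6-n}$, itself a consequence of your hypothesis), and runs a \emph{single} comparison argument against the two-profile barrier $\varsigma(x)=M|x|^{-\kappa}+\varepsilon|x|^{6-n-\kappa}$, $\kappa\in(0,n-6)$. The key point is that $\int|x-y|^{6-n}|y|^{-6}|y|^{-\kappa}\,\ud y\leqslant C|x|^{-\kappa}$ (and likewise for the exponent $6-n-\kappa$), so the \emph{linear} integral operator is a contraction with factor $C\delta^{2_{\#}-1}<\tfrac12$ on the cone spanned by these two profiles; a sliding argument then yields $u\leqslant\varsigma$, hence (letting $\varepsilon\rightarrow0$) $u\lesssim|x|^{-\kappa}$, so $u^{2_{\#}-1}\in L^{q}$ for some $q>n/6$ and elliptic regularity finishes. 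If you want to keep your route, you would need either to make the smallness of the coefficient in front of $M_k^{p}$ beat the growth of $B$ (which is borderline at $p=2_{\#}$), or to first improve the profile from $\phi$ to $|x|^{-\kappa}$ by a linear-in-$u$ argument of the paper's type --- at which point you have reproduced the paper's proof.
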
 
	
	\begin{proof}
		For any $\delta>0$, we choose $0<\rho\ll 1$ such that $u(x)\leqslant\delta|x|^{-\gamma_p}$ in $B^*_{\rho}$.
		Fixing $\varepsilon>0$, $\kappa\in\left(0, \gamma_p\right)$ and $M\gg1$ to be chosen later, we define
		\begin{equation*}
			\varsigma(x)=
			\begin{cases}
				{M|x|^{-\kappa}+\varepsilon|x|^{6-n-\kappa},} & \mbox{if} \ {0<|x|<\rho},\\ 
				{u(x)},& \mbox{if} \ {\rho<|x|<2}.
			\end{cases}
		\end{equation*}
		Notice that for every $0<\kappa<n-6$ and $0<|x|<2$, there exists $C>0$ such that
		\begin{align*}
			\int_{\mathbb{R}^{n}}{|x-y|^{6-n}|y|^{-6-\kappa}}\ud y 
			=|x|^{6-n}\int_{\mathbb{R}^{n}}{\left||x|^{-1}x-|x|^{-1}y\right|^{6-n}|y|^{\-\kappa-6}}\ud y\leqslant C\left(\frac{1}{n-6-\kappa}+\frac{1}{\kappa}+1\right)|x|^{-\kappa},
		\end{align*}
		which, for $0<|x|<2$ and $0<\delta\ll1$, yields 
		\begin{align*}
			\int_{B_{\rho}}{u^{2_{\#}-1}(y)\varsigma(y)}{|x-y|^{6-n}}\ud y
			\leqslant \delta^{2_{\#}-1} \int_{\mathbb{R}^{n}}{\varsigma(y)}{|x-y|^{n-6}|y|^{-6}}\ud y\leqslant C\delta^{2_{\#}-1} \varsigma(x)<\frac{1}{2} \varsigma(x).
		\end{align*}
		Moreover, for $0<|x|<\rho$ and $\bar{x}=\rho x|x|^{-1}$, we get
		\begin{align*}
			\int_{B_{2}\setminus B_{\rho}}{u^{2_{\#}-1}(y)\varsigma(y)}{|x-y|^{6-n}}\ud y
			=\int_{B_{2}\setminus B_{\rho}}\frac{|\bar{x}-y|^{n-6}}{|x-y|^{n-6}}\frac{u^{2_{\#}}(y)}{|\bar{x}-y|^{n-6}}\ud y\leqslant  2^{n-6}\max_{\partial B_{\rho}} u.
		\end{align*}
		The last inequality implies that for $0<|x|<\tau$ and $M\geqslant\max_{\partial B_{\rho}}u$,
		\begin{equation*}
			\psi(x)+\int_{B_{2}}\frac{u^{2_{\#}-1}(y)\varsigma(y)}{|x-y|^{6-n}}\ud y \leqslant \psi(x)+2^{n-6}\max_{\partial B_{\rho}}u+\frac{1}{2}\varsigma(x)<\varsigma(x).
		\end{equation*}
		
		We show that $\varsigma$ can be taken as a barrier for any $u$. 
		Namely, we claim that $u(x)\leqslant \varsigma(x)$ in $B^*_{\rho}$. In fact, suppose by contradiction that the conclusion is not true. Then, since $u(x)\leqslant\delta|x|^{-\gamma_p}$ in $B^*_{\rho}$, by the definition of $\varsigma$, there exists $\widetilde{\tau} \in(0, \rho)$, depending on $\varepsilon$, such that $\varsigma\geqslant u$ in $B^*_{{\rho}}$ and $\varsigma>u$ close to the boundary $\partial B_{\rho}$. 
		Let us consider $\bar{\tau}:=\inf\{\tau>1 : \tau\psi>u \ \mbox{in} \ B^*_{\rho}\}$.
		Then, we have that $\bar{\tau}\in(1,+\infty)$ and there exists $\bar{x}\in B_{\rho} \setminus\bar{B}_{\widetilde{\tau}}$ such that $\bar{\tau}\varsigma(\bar{x})=u(\bar{x})$ and, for    $0<|x|<\tau$, it follows 
		\begin{equation*}
			\bar{\tau}\varsigma(x)\geqslant\int_{B_{2}}{u^{2_{\#}-1}(y)\bar{\tau} \varsigma(y)}{|x-y|^{6-n}}\ud y+\bar{\tau}\psi(x)\geqslant\int_{B_{2}}{u^{2_{\#}-1}(y)\bar{\tau}\varsigma(y)}{|x-y|^{6-n}}\ud y+\psi(x),
		\end{equation*}
		which gives us
		\begin{equation*}
			\bar{\tau}\varsigma(x)-u(x)\geqslant\int_{B_{2}}{u^{2_{\#}-1}(y)(\bar{\tau} \varsigma(y)-u(y))}{|x-y|^{6-n}}\ud y.
		\end{equation*}
		Finally, by evaluating the last inequality at $\bar{x}\in B_{\rho}\setminus\bar{B}_{\widetilde{\tau}}$, we get a contradiction.
		
		At last, we find
		$u(x)\leqslant\varsigma(x)\leqslant M|x|^{-\kappa}+\varepsilon|x|^{6-n-\kappa}$ in $B^*_{\rho}$,
		which yields that $u^{2_{\#}-1}\in L^{p}(B^*_{\rho})$ for some $p>{n}/{6}$. 
		Hence, standard elliptic regularity concludes the proof of the lemma.
	\end{proof}
	
	Ultimately, the proof of the main result in this section is merely a consequence of the last results.
	
	\begin{proof}[Proof of Proposition~\ref{prop:avilescase}]
		Suppose that $u\in C^6(\mathbb R^n\setminus\{0\})$ is a positive singular solution to \eqref{ourPDE} with $p=2_{\#}$, then by Lemma~\ref{lm:removablesingularityaviles}, $u$ does not satisfy \eqref{asymptoticsaviles}. Therefore, the proof follows as a consequence of Lemma~\ref{lm:limitinglevels}.
	\end{proof}
	
	\begin{acknowledgement}
		This paper was finished when the first-named author held a Post-doctoral position at the University of British Columbia, whose hospitality he would like to acknowledge.
		He also wishes to express gratitude to Professor Jo\~ao Marcos do \'O for his constant support and several valuable conversations.
	\end{acknowledgement}
	
	
	\appendix
	
	\section{Tri-Laplacian coefficients}
	In this appendix, we show the explicit formula for the coefficients of the tri-Laplacian in autonomous and nonautonomous Emden--Fowler coordinates, respectively.
	
	\subsection{Polar coordinates}
	We consider the cylinder $\mathcal{C}_R:=(0,R)\times\mathbb{S}^{n-1}$ and $(-\Delta)^3_{\rm sph}$ the tri-Laplacian written in spherical (polar) coordinates given by
	\begin{align*}
		\Delta^3_{\rm sph}&={r^{-6}}\partial_r^{(6)}+M_5(n,r)\partial_r^{(5)}+M_4(n,r)\partial_r^{(4)}+ M_3(n,r)\partial_r^{(3)}+M_2(n,r)\partial_r^{(2)}+M_1(n,r)\partial_r&\\\nonumber
		&+2r^{-2}\partial^{(4)}_r\Delta_{\sigma}+N_3(n,r)\partial^{(3)}_r\Delta_{\sigma}+N_2(n,r)\partial^{(2)}_r\Delta_{\sigma}+N_1(n,r)\partial_r\Delta_{\sigma}+N_0(n,r)\Delta_{\sigma}\\
		&+3r^{-4}\partial^{(2)}_r\Delta^2_{\sigma}+O_1(n,r)\partial_r\Delta^2_{\sigma}+O_0(n,r)\Delta^2_{\sigma}+{r^{-6}}\Delta^3_{\sigma},
		&
	\end{align*}
	where $\Delta_{\sigma}$ denotes the Laplace--Beltrami operator in $\mathbb{S}^{n-1}$ and
	\begin{align}\label{laplaciancoefficients1}
		\nonumber 
		M_{5}(n,r)&=3(n-1)r^{-1}\\\nonumber
		M_{4}(n,r)&=3(n-1)(n-3)r^{-2}\\
		M_{3}(n,r)&=(n-1)(n-3)(n-8)r^{-3}\\\nonumber
		M_{2}(n,r)&=-3(n-1)(n-3)(n-5)r^{-4}\\\nonumber
		M_{1}(n,r)&=3(n-1)(n-3)(n-5)r^{-5}\\\nonumber
	\end{align}
	and
	\begin{align}\label{laplaciancoefficients2}
		\nonumber 
		N_{3}(n,r)&=2(n-7)r^{-3}\\\nonumber
		N_{2}(n,r)&=2(n^2-n-3)r^{-4}\\\nonumber
		N_{1}(n,r)&=6(7n-23)r^{-5}\\
		N_{0}(n,r)&=8(n-1)(n-5)r^{-6}\\\nonumber
		O_{1}(n,r)&=3(n-5)r^{-5}\\\nonumber
		O_{0}(n,r)&=-2(3n-16)r^{-6}\\\nonumber
	\end{align}
	are its coefficients in this coordinate system.
	
	\subsection{Autonomous case}
	Let us recall the sixth order autonomous Emden--Fowler change of variables $($or cylindrical logarithm coordinates$)$ given by
	\begin{equation*}
		v(t,\theta)=r^{\gamma_p}u(r,\sigma), \quad \mbox{where} \quad t=\ln r \quad {\rm and} \quad \sigma=\theta=x|x|^{-1}.
	\end{equation*}  
	Using this change of variables and performing a lengthy computation, we arrive at the following sixth order operator on the cylinder,
	\begin{align*}
		P^3_{\rm cyl}&=\partial_t^{(6)}+K_{5}(n,p)\partial_t^{(5)}+K_{4}(n,p)\partial_t^{(4)}+K_{3}(n,p)\partial_t^{(3)}+K_{2}(n,p)\partial_t^{(2)}+K_{1}(n,p)\partial_t+K_{0}(n,p)&\\\nonumber
		&+2\partial^{(4)}_t\Delta_{\theta}+J_3(n,p)\partial^{(3)}_t\Delta_{\theta}+J_2(n,p)\partial^{(2)}_t\Delta_{\theta}+J_1(n,p)\partial_t\Delta_{\theta}+J_0(n,p)\Delta_{\theta}\\
		&+3\partial^{(2)}_t\Delta^2_{\theta}+L_1(n,p)\partial_t\Delta^2_{\theta}+L_0(n,p)\Delta^2_{\theta}+\Delta^3_{\theta},&
	\end{align*}
	where
	\begin{align}\label{autonomouscoefficients1}
		\nonumber K_{0}(n,p)&=-24(p-1)^{-6}(p+2) (2 p+1) (n p- 6 p-n ) (n p- 4 p -2 - n) (n p- 2 p - n-4)&\\\nonumber 
		K_{1}(n,p)&=4(p-1)^{-5}(n p- 6 p- n-6) (2 n^2 p^4- 12 n p^4+ 16 p^4+ 10 n^2 p^3- 108 n p^3+ 224 p^3\\
		&-15 n^2 p^2- 36 n p^2+ 492 p^2- 8 n^2 p+ 120 n p+ 224 p+ 11 n^2+ 36 n +16)&\\\nonumber
		K_{2}(n,p)&=-2(p-1)^{-4} (3 n^3p^4- 48 n^2 p^4+ 228 n p^4- 320 p^4- 3 n^3 p^3- 78 n^2 p^3+ 996 n p^3- 2392 p^3- 9 n^3 p^2\\\nonumber
		&+ 198 n^2 p^2+ 180 n p^2- 4296 p^2+ 15 n^3 p+ 30 n^2 p- 1020 n p- 2392 p- 6 n^3- 102 n^2- 384 n -320)&\\\nonumber
		K_{3}(n,p)&=-(p-1)^{-3}(n p- 6 p-6 - n) (n^2 p^2- 24 n p^2+ 68 p^2- 2 n^2 p- 12 n p+ 224 p+ n^2+ 36 n+ 68)&\\\nonumber
		K_{4}(n,p)&=(p-1)^{-2}(3 n^2 p^2- 42 n p^2+ 124 p^2- 6 n^2 p- 6 n p+ 292 p+ 3 n^2+ 48 n+124)&\\\nonumber
		K_{5}(n,p)&=3(p-1)^{-1} (n p- 6 p-6 - n)&
	\end{align}
	and
	\begin{align}\label{autonomouscoefficients2}
		\nonumber
		J_{0}(n,p)&=4(p-1)^{-4} (2 n^2 p^4- 12 n p^4+ 10 p^4- 5 n^2 p^3- 24 n p^3+ 218 p^3+ 21 n^2 p^2+ 72 n p^2- 192 p^2&\\\nonumber
		&- 35 n^2 p- 132 n p+ 1094 p+ 17 n^2+ 96 n-482)&\\\nonumber
		J_{1}(n,p)&=-2(p-1)^{-3}(n^2 p^3- 24 n p^3+ 86 p^3+ 9 n^2 p^2+ 24 n p^2+ 90 p^2- 21 n^2 p- 84 n p+ 966 p &\\\nonumber
		&+ 11 n^2+ 84 n-278)&\\
		J_{2}(n,p)&=2(p-1)^{-2}(n^2 p^2- 4 n p^2+ 29 p^2- 2 n^2 p- 10 n p+ 176 p+ n^2+ 14 n+11)&\\\nonumber
		J_{3}(n,p)&=2(p-1)^{-1} (n p - 13 p - n-11)&\\\nonumber
		L_{0}(n,p)&=-(p-1)^{-2}(3 n p^2 16 p^2-3 n p+22 p +6 n+16    )&\\\nonumber
		L_{1}(n,p)&=3(p-1)^{-1} (n p- 6 p -6 - n ).&
	\end{align}
	
	\subsection{Nonautonomous case}
	Let us recall the sixth order nonautonomous Emden--Fowler change of variables $($or cylindrical logarithm coordinates$)$ given by
	\begin{equation*}
		w(t,\theta)=r^{6-n}(\ln r)^{\frac{6-n}{6}}u(r,\sigma), \quad {\rm where} \quad t=\ln r \quad {\rm and} \quad \sigma=\theta=x|x|^{-1}.
	\end{equation*}
	Using this coordinate system and performing a lengthy computation, we arrive at the following sixth order nonautonomous operator PDE on the cylinder
	\begin{align*}
		\widetilde{P}^3_{\rm cyl}&=\partial_t^{(6)}+\widetilde{K}_{5}(n,t)\partial_t^{(5)}+\widetilde{K}_{4}(n,t)\partial_t^{(4)}+\widetilde{K}_{3}(n,t)\partial_t^{(3)}+\widetilde{K}_{2}(n,t)\partial_t^{(2)}+\widetilde{K}_{1}(n,p)\partial_t+\widetilde{K}_{0}(n,t)&\\\nonumber
		&+2\partial^{(4)}_t\Delta_{\theta}+\widetilde{J}_3(n,t)\partial^{(3)}_t\Delta_{\theta}+\widetilde{J}_2(n,t)\partial^{(2)}_t\Delta_{\theta}+\widetilde{J}_1(n,t)\partial_t\Delta_{\theta}+\widetilde{J}_0(n,t)\Delta_{\theta}\\
		&+3\partial^{(2)}_t\Delta^2_{\theta}+\widetilde{L}_1(n,t)\partial_t\Delta^2_{\theta}+\widetilde{L}_0(n,t)\Delta^2_{\theta}+\Delta^3_{\theta},&
	\end{align*}
	where
	\begin{align}\label{coeficcientnonautonomous1}
		\nonumber
		\widetilde{K}_{0}(n,t)&=\frac{4(n-6) (n^3-12n^2+44n-48)}{3t}-\frac{(n-6) n (3n^3-48n^2+228n-320)}{18t^{2}}&\\\nonumber
		&+\frac{(n-6) n (n^4-24n^3+32n^2+864n-2448)}{216t^{3}}+ \frac{(n-6) n (3n^4+12n^3-416n^2-792n+8928)}{1296t^{4}} &\\\nonumber 
		&+ \frac{(n-6) n (n^4+30n^3+180n^2-1080n-7776)}{2592t^{5}}+\frac{(n-6) n (n^4+60n^3+1260n^2+10800n+31104)}{46656t^{6}}&\\\nonumber
		\widetilde{K}_{1}(n,t)&=-8(n^3- 12 n^2+ 44 n-48)
		+\frac{2(n^4-66n^3+516n^2-1688n^3+1920)}{3t}&\\
		&- \frac{(n-6)^2 n (n^2-24n+68)}{12t^2}-\frac{n(3n^4-42 n^3+16n^2+1512n-4464)}{54t^3} &\\\nonumber
		&- \frac{5 (n-6)^2 n (n^2+ 18 n + 72 )}{432t^4}-\frac{n (n^4+30 n^3 +  180 n^2- 1080 n-7776 )}{1296 t^5}&\\\nonumber
		\widetilde{K}_{2}(n,t)&=-(6n^3-96n^2+456n-640)+ \frac{(n-6)^2 (n^2-24n+68)}{2t}&\\\nonumber
		&+\frac{n (3n^3-60n^2+376n-744)}{6t^2}+\frac{5(n-6)^2 n (n+6)}{36t^3}+ \frac{5n(n^3+12n^2-36n-432)}{432t^4}&\\\nonumber
		\widetilde{K}_{3}(n,t)&= -(n^3-30n^2+212n-408)-\frac{(6n^3-120n^2+752n-3496)}{3t}  - \frac{5 (n-6)^2 n}{6t^2}-\frac{5n(n^2-36)}{54t^3}&\\\nonumber
		\widetilde{K}_{4}(n,t)&=(3n^2-42n+124)+\frac{5(n-6)^2}{2t}+\frac{5n(n-6)}{12t^2} &\\\nonumber
		\widetilde{K}_{5}(n,t)&=- 3 (n-6)+\frac{n-6}{t}&
	\end{align}
	and
	\begin{align}\label{coeficcientnonautonomous2}
		\nonumber
		\widetilde{J}_{0}(n,t)&=2 (n^4- 8 n^3- 39 n^2+ 470 n-964)+	\frac{(3 n^4 - 34 n^3 +34 n^2+ 650 n -1668)}{3t}&\\\nonumber
		&+ \frac{n (4 n^3 - 43 n^2+ 125 n -66)}{18t^2}+ \frac{n (n^3- 11 n^2- 108 n +396)}{108t^3} &\\\nonumber
		&+\frac{n (n^3+ 12 n^2- 36 n -432)}{648 t^4}&\\\nonumber
		\widetilde{J}_{1}(n,t)&=-(6 n^3-32 n^2-124 n+556)+
		\frac{2(4 n^3- 43 n^2 + 125 n -66 )}{3t}&\\\nonumber
		&- \frac{n(3 n^2- 29 n + 66 )}{6 t^2}-\frac{n( n^2-36)}{27t^3}&\\
		\widetilde{J}_{2}(n,t)&=(8 n^2- 38 n + 22 )+\frac{(3 n^2- 29 n + 66 )}{t} + \frac{n(n-6)}{3t^2} &\\\nonumber
		\widetilde{J}_{3}(n,t)&=-\frac{4(n-6)}{3}-(6n-22)t&\\\nonumber
		\widetilde{L}_{0}(n,t)&=-\frac{(72n-384)}{12}+\frac{(n-6)^2}{2t} + \frac{n(n-6)}{12 t^2}&\\\nonumber
		\widetilde{L}_{1}(n,t)&=- 3 (n-6)-\frac{n-6}{t}. &
	\end{align}
	
	Here, the real-valued functions $\mathfrak{p}_j(n,\cdot):(-\infty,\ln R)\rightarrow\mathbb R$ for $j=0,1,2,3,4,5$ are given by
	\begin{align}\label{coefficientspohozaev}
		\nonumber
		\mathfrak{p}_5(n,t)&:=-\frac{3}{2}(n-6),&\\	\nonumber
		\mathfrak{p}_4(n,t)&:=-\frac{5}{2}{n(n-6)}t^{-2}+\frac{1}{2}(3 n^2-42n+124) ,&\\\nonumber
		\mathfrak{p}_3(n,t)&:=\frac{5}{12}n(n^2-36)t^{-3}+\frac{5}{12}n(n-6)t^{-2}-\frac{1}{2}(n^3-30n^2+212n-408),&\\	\nonumber
		\mathfrak{p}_2(n,t)&:=-\frac{5}{288}n(n-6)t^{-4}
		-\frac{5}{36}(n-6)^2n(n+6)t^{-3}-\frac{1}{12}n(3n^3-60n^2+376n-744)t^{-2}\\\nonumber
		&-(3n^3-48n^2+228n-320),\\
		\mathfrak{p}_1(n,t)&:=\frac{(n-6)}{864}\left[(64512 - 44928 n + 10368 n^2 - 864 n^3)t^{-5}\right.&
		\\\nonumber
		&+(553392 n + 170532 n^2 + 15732 n^3 + 447 n^4)t^{-4}+(9744 n - 2228 n^2 - 324 n^3 + 53 n^4)t^{-3}&\\\nonumber
		&+(-8736 n + 6608 n^2 - 1272 n^3 + 60 n^4)t^{-2}+(-14688 n + 7632 n^2 - 1080 n^3 + 36 n^4)t^{-1}&\\\nonumber
		&\left.-(864n^3-10368n^2+44928n-64512)+(3456n^2-20736n+27648)t\right],&\\\nonumber
		\mathfrak{p}_0(n,t)&:=-\frac{5}{93312}n(n-6)(n^4+60n^3+1260n^2+10800n+31104)t^{-6}&\\\nonumber
		&+\frac{1}{1296}n(n-6)(n^4+30n^3+180n^2-1080n-7776)t^{-5}&\\\nonumber
		&+\frac{5}{864}n(n-6)(3n^4+12n^3-416n^2-792n+8928)t^{-4}&\\\nonumber
		&+\frac{5}{216}n(n-6)(n^4-24n^3+32n^2+864n-2448)t^{-3}+\frac{1}{36}(3n^3-48n^2+228n-320)t^{-2}.&
	\end{align}
	
	\subsection{Upper critical case}
	When $p=2^{\#}-1$, it follows 
	\begin{align}\label{coeficcientsupper}
		&\nonumber K^{\#}_{0}(n)=-2^{-8}(n-6)^2(n-2)^2(n+2)^2&\\\nonumber 
		&K^{\#}_{1}(n)=0&\\\nonumber
		&K^{\#}_{2}(n)=2^{-4}(3n^4-24n^3+72n^2-96n+304)&\\\nonumber
		&K^{\#}_{3}(n)=0&\\\nonumber
		&K^{\#}_{4}(n)=-2^{-2}(3n^2-12n+44)&\\\nonumber
		&K^{\#}_{5}(n)=0&\\
		&J^{\#}_{0}(n)=2^{-3}(3n^4-18n^3-192n^2+1864n-3952)&\\\nonumber
		&J^{\#}_{1}(n)=-2^{-1}(3n^3+3n^2-244n+620)&\\\nonumber
		&J^{\#}_{2}(n)=2 n^2+13n-68&\\\nonumber
		&J^{\#}_{3}(n)=-2 (n+1)&\\\nonumber
		&L^{\#}_{0}(n)=-2^{-2}(3 n^2-12n-20)&\\\nonumber
		&L^{\#}_{1}(n)=0.&\\\nonumber
	\end{align}
	
	\subsection{Lower critical case}
	When $p=2_{\#}$, it follows
	\begin{align}\label{coeficcientslower}
		&\nonumber K_{0,\#}(n)=0&\\\nonumber 
		&K_{1,\#}(n)=-8 (n-6) (n-4) (n-2)&\\\nonumber
		&K_{2,\#}(n)=-2 (3n^3-48n^2+228n-320)&\\\nonumber
		&K_{3,\#}(n)=-(n-6) (n^2-24n+68)&\\\nonumber
		&K_{4,\#}(n)=3 n^2-42n+124&\\\nonumber
		&K_{5,\#}(n)=-3 (n-6)&\\
		&J_{0,\#}(n)=2 (n^4-8n^3-39n^2+470n-964)&\\\nonumber
		&J_{1,\#}(n)=-2 (3 n^3-16n^2-62n+278)&\\\nonumber
		&J_{2,\#}(n)=2 (4 n^2-19n+11)&\\\nonumber
		&J_{3,\#}(n)=-2 (3 n-11)&\\\nonumber
		&L_{0,\#}(n)=-2 (3 n-16)&\\\nonumber
		&L_{1,\#}(n)=-3 (n-6).
		&\\\nonumber
	\end{align}
	

\end{document}